\def\BibTeX{{\rm B\kern-.05em{\sc i\kern-.025em b}\kern-.08em
    T\kern-.1667em\lower.7ex\hbox{E}\kern-.125emX}}
\numberwithin{equation}{section}
\numberwithin{figure}{section}
\newtheorem{satz}{Satz}[section]
\newtheorem{Theorem}[satz]{Theorem}
\newtheorem{Proposition}[satz]{Proposition}
\newtheorem{Corollary}[satz]{Corollary}
\newtheorem{lem}[satz]{Lemma}
\newtheorem{Lemma}[satz]{Lemma}
\newtheorem{Remark}[satz]{Remark}
\newtheorem{Example}[satz]{Example}
\renewcommand{\Re}{\operatorname{Re}}
\renewcommand{\Im}{\operatorname{Im}}
\newcommand{\R}{\mathbb{R}}
\newcommand{\E}{\mathbb{E}}
\renewcommand{\P}{\mathbb{P}}
\newcommand{\e}{\varepsilon}
\begin{document}

\title[On the anisotropic stable JCIR process]{On the anisotropic stable JCIR process}

\author{Martin Friesen}
\address[Martin Friesen]{Faculty for Mathematics and Natural Sciences, University of Wuppertal, Germany}
\email{friesen@math.uni-wuppertal.de}

\author[Peng Jin]{Peng Jin\textsuperscript{*}}
\thanks{\textsuperscript{*}Peng Jin is supported by the STU Scientific Research Foundation for Talents (No. NTF18023).}
\address[Peng Jin]{Department of Mathematics, Shantou University, Shantou, Guangdong 515063, China}
\email{pjin@stu.edu.cn}

\date{\today}

\subjclass[2010]{Primary 60H10, 60J25; Secondary 60J35, 37A25}

\keywords{stable JCIR process; affine process; heat kernel; anisotropic Besov space; strong Feller property; exponential ergodicity}

\begin{abstract}
 We investigate the anisotropic stable JCIR process
 which is a multi-dimensional extension of the stable JCIR process
 but also a multi-dimensional analogue of the classical JCIR process.
 We prove that the heat kernel of the anisotropic stable JCIR process exists and it satisfies an a-priori bound
 in a weighted anisotropic Besov norm. Based on this regularity result we deduce the strong Feller property and prove, for the subcritical case, exponential ergodicity in total variation. Also, we show that in the one-dimensional case the corresponding heat kernel is smooth.
\end{abstract}

\maketitle

\allowdisplaybreaks

\section{Introduction}
The classical JCIR process is a commonly used building block for different models in mathematical finance, see \cite{A15}.
For given $b, \sigma \geq 0$ and $\beta \in \R$ it is
obtained as the unique $\R_+$-valued strong solution to
\[
 \mathrm{d}X^x(t) = (b + \beta X(t))\mathrm{d}t + \sqrt{\sigma X(t)} \mathrm{d}B(t) + \mathrm{d}J(t), \qquad X^x(0) = x \geq 0,
\]
where $(B(t))_{t \geq 0}$ is a one-dimensional Brownian motion and
$(J(t))_{t \geq 0}$ is a L\'evy subordinator on $\R_+$ that is independent of the Brownian motion. For a particular choice of subordinator $(J(t))_{t \geq 0}$ such a process was first introduced in \cite{DG01}. Some of its specific properties were  studied in \cite{JKR17b}, see also the references therein.
Replacing the Brownian motion $(B(t))_{t \geq 0}$
by a spectrally positive $\alpha$-stable L\'evy process $(Z^{\alpha}(t))_{t \geq 0}$ whose symbol is given, for $\alpha \in (1,2)$, by
\begin{align}\label{alpha stable symbol}
\Psi_{\alpha}(\xi) = \int_{0}^{\infty}\left( \mathrm{e}^{\mathrm{i}\xi z} - 1 - \mathrm{i}\xi z \right)\mu_{\alpha}(\mathrm{d}z),
\qquad  \mu_{\alpha}(\mathrm{d}z) = \mathbbm{1}_{\R_+}(z)\frac{1}{c(\alpha)} \frac{\mathrm{d}z}{z^{1 + \alpha}},
\end{align}
and replacing the square-root by $\sqrt[\alpha]{\cdot}$
one obtains the stable JCIR process
\begin{align}\label{stable CIR}
 \mathrm{d}X^x(t) = (b + \beta X^x(t))\mathrm{d}t +  \sqrt[\alpha]{\sigma X^x(t)} \mathrm{d}Z^{\alpha}(t)
 + \mathrm{d}J(t), \ \ X^x(0) = x \geq 0.
\end{align}
Note that the normalization constant
$c(\alpha) = \int_0^{\infty}\left(e^{ - z} -1 + z\right)z^{-1- \alpha}\mathrm{d}z$
is chosen in such a way that $\Psi_{\alpha}(\mathrm{i}\xi) = \xi^{\alpha}$ for $\xi \geq 0$. This process is a special case of the short-rate models used in  \cite{JMS17, JMSZ18, CLP18a}. One important advantage of these models is their analytical tractability as many desired expressions (e.g. the Laplace transform) can be computed explicitly.

In this work we study the \textit{anisotropic stable JCIR process},
i.e., the multi-dimensional analogue of the stable JCIR process \eqref{stable CIR}, obtained as the unique $\mathbb{R}_{+}^{m}$-valued strong solution to the system of stochastic equations
\begin{equation}\label{eq: a roots model}
    \mathrm{d}X^x_k(t) = \left( b_k + \sum_{j=1}^{m}\beta_{kj}X^x_j(t)\right)\mathrm{d}t + \sqrt[\alpha_k]{\sigma_k X^x_k(t)}\mathrm{d}Z_k(t) + \mathrm{d}J_k(t),
\end{equation}
where $k \in \{1,\dots, m\}$, $X^x(0) = x \in \R_+^m$,
$b = (b_1,\dots, b_m), (\sigma_1,\dots, \sigma_m) \in \R_+^m$
and $\beta = (\beta_{jk})_{j,k \in \{1,\dots, m\}}$
is such that $\beta_{jk} \geq 0$ for all $j \neq k$.
Here $Z_{1},\ldots,Z_{m}$ are independent and each $Z_k$, $k=1,\ldots,m$, is a one-dimensional spectrally positive $\alpha_k$-stable
L\'evy process with symbol $\Psi_{\alpha_k}$ as in \eqref{alpha stable symbol},
where $\alpha_1,\dots, \alpha_m \in (1,2)$.
The process $J$, which is independent of $Z = (Z_1,\dots, Z_m)$, is a L\'evy subordinator on $\R_+^m$, i.e., its  L\'evy measure $\nu$ is supported on $\R_+^m$ and $J$ has symbol
\[
 \Psi_J(\xi) = \int_{\R_+^m}\left( \mathrm{e}^{\mathrm{i}\langle \xi, z \rangle} - 1 \right)\nu(\mathrm{d}z), \qquad \int_{\R_+^m}\min\{1,|z|\} \nu(\mathrm{d}z) < \infty.
\]

It follows from \cite{BLP15} that \eqref{eq: a roots model} has a unique $\R_+^m$-valued strong solution. Moreover,
this process is an affine process on state space $\mathbb{R}_{+}^{m}$
(see \cite{DFS03, BLP15}) whose characteristic function satisfies
 \begin{align}\label{affine property}
  \E[ \mathrm{e}^{\langle u, X^x(t) \rangle } ]
   = \mathrm{e}^{ \phi(t, u) + \langle x, \psi(t, u)\rangle}, \qquad x \in \R_+^m,
 \end{align}
 where $u \in \mathbb{C}^m$ is such that $\mathrm{Re}(u) \leq 0$.
 Here $\phi$ and $\psi = (\psi_1,\dots, \psi_m)$ are the unique solutions to the generalized Riccati equations
 \begin{align} \label{eq:riccati}
  \begin{cases} \partial_t \phi(t,u) = F(\psi(t,u)), & \phi(t,0) = 0,
     \\ \partial_t \psi(t,u) = R(\psi(t,u)), & \psi(t,0) = u,
 \end{cases}
 \end{align}
 where $F$ and $R = (R_1, \dots, R_m)$ are given by
 \begin{align*}
     F(u) &= \langle b, u \rangle + \int_{\R_+^m} \left(\mathrm{e}^{\langle u, z\rangle} - 1\right) \nu(\mathrm{d}z),
     \\   R_j(u) &= \sum_{k=1}^{m}\beta_{kj}u_k
     + \int_{0}^{\infty}\left( \mathrm{e}^{u_j z} - 1 - u_jz \right) \mu_{\alpha_j}(\mathrm{d}z).
 \end{align*}
Following the general theory of affine processes it
can be shown that $(X^x(t))_{t \geq 0}$ is a Feller process,
that its transition semigroup
acts on the Banach space of continuous functions vanishing at infinity,
and that its generator $(L, D(L))$ has core $C_c^{\infty}(\R_+^m)$ and for $f \in C_c^{\infty}(\R_+^m)$
\begin{align*}
 Lf(x) &= \langle b + \beta x, \nabla f(x) \rangle
 + \int_{\R_+^m}\left( f(x + z) - f(x) \right) \nu(\mathrm{d}z)
 \\ &\ \ \ + \sum_{j=1}^{m} \sigma_j x_j \int_{0}^{\infty}\left( f(x + e_jz) - f(x) - z \frac{\partial f(x)}{\partial x_j} \right) \mu_{\alpha_j}(\mathrm{d}z),
\end{align*}
where $e_1,\ldots,e_m$ denote the canonical basis vectors in $\R^m$.

The purpose of this work is twofold.
Firstly, we investigate regularity of the heat kernel including a very simple proof of the strong Feller property, and secondly,
based on the obtained results we study the convergence to equilibrium in total variation.
On the way proving these results we also obtain non-extinction for the anisotropic stable JCIR process in the spirit of \cite{FU13, DFM14, FJR19b}.

One commonly used method to study existence and smoothness of heat kernels is based on Malliavin calculus, see e.g. \cite{BC86, D11, P96} and the references therein.
Concerning other analytical methods we refer to \cite{BSK17, C18, KK18, KR17, KR19, KRS18} where some interesting progress for stochastic equations driven by cylindrical L\'evy processes has been obtained.
Having in mind that the anisotropic stable JCIR process has no diffusion component, that the L\'evy measure of the driving noise is singular and has no second moments, and finally that the volatility coefficients in \eqref{eq: a roots model} are merely H\"older continuous and degenerate at the boundary,
it is not clear how the aforementioned techniques could be applied in the setting of this paper. Based on the affine structure of the process it is reasonable to study the heat kernel by Fourier methods similarly to \cite{FMS13}, where affine processes with non-degenerate diffusion component were treated, or by spectral expansions in the spirit of \cite{CLP18}. While Fourier methods turn out to be adequate for proving
existence of a smooth density for the one-dimensional stable JCIR process (see Section 3),
it seems difficult to extend them to the anisotropic framework with absent diffusion component. Moreover, it would be interesting
to extend the techniques developed in \cite{CLP18} to this multi-dimensional setting.
In contrast, our approach for the study of the multi-dimensional case
is based on a suitable short-time approximation of the process combined with a discrete integration by parts in the spirit of
\cite{DF13, R17, FJR18, FJR18a}.
Since these methods do not use the affine structure of the process,
they can be applied to other Markov processes as well.

The long-time behavior of one-dimensional affine processes with state space $\R_+$
was studied in \cite{KM12}, \cite[Chapter 3]{L11} and \cite{LM15}.
Results applicable to a class of non-affine Markov processes on $\R_+$
have been recently obtained in \cite{FJKR19}.
The coupling method in \cite{LM15} is very effective for $1$-dimensional continuous-state branching processes with immigration. However, it used the fact the the extinction time of a continuous-state branching process can be estimated via the Laplace transform of the process. It is not clear if this approach can be extended to higher dimensional cases. For subcritical OU-type processes and 1-dimensional
continuous-state branching processes with immigration,
the exponential ergodicity in total variation has been derived under rather general conditions, see \cite{W12,LM15} and \cite{FJKR19},
all of which used coupling techniques.
Other than these two cases, only very few
results on ergodicity in total variation are available for multi-dimensional
affine processes, except for the models treated in \cite{BDLP14,JKR17, MSV18,ZG18}.
The reason is as follows: in the general case it is not clear if the
powerful coupling technique (see \cite{W12, LM15}) still works;
also, it remains a difficult problem to verify the irreducibility of the process when applying the Meyn-Tweedie method (see \cite{MT09}).
To overcome these difficulties we use instead a Harris-type theorem based on a local Dobrushin condition, see Theorem \ref{th:3} and \cite{H16, Kulik18}.
In order to verify the local Dobrushin condition
we use continuity (regularity) of the heat kernel combined with some weak form of irreducibility similarly to \cite{PZ18}.
At this point it is worthwhile to mention that the verification of the local Dobrushin condition does not require the full strength of our regularity result.
Indeed, one could apply \cite[Proposition 2.9.1 and Remark 2.9.2]{Kulik18} for which the Besov regularity from Section 4 is sufficient.
This work seems to provide the first result on ergodicity in total variation
for multi-dimensional affine processes which does not rely on smoothing properties of the diffusion component. Moreover, the method of this paper
can be also applied to non-affine Markov processes.

This paper is organized as follows.
In Section 2 we state and discuss the main results of this work.
Regularity of the heat kernel for the one-dimensional stable JCIR process as in \eqref{stable CIR} is discussed in Section 3.
Regularity of the anisotropic stable JCIR process is studied in Section 4,
while ergodicity in total variation is proved in Section 5.
Finally, some auxiliary results and general theory on ergodicity of Markov processes are collected in the appendix.

\section{Statement of results}

\subsection{Existence and smoothness of the heat kernel in dimension $m = 1$}
Let $(X^x(t))_{t \geq 0}$ be the one-dimensional stable JCIR process, i.e.,
the unique $\R_+$-valued strong solution to \eqref{stable CIR}
and $P_t(x,\mathrm{d}y)$ its transition probability kernel.
The following is our first main result.
\begin{Theorem}\label{th: regularity in one dimension}
Suppose that there exist constants $C, M > 0$ and  $\vartheta\in(\alpha-1,1]$ such that
\begin{equation}
b\xi+\int_{0}^{\infty}\left(1-\mathrm{e}^{-z\xi}\right)\nu(\mathrm{d}z)\geq C\xi^{\vartheta}, \qquad \xi \geq M.\label{eq: condition for immigraion}
\end{equation}
Then for each $t>0$ and each $x\geq0$, the heat
kernel $P_{t}(x,\mathrm{d}y)$ has density $p_{t}(x,y)$ which is jointly continuous in $(t,x,y) \in (0,\infty) \times [0,\infty)^2$. Moreover, for each $t > 0$,
the function $\R_+ \times \R_+ \ni (x,y) \longmapsto p_{t}(x,y)$ is smooth and
\[
 \sup_{(x,y) \in \R_+ \times \R_+}|\partial^n_x \partial^k_y p_t(x,y)| < \infty, \qquad \forall  n,k \in \mathbb{N}_0.
\]
\end{Theorem}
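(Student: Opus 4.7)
The plan is to exploit the affine representation from \eqref{affine property}--\eqref{eq:riccati} and invert the characteristic function $\E[e^{i\xi X^{x}(t)}] = \exp(\phi(t, i\xi) + x\,\psi(t, i\xi))$. If I can show that $\mathrm{Re}(\phi(t,i\xi) + x\psi(t,i\xi)) \to -\infty$ like a positive power of $|\xi|$ (with constants locally uniform in $(t,x)$), then Fourier inversion produces a density
\[
 p_{t}(x,y) = \frac{1}{2\pi}\int_{\R} e^{-iy\xi}\,e^{\phi(t,i\xi) + x\psi(t,i\xi)}\,d\xi,
\]
and differentiation in $x$ and $y$ under the integral sign yields joint continuity on $(0,\infty)\times[0,\infty)^{2}$, smoothness in $(x,y)$, and uniform bounds on all mixed partial derivatives.

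First I would analyse $\psi(s,i\xi)$ via the Riccati equation $\partial_{s}\psi = \beta\psi + (-\psi)^{\alpha}$, where the branch of $(-u)^{\alpha}$ is fixed by $\Psi_{\alpha}(i\xi)=\xi^{\alpha}$ for $\xi\ge 0$ and analytically continued to $\{\mathrm{Re}\,u\le 0\}$. Since the left half-plane is invariant for this flow, $\mathrm{Re}\,\psi(s,i\xi)\le 0$. The substitution $w=\psi^{1-\alpha}$ linearises the ODE to
\[
 \partial_{s}w = (1-\alpha)\beta w + (1-\alpha)(-1)^{\alpha}, \qquad w(0)=(i\xi)^{1-\alpha},
\]
which has an explicit solution. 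Extracting asymptotics for $|\xi|$ large and $s\in(0,t]$ gives two regimes. For $s\in(0,c|\xi|^{1-\alpha})$ the initial datum dominates, so $\psi(s,i\xi)\approx i\xi$ and $|\mathrm{Re}\,\psi(s,i\xi)|\asymp s|\xi|^{\alpha}$. For $s\in(c|\xi|^{1-\alpha},t)$ the initial datum is forgotten, $|\psi(s,i\xi)|\asymp s^{-1/(\alpha-1)}$, and the argument of $\psi(s,i\xi)$ stays in a compact subset of $(\pi/2,3\pi/2)$ so that $|\mathrm{Re}\,\psi(s,i\xi)|\asymp|\psi(s,i\xi)|$.

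Next, the immigration hypothesis \eqref{eq: condition for immigraion} controls $\mathrm{Re}\,F$: for any $u$ with $a:=-\mathrm{Re}\,u\ge M$, the estimate $1-e^{-az}\cos(z\,\mathrm{Im}\,u)\ge 1-e^{-az}$ gives
\[
 -\mathrm{Re}\,F(u) \ge ba + \int_{0}^{\infty}(1-e^{-az})\,\nu(dz) \ge C a^{\vartheta}.
\]
Applying this on the set $\{s\in[0,t]\,:\,|\mathrm{Re}\,\psi(s,i\xi)|\ge M\}$, and using the trivial bound $-\mathrm{Re}\,F(\psi)\ge 0$ elsewhere (from the L\'evy--Khintchine form of $F$ on $\{\mathrm{Re}\,u\le 0\}$), I would split $-\mathrm{Re}\,\phi(t,i\xi)=\int_{0}^{t}(-\mathrm{Re}\,F(\psi(s,i\xi)))\,ds$ at $s_{0}=c|\xi|^{1-\alpha}$:
\[
 |\xi|^{\alpha\vartheta}\int_{M|\xi|^{-\alpha}}^{s_{0}} s^{\vartheta}\,ds \;\asymp\; |\xi|^{\vartheta-\alpha+1}, \qquad \int_{s_{0}}^{t\wedge M^{-(\alpha-1)}} s^{-\vartheta/(\alpha-1)}\,ds \;\asymp\; |\xi|^{\vartheta-\alpha+1},
\]
where the second asymptotic crucially uses $\vartheta>\alpha-1$. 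Combining yields $-\mathrm{Re}\,\phi(t,i\xi)\gtrsim |\xi|^{\vartheta-\alpha+1}$ for $|\xi|$ large, with constants locally uniform in $t>0$.

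Since $\psi(t,i\xi)$ converges to a finite complex constant as $|\xi|\to\infty$ (from the explicit formula for $w$, as $(i\xi)^{1-\alpha}\to 0$ when $1-\alpha<0$), the full exponent satisfies
\[
 |e^{\phi(t,i\xi)+x\psi(t,i\xi)}| \le \exp\bigl(C(t)x - c(t)|\xi|^{\vartheta-\alpha+1}\bigr)
\]
for $|\xi|$ large. The right-hand side is super-polynomially small in $|\xi|$, so the Fourier integral and all its $y$-derivatives (which pull down polynomial factors $\xi^{n}$) converge absolutely, and differentiation in $x$ multiplies the integrand by $\psi(t,i\xi)^{n}$, which is uniformly bounded in $\xi$. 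Joint continuity in $(t,x,y)$ follows by dominated convergence. I expect the main obstacle to lie in the second step: establishing the precise two-regime asymptotics of $\psi(s,i\xi)$, especially checking that its argument remains in a compact subset of $(\pi/2,3\pi/2)$ in the intermediate regime so that $|\mathrm{Re}\,\psi|$ is genuinely comparable to $|\psi|$. Without this, the lower bound on $-\mathrm{Re}\,\phi$ collapses, and the argument requires careful bookkeeping with the branches of $(-u)^{\alpha}$ and $w^{1/(1-\alpha)}$ along the flow.
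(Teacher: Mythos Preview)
Your overall strategy coincides with the paper's: exploit the affine representation, show that $\xi\mapsto e^{\phi(t,i\xi)+x\psi(t,i\xi)}$ decays like $\exp(-c|\xi|^{1+\vartheta-\alpha})$, and then Fourier invert and differentiate under the integral. The paper also uses that $x\,\Re\psi\le 0$ so the $x$-factor only helps, and bounds $|\psi(t,i\xi)|$ polynomially (via a linear growth bound from \cite{DFS03}) rather than claiming it converges to a constant.

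The substantive difference lies in how the decay of $-\Re\phi$ is obtained. You propose to track the complex-valued $\psi$ directly via the Bernoulli substitution $w=\psi^{1-\alpha}$, and you correctly flag the weak point: one must know that $\arg\psi(s,i\xi)$ stays in a compact subset of $(\pi/2,3\pi/2)$ in the intermediate regime, and the branches of $(-u)^{\alpha}$ and $w^{1/(1-\alpha)}$ must be tracked along the flow. The paper bypasses this entirely. Writing $f=\Re\psi$, $g=\Im\psi$, the bound $\cos(gz)\le 1$ turns the Riccati equation into the \emph{real} differential inequality
\[
 \partial_s f \le \tilde\beta f + (-f)^{\alpha},
\]
so one never needs the argument of $\psi$, only a lower bound on $-f$. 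The paper then rescales $F(t,y)=|y|^{-1}f(t/|y|^{\alpha-1},iy)$, shows $F(t,y)\le -\delta t$ for small $t$ by a continuity argument at $(F,G)=(0,\pm 1)$, and compares $F$ with the explicit solution of the real Bernoulli ODE $\bar F'=\tilde\beta\bar F+(-\bar F)^{\alpha}$. Substituting back and integrating yields $-\Re\phi(t,i\xi)\ge C_1|\xi|^{1+\vartheta-\alpha}-C_2$, with the case split $b>0$ versus $b=0$ handled separately (in the latter case the hypothesis \eqref{eq: condition for immigraion} enters exactly where you use it). This real-variable comparison argument is the key simplification: it delivers the same decay exponent while avoiding all complex branch bookkeeping, which is precisely the obstacle you identified.
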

Condition \eqref{eq: condition for immigraion} is natural to guarantee that the process does not hit the boundary and hence $P_t(x,\mathrm{d}y)$ has no atom at the boundary, i.e. $p_t(x,y)$ is also continuous at $y = 0$.
Let us refer to \cite{FU13, DFM14} for some related results.
If $b > 0$, then \eqref{eq: condition for immigraion} is satisfied with $\vartheta = 1$ and $C = b$. In the case $b = 0$ condition \eqref{eq: condition for immigraion} is still satisfied provided that the subordinator $J$ has sufficiently many small jumps (see the examples at the end of this section).

The proof of Theorem \ref{th: regularity in one dimension} is given in Section 3 and deeply relies on the affine structure of the process (see \eqref{affine property}),
i.e. we exploit the fact that its characteristic function satisfies
\begin{align}\label{one dimensional affine}
\E[\mathrm{e}^{uX^{x}(t)}]=\mathrm{e}^{\phi(t,u)+x\psi(t,u)},\qquad t\geq0,\ x\geq0,
\end{align}
where $u\in\mathbb{C}$ is such that $\Re(u)\leq0$, and $\phi,\psi$
solve uniquely the generalized Riccati equations
\begin{align}\label{one dimensional riccati equation}
\begin{cases}
\partial_{t}\phi(t,u)=b\psi(t,u)+\int_{0}^{\infty}\left(e^{z\psi(t,u)}-1\right)\nu(\mathrm{d}z), \ \ \phi(0,u) = 0\\
\partial_{t}\psi(t,u)=\beta\psi(t,u)+\int_{0}^{\infty}\left(\mathrm{e}^{z\psi(t,u)}-1-\psi(t,u)z\right)\mu_{\alpha}(\mathrm{d}z), \ \ \psi(0,u) = u.
\end{cases}
\end{align}
We deduce the assertion by showing enough integrability for the characteristic function
$\R \ni u \longmapsto \E[\mathrm{e}^{\mathrm{i}uX^{x}(t)}]$.
For this purpose we adapt some ideas from the multi-dimensional diffusion case studied in \cite{FMS13}, where a H\"ormander-type condition on the drift and diffusion parameters is imposed.

\subsection{Existence of heat-kernel and strong Feller property in dimension $m \geq 1$}
Here and below we denote by $(X^x(t))_{t \geq 0}$ the anisotropic stable JCIR process obtained from \eqref{eq: a roots model} with initial condition $X^x(0) = x \in \R_+^m$,
and recall that it depends on the parameters $b \in \R_+^m, \sigma_1,\dots, \sigma_m \geq 0$, $\alpha_1,\dots, \alpha_m \in (1,2)$, $(\beta_{kj})_{k,j = 1,\dots, m}$ with $\beta_{kj} \geq 0$ for $k \neq j$, and a L\'evy subordinator $\nu(\mathrm{d}z)$ on $\R_+^m$. Finally, let us assume that $\sigma_1, \dots, \sigma_m > 0$.
The case where $\sigma_i = 0$ holds for some $i \in \{1,\dots, m\}$ can
be also studied by the methods of this paper provided we assume an additional ''non-degeneracy'' condition on the one-dimensional L\'evy process $(J_i(t))_{t \geq 0}$. However, in order to keep the arguments simple and neat we decided to exclude these cases.
The following condition is a multi-dimensional analogue
of \eqref{eq: condition for immigraion}.

\begin{enumerate}
 \item[(A)] There exist constants $C, M > 0$ and $\vartheta_1,\dots, \vartheta_m$ such that for all $k = 1, \dots, m$, $\vartheta_k \in (\alpha_k -1,1]$ and
 \begin{align}\label{cond: A}
   b_k \xi + \int_{\R_+^m}\left( 1 - \mathrm{e}^{- \xi z_k}\right)\nu(\mathrm{d}z) \geq C \xi^{\vartheta_k}, \qquad \forall \xi \geq M.
 \end{align}
\end{enumerate}

\begin{Remark}
 If $b \in \R_{++}^m = \{ x \in \R_+^m \ | \ x_1,\dots, x_m > 0\}$,
 then condition (A) is satisfied. If $b \in \partial \R_+^m$,
 then condition (A) is still satisfied provided that the L\'evy process $J$ has sufficiently many jumps in direction $k$ with $b_k = 0$. Some particular examples satisfying condition (A) with $b \in \partial \R_+^m$ are given in the end of this section.
\end{Remark}
 Condition (A) guarantees that the process has a sufficiently strong drift pointing inwards (i.e. in the interior $\R_{++}^m$) and hence does not hit the boundary of its state space, see Section 4 for additional details.
 The next remark states that (A) imposes essentially a condition that is independent of  the big jumps of the subordinator $J$.
 \begin{Remark}
  Let $b \in \R_+^m$ and let $\nu$ be a L\'evy measure on $\R_+^m$. Then condition (A) is satisfied for $b,\nu$ if and only if it is satisfied for $b,\mathbbm{1}_{ \{ |z| \leq 1\} }\nu(\mathrm{d}z)$.
 \end{Remark}
 The following is our main regularity result for the heat kernel of the anisotropic stable JCIR process.
\begin{Theorem}\label{corr L1 continuity}
 Suppose that condition (A) is satisfied. Then $P_t(x,\mathrm{d}y) = p_t(x,y)\mathrm{d}y$ and
 \[
  \R_+^m \ni x \longmapsto p_t(x,\cdot) \in L^1(\R_+^m)
 \]
 is continuous for each $t > 0$.
 In particular, the anisotropic stable JCIR process has the strong Feller property.
 \end{Theorem}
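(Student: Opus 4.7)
The strategy is to establish the $L^1$-continuity of $x \mapsto p_t(x,\cdot)$ by first proving pointwise convergence of the densities and then upgrading it via Scheffé's lemma; the strong Feller property will then be an immediate consequence. The existence of the density $p_t(x,\cdot) \in L^1(\R_+^m)$ for every $t > 0$ and $x \in \R_+^m$ is a direct consequence of the anisotropic Besov regularity result established in Section 4.

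For the pointwise convergence, fix $t > 0$ and a sequence $x_n \to x_0$ in $\R_+^m$. Setting $\hat{p}_t(x, \xi) := \E[\mathrm{e}^{\mathrm{i}\langle \xi, X^x(t)\rangle}]$, the affine characterization \eqref{affine property} yields
\[
 \hat{p}_t(x, \xi) = \mathrm{e}^{\phi(t, \mathrm{i}\xi) + \langle x, \psi(t, \mathrm{i}\xi)\rangle}, \qquad x \in \R_+^m,\ \xi \in \R^m.
\]
Since $X^x(t)$ takes values in $\R_+^m$, one necessarily has $\Re \psi_k(t, \mathrm{i}\xi) \leq 0$ for every $k$ (otherwise $|\hat{p}_t(x, \xi)|$ would blow up as $x_k \to \infty$), whence the uniform bound $|\hat{p}_t(x_n, \xi)| \leq |\mathrm{e}^{\phi(t, \mathrm{i}\xi)}|$. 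The right-hand side is integrable over $\R^m$ in $\xi$ by the Section 4 regularity estimate applied at $x = 0$. Since $\hat{p}_t(x_n, \xi) \to \hat{p}_t(x_0, \xi)$ pointwise by continuity in $x$, dominated convergence applied to the Fourier inversion formula gives $p_t(x_n, y) \to p_t(x_0, y)$ uniformly in $y \in \R_+^m$.

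Because $p_t(x_n, \cdot)$ and $p_t(x_0, \cdot)$ are probability densities with pointwise convergence, Scheffé's lemma upgrades this to $L^1$-convergence, $\|p_t(x_n, \cdot) - p_t(x_0, \cdot)\|_{L^1(\R_+^m)} \to 0$. The strong Feller property is then immediate, since for every bounded measurable $f$,
\[
 |P_t f(x_n) - P_t f(x_0)| \leq \|f\|_\infty \, \|p_t(x_n, \cdot) - p_t(x_0, \cdot)\|_{L^1} \to 0.
\]
The delicate point is justifying $|\mathrm{e}^{\phi(t, \mathrm{i}\xi)}| \in L^1(\R^m, \mathrm{d}\xi)$, which relies crucially on condition (A): through the Riccati equation \eqref{eq:riccati}, condition (A) forces $\Re \phi(t, \mathrm{i}\xi) \to -\infty$ sufficiently fast as $|\xi| \to \infty$, and this is precisely the content of the Section 4 anisotropic Besov regularity specialized to the zero initial condition.
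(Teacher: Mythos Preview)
There is a genuine gap. Your argument hinges on the claim that $|\mathrm{e}^{\phi(t,\mathrm{i}\xi)}|\in L^{1}(\R^{m},\mathrm{d}\xi)$, and you justify this by appealing to ``the Section~4 anisotropic Besov regularity specialized to the zero initial condition.'' But Section~4 does \emph{not} prove any Fourier decay of the characteristic function: Theorem~\ref{th:04} gives a bound on the \emph{weighted} density $\rho_{\delta}(y)p_{t}(x,y)$ in the Besov norm $B^{\lambda,a}_{1,\infty}$ with small $\lambda>0$, which is an $L^{1}$-type regularity in the spatial variable $y$ and says nothing about integrability of $\xi\mapsto\hat p_{t}(x,\xi)$. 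Integrability of the characteristic function would in fact yield a \emph{bounded continuous} density, a conclusion strictly stronger than anything proved in Section~4. The only Fourier-decay estimate in the paper is Proposition~\ref{prop:00}, and that is one-dimensional; the introduction explicitly remarks that extending the Fourier method to the multi-dimensional anisotropic setting ``seems difficult,'' which is precisely why Section~4 takes a completely different route. A second, smaller issue: the Besov estimate of Theorem~\ref{th:04} requires the extra moment hypothesis $\int_{|z|>1}|z|^{1+\tau}\nu(\mathrm{d}z)<\infty$, so even the existence of $p_{t}(x,\cdot)$ under condition~(A) alone is not immediate from Section~4 without the convolution decomposition \eqref{eq: decompose P(t,.)}, which you do not invoke.

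For comparison, the paper's proof avoids Fourier inversion entirely. It first truncates the big jumps of $\nu$ so that the moment condition of Theorem~\ref{th:04} holds, then combines the uniform Besov bound with the Feller property and Kolmogorov--Riesz compactness to get $L^{1}$-continuity of the weighted density $\rho_{\delta}(\cdot)q^{0}_{t}(x,\cdot)$ (Lemma~\ref{LEMMA:03}); the weight is removed using the boundary non-attainment estimate of Proposition~\ref{th:boundary behavior}; and finally the general $\nu$ is handled by the convolution identity $P_{t}(x,\cdot)=Q^{0}_{t}(x,\cdot)\ast Q^{1}_{t}(0,\cdot)$. None of these steps appeals to integrability of $\mathrm{e}^{\phi(t,\mathrm{i}\xi)}$.
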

The proof of this result is given in Section 4
and is divided into 4 steps.
Namely, we first prove existence of a heat kernel under an additional moment condition for $\nu$ and provide an estimate in a suitably weighted anisotropic Besov norm which takes also the behavior of the process at the boundary into account. Secondly, we estimate uniformly the probability that the process hits its boundary in positive time. Then, with the same  moment condition for $\nu$, we deduce the assertion from a compactness argument combined with previous two steps. Finally, we use a convolution trick to remove the extra moment assumption and prove the assertion in the general case. The same approach can also be applied to general affine (and non-affine) processes.

 \subsection{Exponential ergodicity in total variation}
The anisotropic stable JCIR process is called \textit{subcritical}, if
$\beta = (\beta_{jk})_{j,k \in \{1,\dots, m\}}$ has only eigenvalues with negative real-parts.
Assuming that the anisotropic stable JCIR process is subcritical and satisfies
\begin{align}\label{nu log moment}
  \int_{\R_+^m} \mathbbm{1}_{ \{ |z| > 1\} } \log(1 + |z|) \nu(\mathrm{d}z) < \infty,
\end{align}
existence, uniqueness and a representation of the characteristic function for
the invariant measure $\pi$ was first obtained in \cite{JKR18} where stability for
the corresponding Riccati equations was investigated. Then
\begin{align}\label{log moment pi}
 \int_{\R_+^m} \log( 1 + |x| ) \pi(\mathrm{d}x) < \infty
\end{align}
and an exponential rate of convergence
for $P_t(x,\cdot) \longrightarrow \pi$ in different Wasserstein distances
was shown in \cite{FJR19a} where affine processes on the canonical state space have been obtained as unique strong solutions to a system of stochastic equations. For one-dimensional affine processes on $\R_+$ regularity (and other properties)
of the invariant measure $\pi$ was studied in \cite{CLP18, KM12}.
Using the regularity for the heat kernel obtained in Theorem \ref{corr L1 continuity} we prove exponential ergodicity in the total variation norm
\begin{align}\label{eq:01}
 \| \rho \|_{\mathrm{TV}} = \sup \limits_{A \in \mathcal{B}(\R_+^m)}|\rho|(A)
 = \sup \limits_{\| f\|_{\infty} \leq 1} \left| \int_{\R_+^m}f(x) \rho(\mathrm{d}x)\right|,
\end{align}
where $|\rho| = \rho^+ + \rho^-$ and $\rho^{\pm}$ denote the Hahn-Jordan decomposition of a signed Borel measure $\rho$ on $\R_+^m$.
Our last main result provides a sufficient condition for the exponential ergodicity in the stronger total variation distance.
\begin{Theorem}\label{th:main result}
 Suppose that the anisotropic stable JCIR process is subcritical, satisfies condition (A) and \eqref{nu log moment}.
 Then there exist constants $C, \delta > 0$ such that for all $t \geq 0$ and $x \in \R_+^m$
 \[
  \| P_t(x,\cdot) - \pi \|_{\mathrm{TV}} \leq C \left( 1 + \log(1 + |x|) + \int_{\R_+^m}\log(1+|y|)\pi(\mathrm{d}y)\right) \mathrm{e}^{- \delta t}.
 \]
\end{Theorem}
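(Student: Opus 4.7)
The plan is to apply the Harris-type theorem announced as Theorem \ref{th:3}, for which one has to verify two ingredients: (i) a Foster--Lyapunov drift inequality
\[
\E[V(X^x(t_0))] \leq \gamma V(x) + K, \qquad x \in \R_+^m,
\]
with $\gamma \in (0,1)$ and some $t_0,K > 0$, and (ii) a local Dobrushin (minorization) condition on the sub-level sets $\{V \leq R\}$ of the Lyapunov function. In view of \eqref{nu log moment} and the bound \eqref{log moment pi} on the invariant measure $\pi$ established in \cite{FJR19a}, the natural choice is
\[
V(x) = 1 + \log(1 + |x|).
\]

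The drift inequality (i) follows essentially by rearranging estimates already contained in \cite{FJR19a, JKR18}. Subcriticality of $\beta$ yields $|e^{t\beta^{\top}} x| \leq C e^{-\eta t}|x|$ for some $\eta > 0$; the sublinearity $\log(1 + |x+y|) \leq \log(1+|x|) + \log(1+|y|)$ together with concavity controls the contribution of the stable stochastic integrals and the compensated jump terms; and \eqref{nu log moment} is precisely what is required to integrate $\log(1 + |z|)$ against the L\'evy measure of $J$. Taking $t_0$ large enough, $\gamma = C e^{-\eta t_0} < 1$.

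The main obstacle is (ii). Fix $R > 0$ and note that $C_R := \{V \leq R\}$ is a compact subset of $\R_+^m$. By Theorem \ref{corr L1 continuity} the map $C_R \ni x \mapsto p_{t_0}(x,\cdot) \in L^1(\R_+^m)$ is continuous, so its image is $L^1$-precompact. To convert this into a uniform minorization $P_{t_0}(x,A) \geq \eta \mu(A)$ for $x \in C_R$ it suffices, as in \cite{PZ18}, to exhibit a point $y_* \in \R_{++}^m$ and an open neighborhood $U$ of $y_*$ with $\inf_{x \in C_R} P_{t_0}(x,U) > 0$. Such a reachability/irreducibility statement, in the absence of a diffusion component, is the technically delicate point: one cannot invoke the classical support theorem, and instead one has to show that the inward-pointing deterministic skeleton driven by $b$ and $\beta$ (contracting by subcriticality and pushed into $\R_{++}^m$ by condition (A)) together with the arbitrarily small positive jumps of the $\alpha_k$-stable drivers suffice to cover an open neighborhood of the skeleton's time-$t_0$ position. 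Condition (A) is also what guarantees that $P_{t_0}(x,\cdot)$ puts no mass on $\partial \R_+^m$, which is needed to compare with a reference measure supported in the interior.

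With (i) and (ii) verified, Theorem \ref{th:3} yields constants $C,\delta > 0$ and an invariant measure $\tilde\pi$ with
\[
\|P_t(x,\cdot) - \tilde\pi\|_{\mathrm{TV}} \leq C\bigl(V(x) + \tilde\pi(V)\bigr) e^{-\delta t}, \qquad x \in \R_+^m,\ t \geq 0.
\]
Uniqueness of the invariant measure with finite logarithmic moment, established in \cite{JKR18, FJR19a}, forces $\tilde\pi = \pi$, and inserting $V(x) = 1+\log(1+|x|)$ together with \eqref{log moment pi} produces the bound stated in Theorem \ref{th:main result}.
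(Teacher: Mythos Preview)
Your plan diverges from the paper's argument in two essential ways, and the first is a genuine obstruction. The logarithmic Lyapunov function $V(x)=1+\log(1+|x|)$ does \emph{not} satisfy the geometric drift condition required by Theorem~\ref{th:3}. The dominant drift contribution $\langle\beta x,\nabla V(x)\rangle$ is of order $-c$ for large $|x|$ (since $|\nabla V(x)|\sim 1/|x|$), so one gets at best $LV\le K$ uniformly, which yields only an additive decrease $\E[V(X^x(t_0))]\lesssim V(x)-c\,t_0+K$. Equivalently, $\log(1+Ce^{-\eta t}|x|)\approx\log(1+|x|)-\eta t$ for large $|x|$, so your claimed contraction factor $\gamma=Ce^{-\eta t_0}$ is incorrect; a logarithmic Lyapunov produces subgeometric, not exponential, rates through the standard Harris machinery.

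The paper sidesteps this by applying Theorem~\ref{th:3} not to $X^x$ but to the auxiliary process $Y^x$ driven by the truncated subordinator with L\'evy measure $\nu_0=\mathbbm{1}_{\{|z|\le1\}}\nu$. For $Y^x$ the \emph{linear} Lyapunov $V(x)=(1+|x|_M^2)^{1/2}$ is admissible (Lemma~\ref{Lyapunov estimate}), and Harris gives $\|Q^0_t(x,\cdot)-Q^0_t(y,\cdot)\|_{\mathrm{TV}}\le C\min\{1,(1+|x|+|y|)e^{-\delta t}\}$. The convolution identity $P_t(x,\cdot)=Q^0_t(x,\cdot)\ast Q^1_t(0,\cdot)$ transfers this bound to $P_t$ unchanged, and only at the very end is the linear weight converted into the logarithmic one via the elementary inequality $1\wedge(ab)\le Ca(1+\log(1+b))$, integrated against a coupling of $\delta_x$ and $\pi$. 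This final step, not a logarithmic Lyapunov function, is where $\log(1+|x|)$ enters the statement.

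The second divergence concerns the local Dobrushin condition. You sketch an irreducibility/support argument and rightly flag it as delicate; the paper avoids it entirely. It combines the $L^1$-continuity of $x\mapsto q^0_1(x,\cdot)$ from Theorem~\ref{corr L1 continuity} with the contraction $\E[|Y^x(h{-}1)-Y^y(h{-}1)|]\le C|x-y|e^{-c(h-1)}$ from \cite{FJR19a} and a uniform first-moment bound to show directly that $\|Q^0_h(x,\cdot)-Q^0_h(y,\cdot)\|_{\mathrm{TV}}\le 2-\delta$ on compacta, without any reachability or support-theorem input.
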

The proof of this theorem is based on a Harris-type theorem and is given in Section 5.
It basically requires to check a local Dobrushin condition and a Foster-Lyapunov drift condition for the extended generator.
The local Dobrushin condition is deduced from the regularity results from Theorem \ref{corr L1 continuity}
combined with a weak form of irreducibility similar to \cite{PZ18}.
Finally, the Foster-Lyapunov condition can be checked by direct computation
combined with a convolution argument similar to \cite{FJR19a, JKR18}.

\subsection{Examples for main conditions}
Recall that condition (A) is satisfied, if $b \in \R_{++}^m$.
So let us consider the case $b \in \partial \R_+^m$.
For simplicity, we suppose that $b = 0$ and provide conditions on $\nu$ such that
(A) is still satisfied. Set
\[
\alpha_{\max}:=\max \{\alpha_1,\dots,\alpha_m\},\quad \alpha_{\min}:=\min \{\alpha_1,\dots,\alpha_m\}.
\]
\begin{Example}
 Let $\nu$ be given by the spherical decomposition
 \begin{align}\label{spherical decomposition nu}
  \nu(A) = \int_0^{\infty} \int_{S_+^{m-1}}\mathbbm{1}_{A}(r\sigma)\lambda(\mathrm{d}\sigma)\frac{\mathrm{d}r}{r^{1 + \vartheta}}
 \end{align}
 where $\vartheta \in (\alpha_{\max}-1,1)$, $S_+^{m-1} = \{ \sigma \in \R_+^m \ | \ |\sigma| = 1\}$, and $\lambda$ is a measure on $S_+^{m-1}$. Then we obtain
 \[
  \int_{\R_+^m} \left( 1 - \mathrm{e}^{- \xi z_k} \right)\nu(\mathrm{d}z)
  = \xi^{\vartheta} \int_{S_+^{m-1}}\sigma_k^{\vartheta} \lambda(\mathrm{d}\sigma) \int_{0}^{\infty}\left(1 - \mathrm{e}^{- r} \right)\frac{\mathrm{d}r}{r^{1+\vartheta}}.
 \]
 Hence (A) holds, if $\int_{S_+^{m-1}}\sigma_k^{\vartheta} \lambda(\mathrm{d}\sigma) > 0$.
 This includes the following cases:
 \begin{enumerate}
     \item[(a)] If $\lambda(\mathrm{d}\sigma) = \mathbbm{1}_{S_+^{m-1}}(\sigma)\mathrm{d}\sigma$ is the uniform distribution on $S_+^{m-1}$, then
     \[
      \nu(\mathrm{d}z) = \mathbbm{1}_{\R_+^m}(z) \frac{\mathrm{d}z}{|z|^{d+\vartheta}}.
     \]
     \item[(b)] If $\lambda(\mathrm{d}\sigma) = \sum_{k=1}^{m}\delta_{e_k}(\mathrm{d}\sigma)$, then
     \[
      \nu(\mathrm{d}z) = \sum_{k=1}^{m} \mathbbm{1}_{\R_+}(z_k)\frac{\mathrm{d}z_k}{z_k^{1 + \vartheta}} \otimes \prod_{j \neq k} \delta_{0}(\mathrm{d}z_j).
     \]
 \end{enumerate}
\end{Example}
 The next example shows that the stability index $\vartheta$ appearing in \eqref{spherical decomposition nu}
 is also allowed to depend on the direction of the jump.
\begin{Example}
 Let $J(t) = (J_1(t), \dots, J_m(t))$ where $J_1,\dots, J_m$ are independent
 L\'evy subordinators on $\R_+$ with L\'evy measures $\mathbbm{1}_{\R_+}(z_k)z_k^{-1-\vartheta_k}\mathrm{d}z_k$ with $\nu_k \in (\alpha_k-1,1)$and $k = 1,\dots, m$. Then $J$ has L\'evy measure
 \[
  \nu(\mathrm{d}z) = \sum_{k=1}^{m} \mathbbm{1}_{\R_+}(z_k) \frac{\mathrm{d}z_k}{z_k^{1+\vartheta_k}} \otimes \prod_{j \neq k}\delta_0(\mathrm{d}z_j)
 \]
 and for $\xi \geq 0$ it holds that
 \[
  \int_{\R_+^m} \left( 1 - \mathrm{e}^{- \xi z_k} \right)\nu(\mathrm{d}z)
  = \xi^{\vartheta_k} \int_{0}^{\infty}\left( 1 - \mathrm{e}^{- r}\right)\frac{\mathrm{d}r}{r^{1+\vartheta_k}}.
 \]
 In particular condition (A) is satisfied.
\end{Example}
We may also easily find examples where in some directions $b_k > 0$ while for other directions $b_k = 0$ and the L\'evy measure $\nu$ has sufficiently many jumps
(e.g. it is given by previous two examples).
Our last example provides a deviation from anisotropic stable L\'evy measures.
\begin{Example}
 Take $\vartheta_k \in (\alpha_k -1,1)$, $k=1,\dots, m$,  and let $\nu$ be given by
 \begin{align*}
 \nu(\mathrm{d}z) &= \sum_{k=1}^{m} g_k(z_k) \frac{\mathrm{d}z_k}{z_k^{1+\vartheta_k}} \otimes \prod_{j \neq k}\delta_0(\mathrm{d}z_j),
 \end{align*}
 where $g_k: \R_+ \longrightarrow \R_+$ are bounded.
 For each $\xi \geq 1$ we obtain
 \begin{align*}
  \int_{\R_+^m} \left( 1 - \mathrm{e}^{- \xi z_k} \right)\nu(\mathrm{d}z)
  &= \xi^{\vartheta_k} \int_{0}^{\infty}\left( 1 - \mathrm{e}^{- r}\right)g_k\left( \frac{r}{\xi} \right)\frac{\mathrm{d}r}{r^{1+\vartheta_k}}
  \\ &\geq \xi^{\vartheta_k} \int_{0}^{1}\left( 1 - \mathrm{e}^{- r}\right)\frac{\mathrm{d}r}{r^{1+\vartheta_k}} \cdot \inf_{x \in [0,1]}\{g_k(x)\}
 \end{align*}
 Hence condition (A) is satisfied, provided
 $\inf_{x \in [0,1]}\{g_k(x) \}> 0$ holds for all $k = 1,\dots, m$.
\end{Example}

\section{Regularity of the heat kernel for the one-dimensional stable JCIR
process}

In this section we suppose that the conditions of Theorem \ref{th: regularity in one dimension}
are satisfied. Letting $f(t,u)=\Re(\psi(t,u))$ and $g(t,u)=\Im(\psi(t,u))$,
where $\psi$ is obtained from \eqref{one dimensional riccati equation},
we find that $f(t,\mathrm{i}y),g(t,\mathrm{i}y)$ are the unique solutions
to
\[
\begin{cases}
\partial_{t}f=\beta f+\int_{0}^{\infty}\left(\mathrm{e}^{fz}\cos(gz)-1-fz\right)\mu_{\alpha}(\mathrm{d}z), & f(0,\mathrm{i}y)=0,\\
\partial_{t}g=\beta g+\int_{0}^{\infty}\left(\mathrm{e}^{fz}\sin(gz)-gz\right)\mu_{\alpha}(\mathrm{d}z), & g(0,\mathrm{i}y)=y.
\end{cases}
\]
It follows from the general theory of affine processes (see \cite[Theorem 2.7]{DFS03})
that $f\leq0$. This property will be frequently used. The following
is our crucial estimate.
\begin{Proposition}\label{prop:00}
For each $t_{0}>0$, there exist constants $M,C_{1},C_{2}>0$, which depend
on $t_{0}$, such that for all $|y|\geq M$ and $t\ge t_{0}$,
\begin{align}
b\int_{0}^{t}f(s,\mathrm{i}y)\mathrm{d}s+\int_{0}^{t}\int_{0}^{\infty}\left(\mathrm{e}^{zf(s,\mathrm{i}y)}-1\right)\nu(\mathrm{d}z)\leq-C_{1}|y|^{1+\vartheta-\alpha}+C_{2}.\label{eq:00}
\end{align}
\end{Proposition}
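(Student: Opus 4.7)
My plan is to produce, for $|y|$ large, a linear lower bound of the form
\[
 |f(s, \mathrm{i}y)| \geq c_1|y|^\alpha s, \qquad s \in [0, c_0|y|^{1-\alpha}],
\]
on a short time window, and then combine it with the immigration condition \eqref{eq: condition for immigraion} to extract the $|y|^{1+\vartheta-\alpha}$ rate. Since the integrand on the LHS of \eqref{eq:00} is always non-positive (as $f \leq 0$ forces both $bf \leq 0$ and $\mathrm{e}^{zf} \leq 1$), we have for any $[a,b]\subset[0,t]$ that
\[
\text{LHS of \eqref{eq:00}} \leq \int_a^b \Bigl(bf(s,\mathrm{i}y) + \int_0^\infty(\mathrm{e}^{zf(s,\mathrm{i}y)} - 1)\nu(\mathrm{d}z)\Bigr)\mathrm{d}s,
\]
so the argument reduces to a single well-chosen sub-interval where $|f|$ is already large.

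The linear lower bound is obtained in two substeps. First, the map $R_0(u) := \int_0^\infty(\mathrm{e}^{uz} - 1 - uz)\mu_\alpha(\mathrm{d}z)$ satisfies $|R_0(u)| \leq C|u|^\alpha$ uniformly on $\{\Re u \leq 0\}$ by the usual split of the integral at $z = 1/|u|$ together with the $\alpha$-homogeneity of $\mu_\alpha$. Treating \eqref{one dimensional riccati equation} as an integral equation and bootstrapping from $\psi(0,\mathrm{i}y) = \mathrm{i}y$, this forces
\[
 |\psi(s, \mathrm{i}y) - \mathrm{i}y| \leq c'|y|, \qquad s \in [0, c_0|y|^{1-\alpha}],
\]
with $c'$ as small as desired (at the cost of shrinking $c_0$); in particular $|g(s, \mathrm{i}y) - y| \leq c'|y|$ and $|f(s, \mathrm{i}y)| \leq c'|y|$ throughout the window. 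Second, to convert this into dissipation of $f$, split
\[
 \mathrm{e}^{fz}\cos(gz) - 1 - fz = (\cos(gz) - 1) + \cos(gz)(\mathrm{e}^{fz} - 1 - fz) + fz(\cos(gz) - 1)
\]
and integrate against $\mu_\alpha$. The first piece yields $\Re\Psi_\alpha(g) = \cos(\pi\alpha/2)|g|^\alpha$, which is of order $-|y|^\alpha$ since $\cos(\pi\alpha/2) < 0$ for $\alpha \in (1,2)$; the second piece is bounded in absolute value by $|f|^\alpha$; the third produces $f\,\Gamma_\alpha|g|^{\alpha-1}$ with an explicit $\Gamma_\alpha < 0$. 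The bootstrap bounds make both correction terms $O(c'|y|^\alpha)$, absorbed by the main term for $c'$ small enough, giving $\partial_s f(s,\mathrm{i}y) \leq -c_1|y|^\alpha$, and integration yields the claimed linear lower bound.

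The conclusion is then bookkeeping. Let $M_0$ denote the threshold in \eqref{eq: condition for immigraion}, and set $s^* := M_0/(c_1|y|^\alpha)$. For $|y|$ large enough, $s^* \leq \tfrac{1}{2}c_0|y|^{1-\alpha}$ and $c_0|y|^{1-\alpha} \leq t_0 \leq t$, so $[s^*, c_0|y|^{1-\alpha}] \subset [0,t]$ and throughout this sub-interval $|f(s,\mathrm{i}y)| \geq M_0$. Applying \eqref{eq: condition for immigraion} with $\xi = |f(s, \mathrm{i}y)|$ yields
\[
 bf(s, \mathrm{i}y) + \int_0^\infty(\mathrm{e}^{zf(s, \mathrm{i}y)} - 1)\nu(\mathrm{d}z) = -\Bigl[b|f| + \int_0^\infty(1 - \mathrm{e}^{-z|f|})\nu(\mathrm{d}z)\Bigr] \leq -C c_1^\vartheta |y|^{\alpha\vartheta} s^\vartheta,
\]
and integrating $s^\vartheta$ over $[s^*, c_0|y|^{1-\alpha}]$, together with $s^* = O(|y|^{-\alpha}) \ll |y|^{1-\alpha}$, produces a factor of order $|y|^{(1-\alpha)(1+\vartheta)}$. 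Combined with the $|y|^{\alpha\vartheta}$ prefactor this gives the target rate $|y|^{\alpha\vartheta + (1-\alpha)(1+\vartheta)} = |y|^{1+\vartheta-\alpha}$. The additive constant $C_2$ in the statement can be used to absorb lower-order contributions coming from finitely many moderate values of $|y|$.

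The main obstacle is the linear lower bound for $|f|$. The coupling between the real and imaginary parts of $\psi$ in \eqref{one dimensional riccati equation} prevents writing a closed ODE for $f$ alone, so one must track $f$ and $g$ simultaneously and verify that the $|y|^\alpha$-sized dissipation coming from $\cos(\pi\alpha/2)|g|^\alpha$ genuinely dominates the correction terms $O(|f|^\alpha + |f||g|^{\alpha-1})$ throughout the bootstrap window. Once this delicate quantitative step is in place, everything else is integration against \eqref{eq: condition for immigraion}.
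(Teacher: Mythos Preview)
Your argument is correct and reaches the same conclusion as the paper, but takes a more economical route in the second half. Both proofs begin with the same short-time linear lower bound $|f(s,\mathrm{i}y)|\ge c_1|y|^\alpha s$ on a window of length $\sim |y|^{1-\alpha}$: the paper obtains it via the rescaling $F(t,y)=|y|^{-1}f(t|y|^{1-\alpha},\mathrm{i}y)$, $G(t,y)=|y|^{-1}g(t|y|^{1-\alpha},\mathrm{i}y)$ and a continuity argument for the rescaled system near its initial value $(0,\pm 1)$ (this is Lemma~\ref{lem: small time est for F}), whereas your explicit decomposition of $\mathrm{e}^{fz}\cos(gz)-1-fz$ into a dominant $\cos(\pi\alpha/2)|g|^\alpha$ piece plus $O(c'|y|^\alpha)$ corrections delivers the same estimate more directly. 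From there the two arguments diverge. The paper extends the bound on $f$ to macroscopic times by comparing $F$ with the explicit one-dimensional ODE $\partial_s\bar F=\varkappa\bar F+(-\bar F)^\alpha$, and then integrates over $[t_1|y|^{1-\alpha},t]$ with a change of variables $z=\eta(s)$, splitting into the cases $b>0$ and $b=0$. You instead exploit that the integrand in \eqref{eq:00} is nonpositive, so restricting to the short window $[s^\ast,c_0|y|^{1-\alpha}]$ already suffices; there $|f|\ge M_0$, so \eqref{eq: condition for immigraion} applies with $\xi=|f|$ and integrating $s^\vartheta$ yields $|y|^{\alpha\vartheta+(1-\alpha)(1+\vartheta)}=|y|^{1+\vartheta-\alpha}$ without any comparison step and without separating the cases $b>0$ and $b=0$. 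The paper's route, in exchange, produces a pointwise control of $f(s,\mathrm{i}y)$ valid for all $s\ge t_1|y|^{1-\alpha}$, which is more than the proposition requires but may be of independent interest.
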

Below we first prove Theorem \ref{th: regularity in one dimension}
and then Proposition \ref{prop:00}.
\begin{proof}[Proof of Theorem \ref{th: regularity in one dimension}]
Let $t>0$ be fixed and choose $t_0 \in (0,t)$.
Note that for $u\in\mathbb{R}$, we have $f(t,\mathrm{i}u)\le0$ and
\begin{align}
\Re(\phi(t,\mathrm{i}u)) & =b\int_{0}^{t}f(s,\mathrm{i}u)ds+\int_{0}^{t}\int_{0}^{\infty}\left(\mathrm{e}^{zf(s,\mathrm{i}u)}\cos(zg(s,\mathrm{i}u))-1\right)\nu(\mathrm{d}z)\nonumber \\
 & \le b\int_{0}^{t}f(s,\mathrm{i}u)ds+\int_{0}^{t}\int_{0}^{\infty}\left(\mathrm{e}^{zf(s,\mathrm{i}u)}-1\right)\nu(\mathrm{d}z).\label{eq: real part for psi}
\end{align}
By (\ref{eq: real part for psi}) and Proposition \ref{prop:00},
there exist constants $M,C_{1},C_{2}>0$ such that for all $|u|\ge M$
and $t\ge t_{0}$,
\begin{align}
\left|\E[\mathrm{e}^{\mathrm{i}uX^{x}(t)}]\right| & =\left|\mathrm{e}^{\phi(t,\mathrm{i}u)+x\psi(t,\mathrm{i}u)}\right|\nonumber \\
 & =\mathrm{e}^{\Re(\phi(t,\mathrm{i}u))}\mathrm{e}^{xf(t,\mathrm{i}u)}\nonumber \\
 & \le\mathrm{e}^{\Re(\phi(t,\mathrm{i}u))}\le\exp\left\{ -C_{1}|u|^{1+\vartheta-\alpha}+C_{2}\right\} .\label{eq: uniform esti for char func}
\end{align}
Hence
\begin{align*}
\int_{-\infty}^{\infty}|u|^{p}\left|\E[\mathrm{e}^{\mathrm{i}uX^{x}(t)}]\right|\mathrm{d}u & <\infty
\end{align*}
for all $p\geq0$. So $P_{t}(x,\mathrm{d}y)=p_{t}(x,y)\mathrm{d}y$,
where $p_{t}(x,y)$ is given by
\begin{align}
p_{t}(x,y) & =\frac{1}{2\pi}\int_{-\infty}^{\infty}\mathrm{e}^{-\mathrm{i}yu}\E[\mathrm{e}^{\mathrm{i}uX^{x}(t)}]\mathrm{d}u\nonumber \\
 & =\frac{1}{2\pi}\int_{-\infty}^{\infty}\mathrm{e}^{-\mathrm{i}yu}\mathrm{e}^{\phi(t,\mathrm{i}u)+x\psi(t,\mathrm{i}u)}\mathrm{d}u.\label{eq: respresetnation for pt}
\end{align}
It is clear that the integrand in (\ref{eq: respresetnation for pt})
is jointly continuous in $(t,x,y)\in(0,\infty)\times[0,\infty)^{2}$,
and in view of the estimate (\ref{eq: uniform esti for char func})
we may apply dominated convergence to find that $p_{t}(x,y)$ is also
jointly continuous in $(t,x,y)$.
Using formula (6.16) in the proof of  \cite[Proposition 6.1]{DFS03} we find a constant $C = C_t > 0$ such that $|\psi(t,\mathrm{i}u)| \leq C(1+|u|)$, $u \in \mathbb{R}$.
Hence using \eqref{eq: uniform esti for char func} we may differentiate under the integral in \eqref{eq: respresetnation for pt} and find that $(x,y) \longmapsto p_{t}(x,y)$ is smooth with all derivatives being  bounded. The assertion is proved.
\end{proof}
The rest of this section is devoted to the
proof of Proposition \ref{prop:00}. For the proof we use some ideas
taken from \cite{FMS13}. Namely, for $y\in\R$ with $|y|\neq0$,
introduce
\[
\begin{cases}
F(t,y):=\frac{1}{|y|}f\left(\frac{t}{|y|^{\alpha-1}},\mathrm{i}y\right), & t\ge0,\\
G(t,y):=\frac{1}{|y|}g\left(\frac{t}{|y|^{\alpha-1}},\mathrm{i}y\right), & t\ge0.
\end{cases}
\]
Using the substitution $z\longmapsto|y|z$ shows that $(F,G)$ solve
\[
\begin{cases}
\partial_{t}F=\beta\frac{F}{|y|^{\alpha-1}}+\int_{0}^{\infty}\left(\mathrm{e}^{Fz}\cos(Gz)-1-Fz\right)\mu_{\alpha}(\mathrm{d}z), & F(0,y)=0,\\
\partial_{t}G=\beta\frac{G}{|y|^{\alpha-1}}+\int_{0}^{\infty}\left(\mathrm{e}^{Fz}\sin(Gz)-Gz\right)\mu_{\alpha}(\mathrm{d}z), & G(0,y)=\frac{y}{|y|}.
\end{cases}
\]
We first prove the following lemma. \begin{lem} \label{lem: small time est for F}
There exist constants $t_{0},\delta>0$ and $M>1$ such that for all
$t\in(0,t_{0}]$ and $|y|\ge M$,
\[
F(t,y)\le-\delta t.
\]
\end{lem}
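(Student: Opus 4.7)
The plan is to exploit that at $t=0$ one has $F(0,y)=0$ and $G(0,y)=\mathrm{sign}(y)\in\{-1,+1\}$, so that the $\beta F/|y|^{\alpha-1}$--term vanishes and, using $\cos(-z)=\cos z$,
\[
\partial_{t}F(0,y)\;=\;\int_{0}^{\infty}(\cos z-1)\,\mu_{\alpha}(\mathrm{d}z)\;=:\;-2\delta
\]
is a strictly negative constant \emph{independent of $y$}: the integral is finite because $1-\cos z=O(z^{2})$ near $0$ and is bounded at infinity while $\alpha\in(1,2)$, and it is strictly positive because $1-\cos z>0$ a.e. The goal is then to upgrade this pointwise initial negativity to a uniform upper bound $\partial_{t}F(s,y)\le-\delta$ on $[0,t_{0}]\times\{|y|\ge M\}$, after which integration in $s$ yields $F(t,y)\le-\delta t$.

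To do this I would first establish \emph{a priori Lipschitz-in-$t$ control} of $(F,G)$, uniform in $y$ for $|y|\ge 1$. Using $F\le 0$ one has the elementary estimate
\[
|e^{Fz}\cos(Gz)-1-Fz|\;\le\;\tfrac{1}{2}(F^{2}+G^{2})z^{2}\,\mathbbm{1}_{\{z\le 1\}}+(2+|F|z)\,\mathbbm{1}_{\{z>1\}},
\]
and an analogous bound for $|e^{Fz}\sin(Gz)-Gz|$; both are integrable against $\mu_{\alpha}$ precisely because $\alpha\in(1,2)$, the $z^{2}$-decay being needed near the singularity at $0$ and the linear-in-$z$ bound at infinity being needed to absorb the $-Fz$, $-Gz$ terms. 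Together with $|\beta|/|y|^{\alpha-1}\le|\beta|$ on $|y|\ge 1$, a short Gronwall argument on the ODE system produces $\tau,C>0$ independent of $y$ such that
\[
|F(t,y)|\le Ct,\qquad |G(t,y)-\mathrm{sign}(y)|\le Ct,\qquad t\in[0,\tau],\ |y|\ge 1.
\]

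Second, I would promote this into \emph{continuity of the right-hand side} of the $F$-equation at the initial state: as $(t,1/|y|^{\alpha-1})\to(0,0)$ with $|y|\ge 1$, the previous step gives $F(t,y)\to 0$, $G(t,y)-\mathrm{sign}(y)\to 0$, and the drift term $\beta F/|y|^{\alpha-1}\to 0$; the dominating function just obtained allows an application of dominated convergence, and because $\cos$ is even the pointwise limit $\cos(G(t,y)z)\to\cos z$ is insensitive to the sign of $y$, yielding $\partial_{t}F(t,y)\to -2\delta$. Choosing $t_{0}\in(0,\tau]$ and $M\ge 1$ so that $\partial_{t}F(s,y)\le -\delta$ for $s\in[0,t_{0}]$ and $|y|\ge M$, and integrating from $0$ to $t\le t_{0}$ gives the claim.

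The main technical point is the dominated convergence bookkeeping near $z=0$: since $\mu_{\alpha}(\mathrm{d}z)\sim z^{-1-\alpha}\mathrm{d}z$ with $\alpha\in(1,2)$, one must genuinely exploit the quadratic cancellation in $e^{Fz}\cos(Gz)-1-Fz$ (not just a crude $|F|z$-bound) in order to obtain an $L^{1}(\mu_{\alpha})$-dominant; the uniformity in $y$, by contrast, comes ``for free'' from the scaling defining $F$ and $G$, since $y$ enters only through the vanishing factor $1/|y|^{\alpha-1}$ and through the $\pm 1$-valued boundary value $\mathrm{sign}(y)$, which is invisible to the even function $\cos$.
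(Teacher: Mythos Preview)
Your proposal is correct and follows essentially the same approach as the paper: both arguments observe that $\partial_t F(0,y)=\int_0^\infty(\cos z-1)\,\mu_\alpha(\mathrm{d}z)$ is a strictly negative constant independent of $y$, then use continuity of the vector field to keep $\partial_t F\le-\delta$ for small $t$ and large $|y|$, and finally integrate. The only cosmetic difference is packaging: the paper sets up an explicit trapping box $D=[-a,0]\times[1-a,1+a]$ in which the velocity is bounded and $\partial_t F\le-\delta$, while you obtain the same confinement via an a priori bound $|F|+|G-\mathrm{sign}(y)|\le Ct$ and then apply dominated convergence---your ``Gronwall'' step is in fact the same bootstrap/trapping argument, since the nonlinear right-hand side must first be restricted to a compact $(F,G)$-region before a uniform bound can be extracted.
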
 \begin{proof} Note that $\partial_{t}F(0,y)=\int_{0}^{\infty}\left(\cos(z)-1\right)\mu_{\alpha}(\mathrm{d}z)<0$
and
\[
\partial_{t}G(0,y)=\begin{cases}
\frac{\beta}{|y|^{\alpha-1}}+\int_{0}^{\infty}\left(\sin(z)-z\right)\mu_{\alpha}(\mathrm{d}z), & y>0,\\
-\frac{\beta}{|y|^{\alpha-1}}-\int_{0}^{\infty}\left(\sin(z)-z\right)\mu_{\alpha}(\mathrm{d}z), & y<0.
\end{cases}
\]
Without loss of generality we suppose $y>0$, which implies $G(0,y)=1$.

By continuity, we find $a>0$ small enough and $M>0$ large enough
such that for all $(F,G)\in D=[-a,0]\times[1-a,1+a]$ and all $|y|\ge M$,
\begin{equation}
-2\delta\le\beta\frac{F}{|y|^{\alpha-1}}+\int_{0}^{\infty}\left(\mathrm{e}^{Fz}\cos(Gz)-1-Fz\right)\mu_{\alpha}(\mathrm{d}z)\le-\delta\label{eq: esti for F}
\end{equation}
and
\begin{equation}
\left|\beta\frac{G}{|y|^{\alpha-1}}+\int_{0}^{\infty}\left(\mathrm{e}^{Fz}\sin(Gz)-Gz\right)\mu_{\alpha}(\mathrm{d}z)\right|\le K,\label{eq: esti for G}
\end{equation}
where $\delta,K>0$ are constants.

Starting from $(0,1)$, the solution $(F(t,y),G(t,y))$ will stay
within $D$ for some positive time, since the velocity vector field
is bounded in $D$. More precisely, let
\[
t_{0}:=\frac{a}{\sqrt{4\delta^{2}+K^{2}}}>0,
\]
then \eqref{eq: esti for F} and \eqref{eq: esti for G} imply that
for $t\in(0,t_{0}]$ and $|y|\ge M$,
\[
\left(F(t,y),G(t,y)\right)\in D
\]
and thus
\[
F(t,y)=\int_{0}^{t}\partial_{s}F(s,y)\mathrm{d}s\le-\int_{0}^{t}\delta\mathrm{d}s=-\delta t.
\]
The lemma is proved. \end{proof}

We are now prepared to provide a full proof of Proposition \ref{prop:00}.
\begin{proof}[Proof of Proposition \ref{prop:00}] Let $T>1$
be such that $T^{-1}<t_{0}<T$. In the following we first prove that
there exist constants $K,C_{1},C_{2}>0$ such that for all $|y|\geq K$
and $t\in[T^{-1},T]$,
\begin{equation}
b\int_{0}^{t}f(s,\mathrm{i}y)\mathrm{d}s+\int_{0}^{t}\int_{0}^{\infty}\left(\mathrm{e}^{zf(s,\mathrm{i}y)}-1\right)\nu(\mathrm{d}z)\leq-C_{1}|y|^{1+\vartheta-\alpha}+C_{2}.\label{eq: second uniform esti for phi}
\end{equation}
Define
\[
\tilde{\beta}:=\begin{cases}
\beta, & \mbox{if \ }\beta<0,\\
-1, & \mbox{if \ }\beta\ge0.
\end{cases}
\]
Using that $\cos(Gz)\leq1$ combined with
\[
\int_{0}^{\infty}\left(e^{Fz}-1-Fz\right)\mu_{\alpha}(\mathrm{d}z)=(-F)^{\alpha},
\]
we find that $F(s,y)$ satisfies
\begin{equation}
\begin{cases}
\partial_{s}F\le\tilde{\beta}\frac{F}{|y|^{\alpha-1}}+(-F)^{\alpha}, & s\ge t_{1},\\
F(t_{1},y)\le-\rho,
\end{cases}\label{eq: comparison for F}
\end{equation}
for all $|y|\ge M>1$. Here $t_{1},\rho,M>0$ are constants whose
existence is guaranteed by Lemma \ref{lem: small time est for F},
and $t_{1}$ can actually be made arbitrarily small such that
\begin{equation}
t_{1}<T^{-1}.\label{eq: choice of t_1}
\end{equation}
Since for $\varkappa\in\R$ the solution to
\[
\partial_{s}\bar{F}=\varkappa\bar{F}+(-\bar{F})^{\alpha},\quad\bar{F}(0)=-\rho
\]
is given by
\[
\bar{F}(s)=-\left(\left(\rho^{1-\alpha}-\varkappa^{-1}\right)e^{-\varkappa(\alpha-1)s}+\varkappa^{-1}\right)^{\frac{1}{1-\alpha}},
\]
by comparison theorem for 1-dimensional ODEs, we obtain
\[
F(s,y)\le-\left(\left(\rho^{1-\alpha}-\frac{|y|^{\alpha-1}}{\tilde{\beta}}\right)\exp\left(\frac{\tilde{\beta}(1-\alpha)}{|y|^{\alpha-1}}\left(s-t_{1}\right)\right)+\frac{|y|^{\alpha-1}}{\tilde{\beta}}\right)^{\frac{1}{1-\alpha}},\quad s\ge t_{1}.
\]
So
\begin{align}
 & f(s,\mathrm{i}y)\nonumber \\
 & \quad=|y|F(|y|^{\alpha-1}s,y)\nonumber \\
 & \quad\le-|y|\left(\left(\rho^{1-\alpha}-\frac{|y|^{\alpha-1}}{\tilde{\beta}}\right)\exp\left(\tilde{\beta}(1-\alpha)\left(s-\frac{t_{1}}{|y|^{\alpha-1}}\right)\right)+\frac{|y|^{\alpha-1}}{\tilde{\beta}}\right)^{\frac{1}{1-\alpha}},\label{eq:esti f(s,iy)}
\end{align}
whenever $s\ge t_{1}/|y|^{\alpha-1}$.

``Case 1'': Suppose $b>0$. Without loss of generality assume $b=1$.
Note that \eqref{eq: choice of t_1} holds. For $|y|\ge M>1$ and
$t\in[T^{-1},T]$, we have
\begin{align*}
 & \int_{0}^{t}f(s,\mathrm{i}y)ds\\
 & \quad\le\int_{t_{1}/|y|^{\alpha-1}}^{t}f(s,\mathrm{i}y)\mathrm{d}s\\
 & \quad\le-|y|\int_{\frac{t_{1}}{|y|^{\alpha-1}}}^{t}\left(\left(\rho^{1-\alpha}-\frac{|y|^{\alpha-1}}{\tilde{\beta}}\right)\exp\left(\tilde{\beta}(1-\alpha)\left(s-\frac{t_{1}}{|y|^{\alpha-1}}\right)\right)+\frac{|y|^{\alpha-1}}{\tilde{\beta}}\right)^{\frac{1}{1-\alpha}}\mathrm{d}s.
\end{align*}
Set
\begin{equation}
\eta(s):=\left(\left(\rho^{1-\alpha}-\frac{|y|^{\alpha-1}}{\tilde{\beta}}\right)\exp\left(\tilde{\beta}(1-\alpha)\left(s-\frac{t_{1}}{|y|^{\alpha-1}}\right)\right)+\frac{|y|^{\alpha-1}}{\tilde{\beta}}\right)^{\frac{1}{1-\alpha}}>0.\label{eq: defi of eta}
\end{equation}
Then
\begin{align*}
\eta'(s) & =\tilde{\beta}\left(\rho^{1-\alpha}-\frac{|y|^{\alpha-1}}{\tilde{\beta}}\right)\exp\left(\tilde{\beta}(1-\alpha)\left(s-\frac{t_{1}}{|y|^{\alpha-1}}\right)\right)\\
 & \qquad\quad\cdot\left(\left(\rho^{1-\alpha}-\frac{|y|^{\alpha-1}}{\tilde{\beta}}\right)\exp\left(\tilde{\beta}(1-\alpha)\left(s-\frac{t_{1}}{|y|^{\alpha-1}}\right)\right)+\frac{|y|^{\alpha-1}}{\tilde{\beta}}\right)^{\frac{\alpha}{1-\alpha}}\\
 & =\tilde{\beta}\eta(s)^{\alpha}\left(\eta(s)^{1-\alpha}-\frac{|y|^{\alpha-1}}{\tilde{\beta}}\right).
\end{align*}
Therefore, substituting $s\to\eta(s)=z$ yields
\begin{align}
\int_{0}^{t}f(s,\mathrm{i}y)\mathrm{d}s & \le-|y|\int_{\eta(t_{1}/|y|^{\alpha-1})}^{\eta(t)}\frac{z}{\tilde{\beta}z^{\alpha}\left(z^{1-\alpha}-\frac{|y|^{\alpha-1}}{\tilde{\beta}}\right)}\mathrm{d}z\nonumber \\
 & =-|y|\int_{\rho}^{\eta(t)}\frac{1}{\left(\tilde{\beta}-|y|^{\alpha-1}z^{\alpha-1}\right)}\mathrm{d}z\nonumber \\
 & =-\int_{|y|\eta(t)}^{|y|\rho}\frac{1}{z^{\alpha-1}-\tilde{\beta}}\mathrm{d}z\le-\frac{|y|\left(\rho-\eta(t)\right)}{\left(|y|\rho\right)^{\alpha-1}-\tilde{\beta}}.\label{eq: first esti for integral f(s)}
\end{align}
Note that $\tilde{\beta}<0$ and the function
\begin{align*}
 & \left(\left(\rho^{1-\alpha}-\frac{r^{\alpha-1}}{\tilde{\beta}}\right)\exp\left(\tilde{\beta}(1-\alpha)\left(t-\frac{t_{1}}{r^{\alpha-1}}\right)\right)+\frac{r^{\alpha-1}}{\tilde{\beta}}\right)^{\frac{1}{1-\alpha}}\\
 & \quad=\left(\rho^{1-\alpha}\exp\left(\tilde{\beta}(1-\alpha)\left(t-\frac{t_{1}}{r^{\alpha-1}}\right)\right)-\frac{r^{\alpha-1}}{\tilde{\beta}}\left(\exp\left(\tilde{\beta}(1-\alpha)\left(t-\frac{t_{1}}{r^{\alpha-1}}\right)\right)-1\right)\right)^{\frac{1}{1-\alpha}}
\end{align*}
is monotone increasing in $r\in(0,\infty)$. Therefore, for $|y|\ge M$
and $t\in[T^{-1},T]$, we obtain
\begin{align}
\rho & \ge\rho-\eta(t)\nonumber \\
 & \ge\rho-\left(\left(\rho^{1-\alpha}-\frac{M^{\alpha-1}}{\tilde{\beta}}\right)\exp\left(\tilde{\beta}(1-\alpha)\left(t-\frac{t_{1}}{M^{\alpha-1}}\right)\right)+\frac{M^{\alpha-1}}{\tilde{\beta}}\right)^{\frac{1}{1-\alpha}}\nonumber \\
 & \ge\rho-\left(\left(\rho^{1-\alpha}-\frac{M^{\alpha-1}}{\tilde{\beta}}\right)\exp\left(\tilde{\beta}(1-\alpha)\left(T^{-1}-\frac{t_{1}}{M^{\alpha-1}}\right)\right)+\frac{M^{\alpha-1}}{\tilde{\beta}}\right)^{\frac{1}{1-\alpha}}\nonumber \\
 & =:c_{1}>0.\label{eq: define c_1}
\end{align}
Combining \eqref{eq: first esti for integral f(s)} and \eqref{eq: define c_1}
gives
\begin{align*}
\int_{0}^{t}f(s,\mathrm{i}y)\mathrm{d}s & \le-\frac{c_{1}|y|}{\left(|y|\rho\right)^{\alpha-1}-\tilde{\beta}}.
\end{align*}
Obviously, we can choose a larger $M'>M$ such that
\[
\int_{0}^{t}f(s,\mathrm{i}y)\mathrm{d}s\le-\frac{c_{1}|y|}{2\left(|y|\rho\right)^{\alpha-1}}\le-c_{2}|y|^{2-\alpha},\quad\forall\ |y|\ge M',\ t\in[T^{-1},T].
\]

``Case 2'': Suppose $b=0$. Similarly as in Case 1, by (\ref{eq: condition for immigraion}), (\ref{eq: real part for psi}) and (\ref{eq:esti f(s,iy)}), we obtain,
for $|y|\ge M>1$ and $t\in[T^{-1},T]$,
\begin{align*}
 & \int_{0}^{t}\int_{0}^{\infty}\left(e^{zf(s,\mathrm{i}y)}-1\right)\nu(\mathrm{d}z)\textrm{d}s\\
 & \quad=-\int_{0}^{t}\int_{0}^{\infty}\left(1-e^{zf(s,\mathrm{i}y)}\right)\nu(\mathrm{d}z)\textrm{d}s\\
 & \quad\le-\int_{\frac{t_{1}}{|y|^{\alpha-1}}}^{t}\left[c_{3}\left(-f(s,\mathrm{i}y)\right)^{\vartheta}-c_{4}\right]\mathrm{d}s\\
 & \quad\le c_{4}t-c_{3}|y|^{\vartheta}\int_{\frac{t_{1}}{|y|^{\alpha-1}}}^{t}\left(\left(\rho^{1-\alpha}-\frac{|y|^{\alpha-1}}{\tilde{\beta}}\right)\exp\left(\tilde{\beta}(1-\alpha)\left(s-\frac{t_{1}}{|y|^{\alpha-1}}\right)\right)+\frac{|y|^{\alpha-1}}{\tilde{\beta}}\right)^{\frac{\vartheta}{1-\alpha}}\mathrm{d}s,
\end{align*}
where we have used \eqref{eq: condition for immigraion} so that
$\int_0^{\infty}(1 - \mathrm{e}^{-\xi z})\nu(\mathrm{d}z) \geq c_3\xi^{\vartheta} - c_4$
for all $\xi \geq 0$ and some constants $c_3,c_4 > 0$.
Using again the change of variables $z=\eta(s)$, where $\eta$ is
defined in \eqref{eq: defi of eta}, we get
\begin{align*}
 & \int_{0}^{t}\int_{0}^{\infty}\left(e^{zf(s,\mathrm{i}y)}-1\right)\nu(\mathrm{d}z)\textrm{d}s\\
 & \quad\le c_{4}t-c_{3}\int_{|y|\eta(t)}^{|y|\rho}\frac{z^{\vartheta-1}}{z^{\alpha-1}-\tilde{\beta}}\mathrm{d}z\\
 & \quad\le c_{4}T-\frac{c_{3}|y|\left(|y|\rho\right)^{\vartheta-1}}{\left(|y|\rho\right)^{\alpha-1}-\tilde{\beta}}\left(\rho-\eta(t)\right)\\
 & \quad\overset{(\ref{eq: define c_1})}{\le}c_{4}T-\frac{c_{1}c_{3}|y|\left(|y|\rho\right)^{\vartheta-1}}{\left(|y|\rho\right)^{\alpha-1}-\tilde{\beta}}\le-c_{5}|y|^{1+\vartheta-\alpha}+c_{6},\quad\forall\ |y|\ge M'',\ t\in[T^{-1},T],
\end{align*}
where $M''>M$ is another large enough constant.

Summarizing Case 1 and 2, and noting that $2-\alpha\ge1+\vartheta-\alpha$,
we obtain \eqref{eq: second uniform esti for phi}. Now, for $t>T$
and $|y|\ge K$, it holds also that
\begin{align*}
 & b\int_{0}^{t}f(s,\mathrm{i}y)\mathrm{d}s+\int_{0}^{t}\int_{0}^{\infty}\left(\mathrm{e}^{zf(s,\mathrm{i}y)}-1\right)\nu(\mathrm{d}z)\\
 & \quad\le b\int_{0}^{T}f(s,\mathrm{i}y)\mathrm{d}s+\int_{0}^{T}\int_{0}^{\infty}\left(\mathrm{e}^{zf(s,\mathrm{i}y)}-1\right)\nu(\mathrm{d}z)\\
 & \quad\le-C_{1}|y|^{1+\vartheta-\alpha}+C_{2}.
\end{align*}
The proposition is proved. \end{proof}

\section{Existence and regularity of the heat kernel}

\subsection{Heat kernel and anisotropic Besov regularity}

In order to measure anisotropic smoothness related to the cylindrical
L\'evy process $Z=(Z_{1},\dots,Z_{m})$, we use an anisotropic analogue
of classical Besov spaces. Corresponding to the regularity indices
$(\alpha_{1},\dots,\alpha_{m})$ we define a mean order of smoothness
$\overline{\alpha}>0$ and an anisotropy $a=(a_{1},\dots,a_{m})$
by
\begin{align}
\frac{1}{\overline{\alpha}}=\frac{1}{m}\left(\frac{1}{\alpha_{1}}+\dots+\frac{1}{\alpha_{m}}\right),\qquad a_{i}=\frac{\overline{\alpha}}{\alpha_{i}},\ \ i=1,\dots,m.\label{MAIN:04}
\end{align}
Then note that $0<a_{1},\dots,a_{m}<\infty$ and $a_{1}+\dots+a_{m}=m$.
Take $\lambda>0$ with $\lambda/a_{k}\in(0,1)$ for all $k\in\{1,\dots,m\}$.
For a measurable function $f:\R^{m}\longrightarrow\R$ introduce
\begin{align}
\|f\|_{B_{1,\infty}^{\lambda,a}}:=\|f\|_{L^{1}(\R^{m})}+\sum\limits _{k=1}^{m}\sup\limits _{h\in[-1,1]}|h|^{-\lambda/a_{k}}\|\Delta_{he_{k}}f\|_{L^{1}(\R^{m})},\label{EQ:36}
\end{align}
where $\Delta_{h}f(x)=f(x+h)-f(x)$, $h\in\R^{m}$. The anisotropic
Besov space $B_{1,\infty}^{\lambda,a}(\R^{m})$ is defined as the
set of all $L^{1}(\R^{m})$ functions $f$ with $\|f\|_{B_{1,\infty}^{\lambda,a}}<\infty$
(see \cite{D03} and \cite{T06} for additional details and references).
By studying estimates on the heat kernel weighted by
\[
\rho_{\delta}(x)=\min\{\delta,x_{1}^{1/\alpha_{1}},\dots,x_{m}^{1/\alpha_{m}}\}\mathbbm{1}_{\R_{+}^{m}}(x),\qquad\delta\in(0,1],
\]
we can also take the behavior of the process at the boundary into
account. The following is our main result for the regularity of the heat kernel.
\begin{Theorem}\label{th:04}
Suppose that condition (A) is satisfied
and assume there exists $\tau>0$ satisfying
\[
\int_{\R_{+}^{m}}\mathbbm{1}_{\{|z|>1\}}|z|^{1+\tau}\nu(\mathrm{d}z)<\infty.
\]
Then for each $t>0$ and each $x\in\R_{+}^{m}$ the transition kernel
$P_{t}(x,\mathrm{d}y)$ has density $p_{t}(x,y)$ with respect to
the Lebesgue measure. Moreover, there exists some
small constant $\lambda>0$ such that for each $T>0$, $\varkappa\in(0,1]$
and $\delta\in(0,1]$,
\begin{equation}
\|p_{t}^{\delta}(x,\cdot)\|_{B_{1,\infty}^{\lambda,a}(\R_{+}^{m})}\leq C(1+|x|)^{\varkappa}(1\wedge t)^{-1/\alpha_{\mathrm{min}}},\quad t\in(0,T],\ x\in\R_{+}^{m},\label{EQA:00}
\end{equation}
where $p_{t}^{\delta}(x,y):=\rho_{\delta}(y)p_{t}(x,y)$ and $C=C(\lambda,\tau,\varkappa,\delta, T)>0$
is a constant.
\end{Theorem}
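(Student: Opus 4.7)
The plan is to execute the short-time approximation programme announced in the introduction, combining an Euler-type discretisation of \eqref{eq: a roots model} with a one-step discrete integration by parts in the spirit of \cite{DF13, R17, FJR18, FJR18a}, while handling the boundary degeneracy of the coordinate volatilities by means of the weight $\rho_{\delta}$ and condition (A). Fix $t\in(0,T]$, $h\in(0,t)$ and $\tau_h=t-h$, and introduce the \emph{frozen} approximation $Y^{x,h}$ which coincides with $X^x$ on $[0,\tau_h]$ and on $[\tau_h,t]$ is driven by the same $Z$, $J$ with the state-dependent coefficients frozen at $X^x(\tau_h)$. Conditionally on $\mathcal{F}_{\tau_h}$, the $k$-th component of $Y^{x,h}(t)-X^x(\tau_h)$ is, up to a deterministic drift and the contribution of $J$, equal to $(\sigma_k X_k^x(\tau_h))^{1/\alpha_k}\bigl(Z_k(t)-Z_k(\tau_h)\bigr)$, and therefore admits a smooth density obtained by scaling the one-dimensional $\alpha_k$-stable density $\phi_{\alpha_k}$ at scale $h^{1/\alpha_k}$.

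For the anisotropic Besov bound I would test $p_t^\delta(x,\cdot+he_k)-p_t^\delta(x,\cdot)$ against a bounded measurable $f$: conditioning on $\mathcal{F}_{\tau_h}$ and transferring the discrete increment $\Delta_{he_k}$ onto $\phi_{\alpha_k}$ in the conditional density yields a factor $|h|^{\lambda/a_k}$ via the elementary $L^1$-H\"older bound $\|\phi_{\alpha_k}(\cdot+v)-\phi_{\alpha_k}\|_{L^1(\R)}\leq C|v|^{\lambda/a_k}$ (valid for $\lambda$ small enough), at the price of a multiplicative cost $(\sigma_k X_k^x(\tau_h))^{-\lambda/(\alpha_k a_k)}$. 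The crucial cancellation is that on $\{X_k^x(\tau_h)\leq\delta^{\alpha_k}\}$ the target-side weight $\rho_\delta(Y^{x,h}(t))$ is of order $X_k^x(\tau_h)^{1/\alpha_k}+O(h^{1/\alpha_k})$, which absorbs the inverse factor if $\lambda$ is chosen sufficiently small, while on $\{X_k^x(\tau_h)>\delta^{\alpha_k}\}$ the cost factor is directly dominated by $\delta^{-\lambda/a_k}$. Summing the $m$ coordinate contributions gives the $(1\wedge t)^{-1/\alpha_{\min}}$ in \eqref{EQA:00}, and the prefactor $(1+|x|)^\varkappa$ arises from an elementary moment estimate on $X^x(\tau_h)$ based on the extra hypothesis $\int_{|z|>1}|z|^{1+\tau}\nu(\mathrm{d}z)<\infty$.

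The density $p_t(x,y)$ itself, together with \eqref{EQA:00}, is then recovered by letting $h\to 0$: the uniform Besov bound yields tightness of $\{\mathrm{law}(Y^{x,h}(t))\}_h$ in the weighted $L^1$ topology, and any accumulation point must coincide with $P_t(x,\cdot)$ by the strong convergence $Y^{x,h}\to X^x$ on $[0,t]$. The hardest step, and where the proof pays its real debt to the boundary analysis, is to make the above transfer rigorous uniformly in $h$. Concretely one has to establish a non-extinction moment bound of the form
\[
 \sup_{s\in[h,t]}\sup_{x\in\R_+^m}\E\!\left[(X_k^x(s))^{-q}\mathbbm{1}_{\{X_k^x(s)\leq \delta^{\alpha_k}\}}\right]<\infty
\]
for some small $q=q(\lambda,\alpha_k)>0$ and each $k\in\{1,\dots,m\}$. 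This is precisely the coordinate-wise analogue of the estimates carried out in Section 3 for the one-dimensional stable JCIR, and is where the strict inequality $\vartheta_k>\alpha_k-1$ from condition (A) is used, via the generalised Riccati system \eqref{eq:riccati} and the Laplace transform representation \eqref{affine property} for the marginal of $X_k^x(s)$.
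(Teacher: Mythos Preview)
Your outline follows the right programme (Euler-type freezing plus a discrete integration by parts against the stable density), but it diverges from the paper's proof in two places, and in one of them you create a difficulty that the paper avoids entirely.

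\medskip

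\textbf{Handling of the weight.} You keep the weight $\rho_\delta$ at the terminal time, write $\rho_\delta(Y^{x,h}(t))\approx X_k^x(\tau_h)^{1/\alpha_k}+O(h^{1/\alpha_k})$, and then multiply by the cost factor $(\sigma_k X_k^x(\tau_h))^{-\lambda/(\alpha_k a_k)}$ coming from the integration by parts. The cross term $O(h^{1/\alpha_k})\cdot X_k^x(\tau_h)^{-\lambda/(\alpha_k a_k)}$ forces you into the negative-moment estimate $\sup_s \E[(X_k^x(s))^{-q}]<\infty$, which you single out as ``the hardest step''. The paper simply does not need this. Its decomposition (Proposition \ref{PROP:01}) first pays an error term $R_1$ to move the weight from $\rho_\delta(X(t))$ to $\rho_\delta(X(t-\e))$; once the weight sits at the \emph{same} time as the frozen coefficient, the pointwise inequality $\rho_\delta(x)\,x_i^{-1/\alpha_i}\le 1$ cancels the inverse factor exactly, with no expectation and no condition (A) needed at this stage. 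Condition (A) enters only at the very end, via Proposition \ref{th:boundary behavior}, to show $P_t(x,\partial\R_+^m)=0$ and hence that the singular part of $P_t(x,\cdot)$ vanishes.

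\medskip

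\textbf{Test functions and the passage to the limit.} You test against bounded measurable $f$ and then propose a tightness argument in a weighted $L^1$ topology as $h\to 0$. The paper instead tests against $\phi\in C_b^{\eta,a}(\R^m)$ and invokes Lemma \ref{LEMMA:02}, which converts the bound
\[
 \bigl|\E[\rho_\delta(X(t))\Delta_{he_i}\phi(X(t))]\bigr|\le A\,\|\phi\|_{C_b^{\eta,a}}\,|h|^{(\lambda+\eta)/a_i}
\]
directly into $G_t(x,\mathrm{d}y)=\rho_\delta(y)P_t(x,\mathrm{d}y)\in B^{\lambda,a}_{1,\infty}$ with an explicit norm bound; no compactness or limiting density argument is needed. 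The H\"older regularity of $\phi$ is not cosmetic: it is what gives a rate in the approximation error $R_2=\E[|\Delta_{he_i}\phi(X(t))-\Delta_{he_i}\phi(X^\e(t))|\rho_\delta(X(t-\e))]$ via Proposition \ref{PROP:00}(b), and in the weight-transfer term $R_1$. With merely bounded $f$ you have no such rate, and the optimisation over $\e$ (choosing $\e=|h|^{c_i}(1\wedge t)$) that produces the exponent $\lambda>0$ would collapse.

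\medskip

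In short: move the weight to time $t-\e$ \emph{before} the integration by parts, work with H\"older test functions so that the approximation error carries a rate, and apply Lemma \ref{LEMMA:02} directly to the measure $\rho_\delta(y)P_t(x,\mathrm{d}y)$. With these changes the negative-moment bound you flag as the crux becomes unnecessary.
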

The proof of this result follows the
arguments given in \cite{FJR18,FJR18a} where general stochastic equations
have been considered. Since we need the precise dependence on $x$
in \eqref{EQA:00} and since the proofs are significantly simpler
for \eqref{eq: a roots model} compared with the general case, we
provide, for convenience of the reader, a full proof of Proposition
\ref{th:04} in the appendix.

\subsection{Boundary non-attainment}

In this section we prove the following estimate on the behavior of
the anisotropic stable JCIR process at the boundary. \begin{Proposition}\label{th:boundary behavior}
Suppose that condition (A) is satisfied. Then for each $t>0$
and each $R>0$, there exists $C>0$ such that
\[
\P[\min\{X_{1}^{x}(t),\dots,X_{m}^{x}(t)\}\leq\e]\leq C\e, \qquad \e\in(0,1),\ x\in\R_{+}^{m},\ |x|\leq R.
\]
In particular, $\P[X^{x}(t)\in\R_{++}^{m}]=1$ holds for all $t>0$
and all $x\in\R_{+}^{m}$. \end{Proposition}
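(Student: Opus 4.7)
The plan is to bound $\P[X_k^x(t) \leq \e]$ separately for each $k = 1,\ldots,m$ by a Chernoff-type argument based on the affine representation \eqref{affine property}, and then apply a union bound to control the minimum. For any $\xi > 0$, Markov's inequality gives
\[
 \P[X_k^x(t) \leq \e] \leq \mathrm{e}^{\xi \e}\,\E[\mathrm{e}^{-\xi X_k^x(t)}] = \mathrm{e}^{\xi \e + \phi(t,-\xi e_k) + \langle x, \psi(t,-\xi e_k)\rangle}.
\]
A first observation is that $\psi(s,-\xi e_k) \in (-\infty,0]^m$ for every $s \geq 0$, which follows from the structure of the Riccati system \eqref{eq:riccati}: if $u \leq 0$ componentwise and $u_j = 0$, then $R_j(u) = \sum_{l \neq j}\beta_{lj}u_l \leq 0$, so the non-positive orthant is forward-invariant. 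Consequently $\langle x, \psi(t,-\xi e_k)\rangle \leq 0$ for every $x \in \R_+^m$, and the claim reduces to showing that $\phi(t,-\xi e_k) \to -\infty$ faster than any logarithm as $\xi \to \infty$.

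The next step is to exploit condition (A) at the level of the integrand of $\phi$. Writing $u(s) := \psi(s,-\xi e_k) \leq 0$ and using $\langle u(s),z\rangle \leq u_k(s)z_k$ for $z \in \R_+^m$ together with $b \geq 0$, I would drop the non-negative contributions from $j \neq k$ to obtain
\[
 -F(u(s)) \geq b_k(-u_k(s)) + \int_{\R_+^m}\left(1-\mathrm{e}^{u_k(s)z_k}\right)\nu(\mathrm{d}z) \geq C(-u_k(s))^{\vartheta_k}
\]
as soon as $-u_k(s) \geq M$, by \eqref{cond: A}.

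The main obstacle is a lower bound on $-u_k(s)$ valid on a time interval long enough to integrate. Here I would follow the one-dimensional strategy behind Proposition \ref{prop:00}: using $\int_0^\infty(\mathrm{e}^{u_k z}-1-u_k z)\mu_{\alpha_k}(\mathrm{d}z) = (-u_k)^{\alpha_k}$ and $\sum_{j\neq k}\beta_{jk}u_j \leq 0$ in the Riccati equation for $u_k$, the scalar function $u_k$ satisfies the differential inequality $\partial_s u_k \leq \beta_{kk}u_k + (-u_k)^{\alpha_k}$ with $u_k(0) = -\xi$. Comparison with the explicit solution of $\partial_s \bar\eta = \beta_{kk}\bar\eta - \bar\eta^{\alpha_k}$, $\bar\eta(0) = \xi$, yields $-u_k(s) \geq \xi/2$ on an initial interval of length $\sim c\xi^{1-\alpha_k}$ for $\xi$ large. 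Combined with the previous display, for all $\xi$ so large that $c\xi^{1-\alpha_k} \leq t$,
\[
 \phi(t,-\xi e_k) = \int_0^t F(u(s))\,\mathrm{d}s \leq -C_1\int_0^{c\xi^{1-\alpha_k}}(\xi/2)^{\vartheta_k}\,\mathrm{d}s \leq -C_2\,\xi^{1+\vartheta_k-\alpha_k}.
\]

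Since $\vartheta_k > \alpha_k - 1$ the exponent $1+\vartheta_k-\alpha_k$ is strictly positive. Choosing $\xi = 1/\e$ yields $\P[X_k^x(t) \leq \e] \leq \mathrm{e}\cdot \exp\bigl(-C_2\, \e^{\alpha_k-1-\vartheta_k}\bigr)$, which decays super-polynomially as $\e \downarrow 0$ and is therefore bounded by $C_k \e$ on $(0,1)$, uniformly in $x \in \R_+^m$ (so the restriction $|x|\leq R$ is immaterial). A union bound over $k$ gives the announced estimate, and letting $\e \downarrow 0$ yields $\P[X_k^x(t) = 0] = 0$ for each $k$, hence $\P[X^x(t) \in \R_{++}^m] = 1$.
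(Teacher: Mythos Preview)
Your argument is correct and in fact yields a sharper conclusion than stated: the bound $\P[X_k^x(t)\leq\e]\leq C_k\exp(-C_2\,\e^{\alpha_k-1-\vartheta_k})$ is uniform over all $x\in\R_+^m$ and decays super-polynomially, so the restriction $|x|\leq R$ is indeed unnecessary. The only small points to clean up are that the Riccati function $R_k$ carries the factor $\sigma_k$ in front of $(-u_k)^{\alpha_k}$ (this merely rescales the constant $c$ in the length $c\xi^{1-\alpha_k}$), and that the Chernoff bound with $\xi=1/\e$ applies only once $\xi$ exceeds the threshold implied by $\xi/2\geq M$ and $c\xi^{1-\alpha_k}\leq t$; for the remaining $\e\in[\e_0,1)$ the trivial bound $\P\leq 1\leq \e_0^{-1}\e$ suffices. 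You should also state explicitly that $F(u(s))\leq 0$ for all $s$ (immediate from $u(s)\leq 0$, $b\geq 0$, $z\geq 0$), so that the contribution of $\int_{c\xi^{1-\alpha_k}}^{t}F(u(s))\,\mathrm{d}s$ is non-positive and can be dropped.

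The paper proceeds quite differently. It introduces the decoupled process $Y^x$ solving \eqref{eq: a roots model} with all off-diagonal entries of $\beta$ set to zero, invokes the pathwise comparison $X_k^x(t)\geq Y_k^{x_k}(t)$ from \cite{FJR19b}, and then applies the one-dimensional smoothness result (Theorem~\ref{th: regularity in one dimension}) to each $Y_k^{x_k}$: since the heat kernel $p_t^k(x_k,y_k)$ is jointly continuous, one gets $\P[Y_k^{x_k}(t)\leq\e]=\int_0^\e p_t^k(x_k,y_k)\,\mathrm{d}y_k\leq C\e$ for $|x|\leq R$. Your approach is more self-contained: it avoids both the comparison lemma and the full strength of Theorem~\ref{th: regularity in one dimension}, working instead directly with the Riccati system and condition~(A), and it produces a bound that is uniform in $x$ and much faster than linear. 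The paper's route, on the other hand, is shorter once Theorem~\ref{th: regularity in one dimension} is in hand and makes the logical dependence between sections transparent.
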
 \begin{proof} Consider
$x=(x_{1},\dots,x_{m})\in\R_{+}^{m}$ and let $(X^{x}(t))_{t\geq0}$
be the anisotropic stable JCIR process obtained from \eqref{eq: a roots model}.
Moreover, let $Y^{x}(t)=(Y_{1}^{x_{1}}(t),\dots,Y_{m}^{x_{m}}(t))$
be the unique $\R_{+}^{m}$-valued strong solution to
\begin{align}
\mathrm{d}Y_{k}^{x_{k}}(t) & =\left(b_{k}+\beta_{kk}Y_{k}^{x_{k}}(t)\right)\mathrm{d}t+\sqrt[\alpha_{k}]{\sigma_{k}Y_{k}^{x_{k}}(t)}\mathrm{d}Z_{k}(t)+\mathrm{d}J_{k}(t),\label{eq:12}
\end{align}
where $k\in\{1,\dots,d\}$ and $Y_{k}^{x_{k}}(0)=x_{k}$. Existence
and uniqueness of such a process is again a direct consequence of
\cite{BLP15}, see also \cite{FL10}. Moreover, $(Y^{x}(t))_{t\geq0}$
is the anisotropic JCIR process where all off-diagonal drift terms
equal to zero. Using the fact that $\beta_{kj}\geq0$ whenever $k\neq j$
we may apply the comparison result established in \cite[Proposition 4.2]{FJR19b}
to deduce
\[
\P[X_{k}^{x}(t)\geq Y_{k}^{x_{k}}(t),\ \ t\geq0]=1,\qquad k\in\{1,\dots,m\}.
\]
Since $(J_{k}(t))_{t\geq0}$ is a L\'evy subordinator on $\R_{+}$ whose
L\'evy measure is given by $\nu_{k}=\nu\circ\mathrm{pr}_{k}^{-1}$,
where $\mathrm{pr}_{k}(z)=z_{k}$ denotes the projection on the $k$-th
coordinate, we can apply Theorem \ref{th: regularity in one dimension}
for the process $(Y_{k}^{x_{k}}(t))_{t\geq0}$. Let $p_{t}^{k}(x_{k},y_{k})$
be its heat kernel. Then
\begin{align*}
\P[\min\{X_{1}^{x}(t),\dots,X_{m}^{x}(t)\}\leq\e] & \leq\sum_{k=1}^{m}\P[X_{k}^{x}(t)\leq\e]\\
 & \leq\sum_{k=1}^{m}\P[Y_{k}^{x_{k}}(t)\leq\e]\\
 & =\sum_{k=1}^{m}\int_{0}^{\e}p_{t}^{k}(x_{k},y_{k})\mathrm{d}y_{k}\leq C\e,
\end{align*}
since $p_{t}^{k}(x_{k},y_{k})$ is jointly continuous in $(x_{k},y_{k})$
and $|x|\leq R$. This proves the assertion. \end{proof}

\subsection{Proof of Theorem \ref{corr L1 continuity}}

Let us first prove a slightly weaker assertion.
\begin{Lemma} \label{LEMMA:03}
Assume the same assumptions as in Theorem \ref{th:04}.
Then the mapping $\R_{+}^{m}\ni x\longmapsto p_{t}^{\delta}(x,\cdot)\in L^{1}(\R_{+}^{m})$
is continuous for each $t>0$ and each $\delta\in(0,1]$, where $p_{t}^{\delta}(x,y)=\rho_{\delta}(y)p_{t}(x,y)$.
\end{Lemma}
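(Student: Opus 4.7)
The plan is to prove continuity of $x \mapsto p_t^\delta(x,\cdot)$ in $L^1(\R_+^m)$ by a compactness-plus-uniqueness argument, combining the Besov regularity from Theorem \ref{th:04} with the Feller property of the anisotropic stable JCIR process. Fix $t>0$, $\delta \in (0,1]$, and a convergent sequence $x_n \to x$ in $\R_+^m$; since eventually $|x_n| \leq R$ for some $R>0$, all quantities below will be controlled uniformly in $n$.

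First I would verify that the family $\{p_t^\delta(x_n,\cdot)\}_n$ (extended by zero outside $\R_+^m$) is relatively compact in $L^1(\R^m)$ via the Kolmogorov--Riesz criterion. The three conditions to check are: (i) a uniform $L^1$ bound, which is immediate from $\rho_\delta \leq \delta$ and $\int p_t(x_n,y)\,\mathrm{d}y = 1$, giving $\|p_t^\delta(x_n,\cdot)\|_{L^1} \leq \delta$; (ii) uniform equicontinuity of translations, which follows directly from the Besov estimate \eqref{EQA:00} applied with $\varkappa = 1$: for any $k \in \{1,\dots,m\}$ and $|h| \leq 1$,
\[
\|\Delta_{he_k} p_t^\delta(x_n,\cdot)\|_{L^1(\R^m)} \leq C(1+R)(1\wedge t)^{-1/\alpha_{\min}}\,|h|^{\lambda/a_k},
\]
which tends to $0$ as $|h|\to 0$ uniformly in $n$; and (iii) tightness at infinity, which follows from the Feller property. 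Indeed, since $(X^x(t))_{t \geq 0}$ is Feller, $x \mapsto P_t(x,\cdot)$ is weakly continuous, so $P_t(x_n,\cdot) \to P_t(x,\cdot)$ weakly, and Prokhorov's theorem yields $\sup_n P_t(x_n,\{|y|>M\}) \to 0$ as $M \to \infty$; combined with $\rho_\delta \leq \delta$ this gives $\sup_n \int_{|y|>M} p_t^\delta(x_n,y)\,\mathrm{d}y \leq \delta \sup_n P_t(x_n,\{|y|>M\}) \to 0$.

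Next I would identify every subsequential $L^1$-limit. Suppose $p_t^\delta(x_{n_k},\cdot) \to f$ in $L^1(\R_+^m)$ along a subsequence. Then $p_t^\delta(x_{n_k},y)\,\mathrm{d}y \to f(y)\,\mathrm{d}y$ in total variation and hence weakly as finite measures. On the other hand, $\rho_\delta$ is continuous and bounded on $\R_+^m$, and $P_t(x_{n_k},\cdot) \to P_t(x,\cdot)$ weakly by the Feller property, which implies $\rho_\delta(y) P_t(x_{n_k},\mathrm{d}y) \to \rho_\delta(y) P_t(x,\mathrm{d}y)$ weakly (the continuous bounded weight preserves weak convergence). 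By uniqueness of the weak limit, $f(y)\,\mathrm{d}y = \rho_\delta(y) P_t(x,\mathrm{d}y) = p_t^\delta(x,y)\,\mathrm{d}y$, so $f = p_t^\delta(x,\cdot)$ almost everywhere. Since every subsequence has a further subsequence converging to the same limit, the full sequence converges: $p_t^\delta(x_n,\cdot) \to p_t^\delta(x,\cdot)$ in $L^1(\R_+^m)$.

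The only genuinely delicate point I anticipate is verifying tightness at infinity uniformly in $n$; everything else is either a one-line consequence of \eqref{EQA:00} or of standard weak-convergence arguments. The Feller property (already established via the general theory of affine processes cited in the introduction) is precisely what supplies this tightness and, simultaneously, identifies the unique candidate limit, so no additional moment or irreducibility input is needed for this lemma.
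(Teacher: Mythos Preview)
Your proof is correct and follows essentially the same strategy as the paper: Kolmogorov--Riesz compactness via the Besov bound \eqref{EQA:00}, followed by identification of the $L^1$-limit through the Feller/weak continuity of $x\mapsto P_t(x,\cdot)$ and the standard subsequence argument.

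The one genuine point of difference is how you verify tightness at infinity. The paper uses the first-moment estimate (Proposition~\ref{moment estimate}, available thanks to the hypothesis $\int_{|z|>1}|z|^{1+\tau}\nu(\mathrm{d}z)<\infty$) and Markov's inequality to get $\sup_{|x|\leq K}\int_{|y|\geq R}p_t^\delta(x,y)\,\mathrm{d}y\leq \delta K C_t/R$. You instead observe that the weakly convergent sequence $P_t(x_n,\cdot)$ is automatically tight by Prokhorov, which gives the same uniform tail control without invoking moments. Your route is slightly cleaner and more self-contained, relying only on the Feller property; the paper's route gives an explicit quantitative decay rate in $R$, which is not needed here but is in keeping with the quantitative flavor of the surrounding estimates. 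Both are perfectly valid.
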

\begin{proof}
Using the Feller property (see \cite[Proposition 8.2]{DFS03}),
we find that $x\longmapsto p_{t}(x,y)\mathrm{d}y$ is weakly continuous,
i.e.
\begin{align}
x\longmapsto\int_{\R_{+}^{m}}f(y)\rho_{\delta}(y)p_{t}(x,y)\mathrm{d}y\label{EQA:01}
\end{align}
is continuous for each bounded continuous function $f$.

Let $x\in\R_{+}^{m}$ be fixed. Suppose $(x_{n})_{n}$ is a sequence
such that $x_{n}\to x$. Using \eqref{EQA:00} we find that
\[
\sup_{n\in\mathbb{N}}\|p_{t}^{\delta}(x_{n},\cdot)\|_{B_{1,\infty}^{\lambda,a}}<\infty.
\]
Next observe that for each $K,R>0$,
\[
\sup\limits _{|x|\leq K}\int_{|y|\geq R}p_{t}^{\delta}(x,y)\mathrm{d}y\leq \delta\sup\limits _{|x|\leq K}\int_{|y|\geq R}p_{t}(x,y)\mathrm{d}y\leq\frac{\delta}{R}\sup\limits _{|x|\leq K}\E[|X_{t}^{x}|]\leq\frac{\delta KC_{t}}{R},
\]
where we have used Proposition \ref{moment estimate}. Hence we may
apply the Kolmogorov-Riesz compactness criterion which gives existence
of a subsequence $(x_{n_{k}})_{k}$ such that $p_{t}^{\delta}(x_{n_{k}},\cdot)$
has a limit in $L^{1}(\R_{+}^{m})$. By the weak continuity \eqref{EQA:01}
this limit is exactly $p_{t}^{\delta}(x,\cdot)$.

So for each sequence $(x_{n})_{n}$ with $x_{n}\to x$
we have found a subsequence $(x_{n_{k}})_{k}$ such that $p_{t}^{\delta}(x_{n_{k}},\cdot)\to p_{t}^{\delta}(x,\cdot)$
in $L^{1}$ as $k\to\infty$. This proves the desired $L^{1}$ continuity.
\end{proof}
We are now prepared to provide a full proof of Theorem
\ref{corr L1 continuity}. The affine structure of the anisotropic
JCIR process allows us to study first the particular case with
$\nu$ having no big jumps, i.e., $\nu(\mathrm{d}z)=\mathbbm{1}_{\{|z|\le1\}}\nu(\mathrm{d}z)$,
and then the general case by a convolution argument.
Namely, define $\nu_{0}(\mathrm{d}z)=\mathbbm{1}_{\{|z|\le1\}}\nu(\mathrm{d}z)$
and $\nu_{1}(\mathrm{d}z)=\mathbbm{1}_{\{|z|>1\}}\nu(\mathrm{d}z)$,
and write $F(u)=\langle b,u\rangle+F_{0}(u)+F_{1}(u)$ where for $i=0,1$,
\[
F_{i}(u)=\int_{\R_{+}^{m}}\left(\mathrm{e}^{\langle u,z\rangle}-1\right)\nu_{i}(\mathrm{d}z),\qquad u\in\mathbb{C}^{m}  \text{ with } \mathrm{Re}(u) \leq 0.
\]
Let $(Y^{x}(t))_{t\geq0}$ the unique strong solution to \eqref{eq: a roots model}
with $\nu=\nu_{0}$ and let $(\widetilde{Y}^{x}(t))_{t\geq0}$ be
the unique strong solution to \eqref{eq: a roots model} with $b=0$
and $\nu=\nu_{1}$, i.e.,
\begin{align*}
\E\left[\mathrm{e}^{\mathrm{i}\langle u,Y^{x}(t)\rangle}\right] & =\exp\left(\int_{0}^{t}\left(\langle b,\psi(s,\mathrm{i}u)\rangle+F_{0}(\psi(s,\mathrm{i}u))\right)\mathrm{d}s+\langle x,\psi(s,\mathrm{i}u)\rangle\right),\\
\E\left[\mathrm{e}^{\mathrm{i}\langle u,\widetilde{Y}^{x}(t)\rangle}\right] & =\exp\left(\int_{0}^{t}F_{1}(\psi(s,\mathrm{i}u))\mathrm{d}s+\langle x,\psi(s,\mathrm{i}u)\rangle\right),
\end{align*}
where $\psi$ is obtained from \eqref{eq:riccati}. Denote by $Q_{t}^{0}(x,\cdot)$
the transition probabilities of $(Y^{x}(t))_{t\geq0}$ and by $Q_{t}^{1}(x,\cdot)$
the transition probabilities of $(\widetilde{Y}^{x}(t))_{t\geq0}$.
Using \eqref{affine property} we find
\begin{align*}
 & \ \E\left[\mathrm{e}^{\mathrm{i}\langle u,Y^{x}(t)\rangle}\right]\E\left[\mathrm{e}^{\mathrm{i}\langle u,\widetilde{Y}^{0}(t)\rangle}\right]\\
 & =\exp\left(\int_{0}^{t}\left(\langle b,\psi(s,\mathrm{i}u)\rangle+F_{0}(\psi(s,\mathrm{i}u))\right)\mathrm{d}s+\langle x,\psi(s,\mathrm{i}u)\rangle\right)\exp\left(\int_{0}^{t}F_{1}(\psi(s,\mathrm{i}u))\mathrm{d}s\right)\\
 & =\exp\left(\phi(t,\mathrm{i}u)+\langle x,\psi(t,\mathrm{i}u)\rangle\right)\\
 & =\E\left[\mathrm{e}^{\mathrm{i}\langle u,X^{x}(t)\rangle}\right]
\end{align*}
which yields
\begin{equation}
P_{t}(x,\cdot)=Q_{t}^{0}(x,\cdot)\ast Q_{t}^{1}(0,\cdot),\qquad t>0,\ \ x\in\R_{+}^{m},\label{eq: decompose P(t,.)}
\end{equation}
where $\ast$ denotes the convolution of measures.
 \begin{proof}[Proof of Theorem \ref{corr L1 continuity}]
 According to Theorem \ref{th:04}, the kernel $Q_{t}^{0}(x,\cdot)$ has a density
$q_{t}^{0}(x,\cdot)$ for $t>0$. In view of \eqref{eq: decompose P(t,.)}, for $t>0$ and
$x\in\R_{+}^{m}$, the
measure $P_{t}(x,\cdot)$ possesses also a density $p_{t}(x,\cdot)$
with respect to the Lebesgue measure and it is given by
\[
p_{t}(x,y)=\int_{\R_{+}^{m}}q_{t}^{0}(x,y-z)Q_{t}^{1}(0,\mathrm{d}z),\quad y\in\R_{+}^{m},
\]
and $p_{t}(x,y)=0$ for $y\notin\R_{+}^{m}$.

Fix $t>0$ and $x\in\R_{+}^{m}$. Let $(x_{n})_{n\in\mathbb{N}}\subset\R_{+}^{m}$
be such that $x_{n}\to x$. Our aim is to show that
\begin{equation}
p_{t}(x_{n},\cdot)\to p_{t}(x,\cdot)\quad\mbox{in\ensuremath{\quad L^{1}\left(\R_{+}^{m}\right)}},\quad\mbox{as}\quad n\to\infty.\label{eq: to be proved}
\end{equation}
We will finish the proof in two steps:

``Step 1'': We show that $q_{t}^{0}(x_{n},\cdot)$ converges
in $L^{1}\left(\R_{+}^{m}\right)$ to $q_{t}^{0}(x,\cdot)$ as $n\to\infty$. Let $\e\in(0,1)$. Write $\widetilde{\rho}(z)=\min\{z_{1}^{1/\alpha_{1}},\dots,z_{m}^{1/\alpha_{m}}\}$
so that $\rho_{\e}(z)=\min\{\e,\widetilde{\rho}(z)\}$. Then
\begin{align*}
 & \ \|q_{t}^{0}(x_{n},\cdot)-q_{t}^{0}(x,\cdot)\|_{L^{1}\left(\R_{+}^{m}\right)}\\
 & \leq\int_{\R_{+}^{m}}\left(q_{t}^{0}(x_{n},y)+q_{t}^{0}(x,y)\right)\mathbbm{1}_{\{\widetilde{\rho}(y)\leq\e\}}\mathrm{d}y\\
 & \ \ \ +\int_{\R_{+}^{m}}\left|q_{t}^{0}(x_{n},y)-q_{t}^{0}(x,y)\right|\mathbbm{1}_{\{\widetilde{\rho}(y)>\e\}}\mathrm{d}y\\
 & =\P[\widetilde{\rho}(Y^{x_{n}}(t))\leq\e]+\P[\widetilde{\rho}(Y^{x}(t))\leq\e]\\
 & \ \ \ +\int_{\R_{+}^{m}}\left|q_{t}^{0}(x_{n},y)-q_{t}^{0}(x,y)\right|\mathbbm{1}_{\{\widetilde{\rho}(y)>\e\}}\mathrm{d}y.
\end{align*}
Using Proposition \ref{th:boundary behavior} we
can have
\begin{align*}
 & \sup\limits _{n\in\mathbb{N}}\P[\widetilde{\rho}(Y^{x_{n}}(t))\leq\e]+\P[\widetilde{\rho}(Y^{x}(t))\leq\e]\\
 & \quad\le\sup\limits _{n\in\mathbb{N}}\P\left[\bigcup_{i=1}^{m}\left\{ \left(Y_{i}^{x_{n}}(t)\right)^{1/\alpha_{i}}\leq\e\right\} \right]+\P\left[\bigcup_{i=1}^{m}\left\{ \left(Y_{i}^{x}(t)\right)^{1/\alpha_{i}}\leq\e\right\} \right]\\
 & \quad\le\sup\limits _{n\in\mathbb{N}}m\P\left[\min\left\{ Y_{1}^{x_{n}}(t),\dots,Y_{m}^{x_{n}}(t)\right\} \leq\e^{\alpha_{\min}}\right]+m\P\left[\min\left\{ Y_{1}^{x}(t),\dots,Y_{m}^{x}(t)\right\} \leq\e^{\alpha_{\min}}\right]\\
 & \quad\le C\e^{\alpha_{\min}}\le C\e.
\end{align*}
For the third term we use
\begin{align*}
 & \int_{\R_{+}^{m}}\left|q_{t}^{0}(x_{n},y)-q_{t}^{0}(x,y)\right|\mathbbm{1}_{\{\widetilde{\rho}(y)>\e\}}\mathrm{d}y\\
 & \quad=\e^{-1}\int_{\R_{+}^{m}}\left|\rho_{\e}(y)q_{t}^{0}(x_{n},y)-\rho_{\e}(y))q_{t}^{0}(x,y)\right|\mathbbm{1}_{\{\widetilde{\rho}(y)>\e\}}\mathrm{d}y\\
 & \quad\le\e^{-1}\int_{\R_{+}^{m}}\left|\rho_{\e}(y)q_{t}^{0}(x_{n},y)-\rho_{\e}(y))q_{t}^{0}(x,y)\right|\mathrm{d}y,
\end{align*}
where the right-hand side tends by Lemma \ref{LEMMA:03}
to zero as $n\to\infty$. So
\[
\limsup\limits _{n\to\infty}\|q_{t}^{0}(x_{n},\cdot)-q_{t}^{0}(x,\cdot)\|_{L^{1}\left(\R_{+}^{m}\right)}\leq C\e.
\]
Since $\e\in(0,1)$ is arbitrary, the desired convergence $q_{t}^{0}(x_{n},\cdot)\to q_{t}^{0}(x,\cdot)$
in $L^{1}\left(\R_{+}^{m}\right)$ is proved.

``Step 2'': We show that \eqref{eq: to be proved} is true in the
general case. We have
\begin{align*}
 & \int_{\R_{+}^{m}}\left|p_{t}(x_{n},y)-p_{t}(x,y)\right|\mathrm{d}y\\
 & \quad=\int_{\R^{m}}\left|p_{t}(x_{n},y)-p_{t}(x,y)\right|\mathrm{d}y\\
 & \quad\le\int_{\R^{m}}\int_{\R^{m}}\left|q_{t}^{0}(x_{n},y-z)-q_{t}^{0}(x,y-z)\right|Q_{t}^{1}(0,\mathrm{d}z)\mathrm{d}y\\
 & \quad=\int_{\R^{m}}\int_{\R^{m}}\left|q_{t}^{0}(x_{n},y-z)-q_{t}^{0}(x,y-z)\right|\mathrm{d}yQ_{t}^{1}(0,\mathrm{d}z)\\
 & \quad=\|q_{t}^{0}(x_{n},\cdot)-q_{t}^{0}(x,\cdot)\|_{L^{1}\left(\R_{+}^{m}\right)} \to 0, \quad \mbox{as} \quad n\to \infty.
\end{align*}
So \eqref{eq: to be proved} is true. The theorem is proved.
\end{proof}

\section{Exponential ergodicity in total variation}

\subsection{Regularity for the invariant measure}
As a consequence of our regularity results for the heat kernel (see
Sections 3 and 4), we can also deduce similar results for the invariant
measure. We start with the one-dimensional case.
\begin{Corollary}
Suppose that \eqref{eq: condition for immigraion} is satisfied.
Assume that $\beta<0$ and $\int_{(1,\infty)}\log(1+z)\nu(\mathrm{d}z)<\infty$.
Then the unique invariant measure $\pi(\mathrm{d}x)$ has a smooth density $g$ which satisfies $g(x)=0$ for all $x\leq0$
 and vanishes at infinity.
\end{Corollary}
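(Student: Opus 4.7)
The plan is to derive the corollary directly from Theorem \ref{th: regularity in one dimension} combined with known results on invariant measures. Under subcriticality ($\beta<0$) and the log-moment assumption on $\nu$, the existence and uniqueness of an invariant probability measure $\pi$ with support in $\R_+$ (having finite log-moment) is classical for CBI-type processes on $\R_+$, so I would simply cite \cite{KM12,JKR18} at this point rather than redo that construction.

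The core step is to fix any $t>0$ and exploit the stationarity relation $\pi=\pi P_t$. Since Theorem \ref{th: regularity in one dimension} gives $P_t(x,\mathrm{d}y)=p_t(x,y)\mathrm{d}y$ with $p_t(x,\cdot)$ continuous, Fubini's theorem yields
\[
 \pi(A)\;=\;\int_{\R_+}P_t(x,A)\,\pi(\mathrm{d}x)\;=\;\int_A\!\left(\int_{\R_+}p_t(x,y)\,\pi(\mathrm{d}x)\right)\mathrm{d}y
\]
for every Borel set $A\subseteq\R_+$. Hence $\pi$ is absolutely continuous with density
\[
 g(y)\;=\;\int_{\R_+}p_t(x,y)\,\pi(\mathrm{d}x),
\]
which vanishes on $(-\infty,0)$ because $\pi$ is concentrated on $\R_+$ and $p_t(x,\cdot)$ is the density of an $\R_+$-valued random variable. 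To obtain smoothness I differentiate under the integral: Theorem \ref{th: regularity in one dimension} supplies, for this fixed $t$ and each $n\in\mathbb{N}_0$, a uniform bound $M_n:=\sup_{x,y\geq 0}|\partial_y^n p_t(x,y)|<\infty$, so since $\pi$ is a probability measure dominated convergence gives
\[
 g^{(n)}(y)\;=\;\int_{\R_+}\partial_y^n p_t(x,y)\,\pi(\mathrm{d}x),\qquad \|g^{(n)}\|_{\infty}\leq M_n,
\]
and therefore $g\in C^\infty(\R_+)$ with bounded derivatives of all orders.

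Finally, vanishing at infinity follows from $g\in L^1(\R_+)$ (being the density of a probability measure) combined with the uniform continuity of $g$ (since $g'$ is bounded): if $g(y)$ did not tend to $0$ as $y\to\infty$, one would find a sequence $y_n\to\infty$ with $g(y_n)\geq\e>0$, and uniform continuity would force $g\geq\e/2$ on intervals of fixed positive length around each $y_n$, contradicting integrability. The only non-routine ingredient in the whole argument is the uniform derivative bound on $p_t$; but this is exactly what Theorem \ref{th: regularity in one dimension} delivers, so no serious obstacle arises beyond assembling these pieces.
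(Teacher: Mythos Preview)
Your proof is correct and takes a genuinely different route from the paper's. The paper works directly with the characteristic function of $\pi$: it recalls from \cite{KM12,JKR18} that $\widehat{\pi}(y)=\mathrm{e}^{\phi(\infty,\mathrm{i}y)}$, then passes to the limit $t\to\infty$ in the key estimate of Proposition~\ref{prop:00} to obtain $|\widehat{\pi}(y)|\leq\mathrm{e}^{-C|y|^{1+\vartheta-\alpha}}$ for large $|y|$, and concludes via classical Fourier inversion. You instead push the regularity of $p_t$ from Theorem~\ref{th: regularity in one dimension} through the stationarity identity $\pi=\pi P_t$; this is precisely the strategy the paper itself adopts for the multi-dimensional analogue (Corollary~\ref{corr regularity invariant measure}). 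The Fourier route has the small advantage of delivering a density that is automatically $C^\infty$ on all of $\R$, so that $g^{(n)}(0)=0$ for every $n$ and the extension by zero to $(-\infty,0]$ is globally smooth; your argument as written only yields $g\in C^\infty([0,\infty))$, though this closes once you note that the Fourier representation \eqref{eq: respresetnation for pt} already defines $p_t(x,\cdot)$ smoothly on $\R$ with the same uniform derivative bounds. Conversely, your approach is more robust, since it does not rely on an explicit formula for $\widehat{\pi}$ and transfers directly to situations where only heat-kernel regularity is available.
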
 \begin{proof} It was shown in \cite{KM12} (see
also \cite{JKR18}) that for all $y\in\R$,
\[
\lim\limits _{t\to\infty}\mathrm{e}^{\phi(t,\mathrm{i}y)+x\psi(t,\mathrm{i}y)}=\mathrm{e}^{\phi(\infty,\mathrm{i}y)}=\int_{\R}\mathrm{e}^{\mathrm{i}xy}\pi(\mathrm{d}x),
\]
where
\[
\phi(\infty,\mathrm{i}y)=\int_{0}^{\infty}\left(b\psi(s,\mathrm{i}y)+\int_{(0,\infty)}\left(\mathrm{e}^{z\psi(s,\mathrm{i}y)}-1\right)\nu(\mathrm{d}z)\right)\mathrm{d}s
\]
and the integral against $\mathrm{d}s$ is absolutely convergent.
Since the process is supported on $\R_{+}$ it is clear that $\pi((-\infty,0))=0$.
Using Proposition \ref{prop:00} we may take the limit $t\to\infty$
in \eqref{eq:00} and find for $t_{0}=1$ constants $M,C>0$
such that
\begin{align*}
\left|\mathrm{e}^{\phi(\infty,\mathrm{i}y)}\right|\leq\mathrm{e}^{-C|y|^{1+\vartheta-\alpha}},\qquad|y|\geq M.
\end{align*}
The assertion follows from classical properties of the Fourier transform.
\end{proof} In the multi-dimensional case we may
use \eqref{EQA:00} to deduce the same regularity for the unique invariant
measure $\pi$.
\begin{Corollary}\label{corr regularity invariant measure}
 Suppose that the anisotropic
stable JCIR process is subcritical, satisfies condition (A) and $\nu$
satisfying
\[
\int_{\R_{+}^{m}}\mathbbm{1}_{\{|z|>1\}}|z|^{1+\tau}\nu(\mathrm{d}z)<\infty
\]
for some $\tau>0$. Then $\pi$ is absolutely continuous with respect
to the Lebesgue measure, i.e., $\pi(\mathrm{d}x)=g(x)\mathrm{d}x$
and there exists a constant $C>0$ such that
\[
\|g^{1}\|_{B_{1,\infty}^{\lambda,a}}\leq C\int_{\R_{+}^{m}}(1+|x|)\pi(\mathrm{d}x)<\infty,
\]
where $g^{1}(x)=\rho_{1}(x)g(x)$ and $\lambda,a$ are given as in
Theorem \ref{th:04}.
\end{Corollary}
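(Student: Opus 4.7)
The plan is to deduce the statement from the invariance identity $\pi = \pi P_t$ combined with the heat-kernel estimate of Theorem \ref{th:04}. Fix any $t>0$ (we shall take $t=1$). By Theorem \ref{corr L1 continuity} the transition probability $P_t(x,\cdot)$ admits a density $p_t(x,\cdot)$ for each $t>0$, so invariance applied to an arbitrary bounded Borel function, together with Fubini, gives
\[
\pi(\mathrm{d}y) = g(y)\mathrm{d}y \qquad \text{with} \qquad g(y) = \int_{\R_+^m} p_t(x,y)\,\pi(\mathrm{d}x),
\]
and hence $g^1(y) = \rho_1(y)g(y) = \int_{\R_+^m} p_t^1(x,y)\,\pi(\mathrm{d}x)$.

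The second step is a Minkowski-type interchange. For each $k\in\{1,\dots,m\}$ and each $h\in[-1,1]$, Fubini yields $\|\Delta_{he_k}g^1\|_{L^1}\leq\int\|\Delta_{he_k}p_t^1(x,\cdot)\|_{L^1}\,\pi(\mathrm{d}x)$, and similarly $\|g^1\|_{L^1}\leq\int\|p_t^1(x,\cdot)\|_{L^1}\pi(\mathrm{d}x)$. Multiplying by $|h|^{-\lambda/a_k}$ and invoking the elementary one-sided bound $\sup_{h}\int\leq\int\sup_{h}$ (the only direction that is actually available, but fortunately the one we need for an upper estimate), we obtain
\[
\|g^1\|_{B_{1,\infty}^{\lambda,a}} \leq \int_{\R_+^m} \|p_t^1(x,\cdot)\|_{B_{1,\infty}^{\lambda,a}}\,\pi(\mathrm{d}x).
\]
Applying Theorem \ref{th:04} with $t=T=1$, $\delta=1$ and $\varkappa=1$ (which is admissible since $\varkappa\in(0,1]$) gives $\|p_1^1(x,\cdot)\|_{B_{1,\infty}^{\lambda,a}}\leq C(1+|x|)$, whence
\[
\|g^1\|_{B_{1,\infty}^{\lambda,a}} \leq C\int_{\R_+^m}(1+|x|)\,\pi(\mathrm{d}x).
\]

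It remains to check that $\int(1+|x|)\pi(\mathrm{d}x)<\infty$. Under subcriticality of $\beta$ together with $\int_{\{|z|>1\}}|z|^{1+\tau}\nu(\mathrm{d}z)<\infty$, this follows from a standard affine first-moment computation: taking expectations in \eqref{eq: a roots model} shows that $t\mapsto\E[X^x(t)]$ solves a linear ODE driven by the subcritical matrix $\beta$ with bounded forcing $b+\int_{\R_+^m}z\,\nu(\mathrm{d}z)$, hence $\sup_{t\geq 0}\E[X^x(t)]<\infty$ for each $x\in\R_+^m$; combined with the invariance identity applied to $f(y)=|y|\wedge N$ and monotone convergence in $N$, one concludes $\int|x|\pi(\mathrm{d}x)<\infty$ (cf.\ \cite{FJR19a, JKR18, KM12} for the same moment transfer argument in the affine setting).

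The main obstacle is precisely the Fubini/Minkowski step, since the Besov (semi)norm is defined by a supremum that does not commute with integration in general. What saves us is that only the upper-bound direction $\sup\int\leq\int\sup$ is required, and that Theorem \ref{th:04} provides a bound on the integrand that depends on $x$ only through $(1+|x|)^{\varkappa}$ with $\varkappa\in(0,1]$, which is then absorbed by the first moment of $\pi$ whose finiteness is guaranteed by subcriticality and the $(1+\tau)$-moment of $\nu$ at infinity.
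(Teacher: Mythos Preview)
Your proof is correct and follows essentially the same route as the paper: express $g$ via the invariance identity $g(y)=\int p_t(x,y)\,\pi(\mathrm{d}x)$, push the Besov norm through the integral using the one-sided Minkowski/$\sup\int\le\int\sup$ inequality, and apply Theorem~\ref{th:04} with $\varkappa=1$. The only cosmetic difference is in the last step: the paper appeals to Proposition~\ref{uniform boundedness first moment} together with weak convergence $P_t(x,\cdot)\to\pi$ to obtain $\int(1+|x|)\,\pi(\mathrm{d}x)<\infty$, whereas you obtain the same conclusion via invariance applied to $|y|\wedge N$ and monotone convergence; both arguments are standard and equivalent.
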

\begin{proof}
Using the invariance of $\pi$ we get
\[
\pi(\mathrm{d}x)=\int_{\R_{+}^{m}}P_{t}(y,\mathrm{d}x)\pi(\mathrm{d}y)=\left(\int_{\R_{+}^{m}}p_{t}(y,x)\pi(\mathrm{d}y)\right)\mathrm{d}x,
\]
i.e., $\pi(\mathrm{d}x)$ has a density $g(x)$ which satisfies
\[
g(x)=\int_{\R_{+}^{m}}p_{t}(y,x)\pi(\mathrm{d}y).
\]
Hence we obtain,  for $p_t^1(x,y) = \rho_1(y)p_t(x,y)$,
\begin{align*}
\|\pi^{1}\|_{B_{1,\infty}^{\lambda,a}} & \leq\int_{\R_{+}^{m}}\|p_{t}^{1}(y,\cdot)\|_{B_{1,\infty}^{\lambda,a}}\pi(\mathrm{d}y)\\
 & \leq C\int_{\R_{+}^{m}}(1+|y|)\pi(\mathrm{d}y).
\end{align*}
Using Proposition \ref{uniform boundedness first moment} combined with the weak convergence $p_t(x,y)\mathrm{d}y \longrightarrow \pi(\mathrm{d}y)$ we find that
$\int_{\R_+^m}|y|\pi(\mathrm{d}y) < \infty$.
The assertion is proved.
\end{proof}

\subsection{Proof of Theorem \ref{th:main result}}
Here and below we suppose that the conditions of Theorem \ref{th:main result} are satisfied.
As in Section 4.3, let $\nu_{0}(\mathrm{d}z)=\mathbbm{1}_{\{|z|\le1\}}\nu(\mathrm{d}z)$, $\nu_{1}(\mathrm{d}z)=\mathbbm{1}_{\{|z|>1\}}\nu(\mathrm{d}z)$ and let $(Y^{x}(t))_{t\geq0}$ be the  solution to \eqref{eq: a roots model}
with $\nu=\nu_{0}$ and  $(\widetilde{Y}^{x}(t))_{t\geq0}$
be the  solution to \eqref{eq: a roots model} with $b=0$
and $\nu=\nu_{1}$. Denote by $Q_{t}^{0}(x,\cdot)$
the transition probabilities of $(Y^{x}(t))_{t\geq0}$ and by $Q_{t}^{1}(x,\cdot)$
the transition probabilities of $(\widetilde{Y}^{x}(t))_{t\geq0}$. Recall that \eqref{eq: decompose P(t,.)} holds.
In order to prove Theorem \ref{th:main result}
we first establish a similar statement for $(Y^x(t))_{t \geq 0}$.
\begin{Proposition}
 There exists constants $C, \delta > 0$ such that
\begin{align}\label{eq:13}
    \| Q^0_t(x,\cdot) - Q^0_t(y,\cdot) \|_{\mathrm{TV}}
    \leq C \min\left\{1, ( 1 + |x| + |y|) \mathrm{e}^{- \delta t} \right\},
\end{align}
 for all $x,y \in \R_+^m$ and all $t \geq 0.$
\end{Proposition}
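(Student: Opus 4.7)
The plan is to apply a Harris-type theorem (Theorem \ref{th:3}) to the truncated process $(Y^x(t))_{t\ge0}$, for which one needs to verify a Foster--Lyapunov drift condition and a local Dobrushin (minorization) condition; both ingredients are available thanks to subcriticality and Theorem \ref{corr L1 continuity} (which applies to $Y^x$ since $\nu_0$ has compact support and therefore automatically satisfies the $(1+\tau)$-moment assumption).

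\textbf{Step 1 (drift condition).} Subcriticality of $\beta$ together with the fact that $\beta$ has nonnegative off-diagonal entries makes $\beta$ a Metzler matrix whose spectrum lies in $\{\Re z<0\}$. A Perron--Frobenius argument then yields a strictly positive left eigenvector $v\in\R_{++}^m$ with $v^{T}\beta=-\lambda v^{T}$, $\lambda>0$. Taking $V(x)=1+\langle v,x\rangle$ and using the affine representation of $(Y^x(t))_{t\ge0}$ together with the (all order) finite moments of $\nu_0$, one obtains a linear ODE for $\E[\langle v,Y^x(t)\rangle]$, leading to
$$\E[V(Y^x(t))]\le V(x)\mathrm{e}^{-\delta t}+K(1-\mathrm{e}^{-\delta t}),\qquad t\ge 0,$$
for constants $\delta,K>0$ depending only on $b,v$, $\beta$ and $\nu_0$.

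\textbf{Step 2 (local Dobrushin condition).} For every $R>0$ and every fixed $t_0>0$ we claim
$$\sup_{x,y\in\R_+^m,\,|x|,|y|\le R}\|Q_{t_0}^0(x,\cdot)-Q_{t_0}^0(y,\cdot)\|_{\mathrm{TV}}\;\le\;2(1-\varepsilon_R)$$
for some $\varepsilon_R\in(0,1]$. By Theorem \ref{corr L1 continuity} the map $x\longmapsto q_{t_0}^0(x,\cdot)\in L^1(\R_+^m)$ is continuous, so that the family $\{q_{t_0}^0(x,\cdot):|x|\le R\}$ is compact in $L^1$. Combining this compactness with a weak irreducibility statement in the spirit of \cite{PZ18} --- namely, the existence of an open set $U\subset\R_{++}^m$ such that $\int_U q_{t_0}^0(x,z)\,\mathrm{d}z>0$ uniformly for $x$ in the compact set $\{|x|\le R\}\cap\R_+^m$ (this uses the subordinator component of the noise and Proposition \ref{th:boundary behavior} to stay away from the boundary) --- one deduces
$$\inf_{|x|,|y|\le R}\int_{\R_+^m}\min\bigl\{q_{t_0}^0(x,z),q_{t_0}^0(y,z)\bigr\}\,\mathrm{d}z\;\ge\;\varepsilon_R>0,$$
which is equivalent to the desired Dobrushin contraction.

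\textbf{Step 3 (conclusion).} Feeding the drift condition of Step 1 and the minorization of Step 2 into the abstract Harris-type theorem (Theorem \ref{th:3}) produces constants $C,\delta>0$ such that
$$\|Q_t^0(x,\cdot)-Q_t^0(y,\cdot)\|_{\mathrm{TV}}\le C\bigl(V(x)+V(y)\bigr)\mathrm{e}^{-\delta t}\le C'\bigl(1+|x|+|y|\bigr)\mathrm{e}^{-\delta t},$$
which, together with the trivial bound $\|\cdot\|_{\mathrm{TV}}\le 2$, gives \eqref{eq:13}. \textbf{The main obstacle} is clearly Step 2: the $L^1$-continuity from Theorem \ref{corr L1 continuity} only yields compactness of the family of densities but not automatically a uniform positive overlap. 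Establishing this overlap requires identifying a concrete open set $U\subset\R_{++}^m$ that is accessible with positive density from every starting point in a compact subset of $\R_+^m$; this accessibility will be read off the structure of the driving noise (use of the subordinator jumps to enter $\R_{++}^m$) combined with the boundary non-attainment of Proposition \ref{th:boundary behavior}, and then upgraded to a uniform lower bound via continuity and compactness.
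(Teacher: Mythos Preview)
Your overall strategy---apply the Harris-type Theorem \ref{th:3} to $(Y^x(t))_{t\ge0}$ by verifying a Foster--Lyapunov drift condition and a local Dobrushin condition---is exactly the paper's. The drift condition (your Step 1) is handled with a different Lyapunov function: you use the linear $V(x)=1+\langle v,x\rangle$ built from a Perron--Frobenius left eigenvector, while the paper takes $V(x)=(1+|x|_M^2)^{1/2}$ with $M=\int_0^\infty \mathrm{e}^{t\beta^\top}\mathrm{e}^{t\beta}\,\mathrm{d}t$ (Lemma \ref{Lyapunov estimate}). Both choices work; your version is slightly more delicate if $\beta$ is reducible, since then the positive left eigenvector need not exist.

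The substantive divergence is in Step 2, and here the paper's argument is both simpler and sidesteps precisely the obstacle you flagged. You attempt to produce the Dobrushin bound at a \emph{fixed} time $t_0$ by combining $L^1$-compactness of $\{q_{t_0}^0(x,\cdot):|x|\le R\}$ with a uniform irreducibility statement (a common open set $U\subset\R_{++}^m$ reached with uniformly positive density). As you acknowledge, establishing this irreducibility is the hard part, and your sketch via ``subordinator jumps'' is not robust: if $b\in\R_{++}^m$ and $\nu\equiv0$ then condition (A) holds but there is no subordinator to exploit. The paper avoids irreducibility altogether by allowing the Dobrushin time $h$ to be \emph{large} and using a two-stage coupling. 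Conditioning on $(Y^x(h-1),Y^y(h-1))$ with joint law $H$, one splits according to whether the pair lies in $\Theta=\{|\tilde x-\tilde y|\le\eta,\ |\tilde x|,|\tilde y|\le\widetilde R\}$. On $\Theta$ the $L^1$-continuity of $x\mapsto q_1^0(x,\cdot)$ from Theorem \ref{corr L1 continuity} gives $|Q_1^0f(\tilde x)-Q_1^0f(\tilde y)|\le1$ directly; and $H(\Theta^c)$ is made small by (i) the Wasserstein contraction $\E[|Y^x(t)-Y^y(t)|]\le C|x-y|\mathrm{e}^{-ct}$ from \cite[Proposition 6.1]{FJR19a} together with (ii) uniform first-moment bounds (Proposition \ref{uniform boundedness first moment}). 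In short, the paper replaces your ``uniform positive overlap at fixed time'' by the softer combination ``$L^1$-continuity at time $1$ $+$ Wasserstein contraction over time $h-1$,'' both of which are already in hand.
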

\begin{proof}
Following \cite[Lemma 3.4]{JKR18} we define a new norm by
\begin{align}\label{eq:02}
 |x|_M = \langle x, x \rangle_M^{1/2} = \langle x, M x \rangle^{1/2}
 \ \ \text{ where } M = \int_0^{\infty} \mathrm{e}^{t \beta^{\top}}\mathrm{e}^{t \beta} \mathrm{d}t.
\end{align}
Then note that $M$ is symmetric, positive definite and satisfies
$M\beta + \beta^{\top}M = - \mathbbm{1}$.
In view of Theorem \ref{th:3} it suffices to show that the following two properties are satisfied.
\begin{enumerate}
    \item[(i)] The function $V(x) = (1+|x|_M^2)^{1/2}$ belongs to the domain of the extended generator and there exist constants $c_1,c_2 > 0$ such that
    \[
     L_0V(x) \leq - c_1 V(x) + c_2, \qquad x \in \R_+^m,
    \]
    where $L_0$ denotes the extended generator of $(Y^x(t))_{t \geq 0}$.
    \item[(ii)] For every $R > 0$ there exist $h > 0$ and $\delta \in (0,2)$ with
    \[
      \| Q^0_h(x,\cdot) - Q^0_h(y,\cdot) \|_{\mathrm{TV}} \leq 2 - \delta,
    \]
    for all $x,y \in \R_+^m$ with $|x|,|y| \leq R$.
\end{enumerate}
Property (i) can be shown by similar (but essentially simpler) arguments to
\cite[Lemma 3.4 and Proposition 3.7]{JKR18}. For the sake of completeness a proof is outlined in the appendix, see Lemma \ref{Lyapunov estimate}.
Let us now prove property (ii).
Let $R > 0$ and take any $x,y \in \R_+^m$ with $|x|, \ |y| \leq R$.
Fix any bounded measurable function $f$ on $\R_+^m$ satisfying $\| f\|_{\infty} \leq 1$. Choose $h > 1$ and $\widetilde{R} > 0$ to be specified later on.
Let $H = H_{h,x,y}$ be the joint distribution of $(Y^x(h-1), Y^y(h-1))$, i.e.
$H(\mathrm{d}\widetilde{x}, \mathrm{d}\widetilde{y}) = \P[ Y^x(h-1) \in \mathrm{d}\widetilde{x},\ Y^y(h-1) \in \mathrm{d}\widetilde{y}]$.
Since $(Q_t^0)_{t \geq 0}$ satisfies the conditions of Theorem \ref{corr L1 continuity}, $Q_t^0(x,\cdot)$ has density $q_t^0(x,\cdot)$
being continuous in $x$ with respect to $L^1(\R^m)$.
Hence we find $\eta(\widetilde{R}) > 0$ (independent of $f$)
such that $\|q_1^0(\widetilde{x},\cdot) - q_1^0(\widetilde{y},\cdot)\|_{L^1(\R^m_+)} \leq 1$
for $(\widetilde{x}, \widetilde{y}) \in \Theta$, where
\[
 \Theta = \{ (\widetilde{x}, \widetilde{y}) \in \R_+^m \times \R_+^m \ | \ |\widetilde{x} - \widetilde{y}| \leq \eta(\widetilde{R}), \ |\widetilde{x}|, \ |\widetilde{y}| \leq \widetilde{R} \}.
\]
It follows easily that  $|Q_1^0f(\widetilde{x}) - Q_1^0f(\widetilde{y})| \leq 1$ for $(\widetilde{x}, \widetilde{y}) \in \Theta$. Then we obtain from $\| Q_1^0 f \|_{\infty} \leq 1$
\begin{align*}
    | Q_h^0f(x) - Q_h^0f(y)|
    &\leq \left| \int_{\Theta} (Q_1^0f(\widetilde{x}) - Q_1^0f(\widetilde{y})) H(\mathrm{d}\widetilde{x}, \mathrm{d}\widetilde{y})\right|
    \\ &\ \ \ + \left| \int_{\Theta^c} (Q_1^0f(\widetilde{x}) - Q_1^0f(\widetilde{y})) H(\mathrm{d}\widetilde{x}, \mathrm{d}\widetilde{y})\right|
    \\ &\leq H(\Theta) + 2 H(\Theta^c)
    \\ &= 2 - H(\Theta).
\end{align*}
Next we obtain
\begin{align*}
 H(\Theta^c)&\leq \P[ |Y^x(h-1) - Y^y(h-1)| > \eta(\widetilde{R}) ] + \P[ |Y^x(h-1)| > \widetilde{R} ]
 + \P[ |Y^y(h-1)| > \widetilde{R} ]
 \\ &\leq \eta(\widetilde{R})^{-1}\E[ |Y^x(h-1) - Y^y(h-1) |] + \widetilde{R}^{-1}\E\left[ |Y^x(h-1)| + |Y^y(h-1)| \right]
 \\ &\leq \frac{m}{\eta(\widetilde{R})}|x-y| \mathrm{e}^{- c (h-1)} + \frac{C(1 + |x| + |y|)}{\widetilde{R}}
 \\ &\leq \frac{2m R \mathrm{e}^{c}}{\eta(\widetilde{R})} \mathrm{e}^{-ch} + \frac{C(1 + 2R)}{\widetilde{R}},
\end{align*}
where $c > 0$ and $C > 0$ are some constants given by
\cite[Proposition 6.1]{FJR19a} and Proposition \ref{uniform boundedness first moment}.
Take first $\widetilde{R} > 0$ and then $h > 1$ large enough such that
\[
 \frac{C(1 + 2R)}{\widetilde{R}} < \frac{1}{2} \ \text{ and } \
 \frac{2mR\mathrm{e}^c}{\eta(\widetilde{R})}\mathrm{e}^{-ch} < \frac{1}{2}.
\]
Then
\[
 H(\Theta) = 1 - H(\Theta^c) \geq 1 - \frac{2mR\mathrm{e}^{c}}{\eta(\widetilde{R})} \mathrm{e}^{- c h} - \frac{C(1+ 2R)}{\widetilde{R}} =: \delta \in (0,1).
\]
This proves the assertion.
\end{proof}
We are now prepared to give a proof for Theorem \ref{th:main result}.
\begin{proof}[Proof of Theorem \ref{th:main result}]
Let $\pi$ be the unique invariant measure
and let $H$ be a coupling of $\delta_x$ and $\pi$,
i.e., a Borel probability measure over $\R_+^m \times \R_+^m$ whose marginals are $\delta_x$ and $\pi$, respectively. Then
\begin{align*}
    \| P_t(x, \cdot) - \pi \|_{\mathrm{TV}}
    &\leq \int_{ \R_+^m \times \R_+^m} \| P_t(y,\cdot) - P_t(\widetilde{y},\cdot) \|_{\mathrm{TV}} H(\mathrm{d}y, \mathrm{d}\widetilde{y})
    \\ &\leq \int_{ \R_+^m \times \R_+^m} \| Q^0_t(y,\cdot) - Q^0_t(\widetilde{y},\cdot) \|_{\mathrm{TV}} H(\mathrm{d}y, \mathrm{d}\widetilde{y})
    \\ &\leq C\int_{ \R_+^m \times \R_+^m} \min\left\{ 1, (1+ |y| + |\widetilde{y}|) e^{- \delta t} \right\} H(\mathrm{d}y, \mathrm{d}\widetilde{y}),
\end{align*}
where we have used \cite[Lemma 2.3]{FJKR19} and then \eqref{eq:13} for the integrand.
Note that for $a,b\ge 0$,
\begin{align*}
1\wedge (ab) & \le  C\log(1+ab) \\
 &\leq C\min\{\log(1+a), \log(1+b)\} + C\log(1+a)\log(1+b)
 \\ &\leq C \log(1+a)( 1 + \log(1+b))
 \\ &\leq C a (1 + \log(1+b)),
\end{align*}
where the second inequality is proved in \cite[Lemma 8.5]{FJR19a}. Choosing $a = \mathrm{e}^{- \delta t}$ and $b = 1 + |y| + |\widetilde{y}|$  in the last inequality gives
\begin{align*}
     \| P_t(x, \cdot) - \pi \|_{\mathrm{TV}}
     &\leq C\mathrm{e}^{-\delta t}\int_{ \R_+^m \times \R_+^m} \left(1 + \log(2 + |y| +|\widetilde{y}|)\right)  H(\mathrm{d}y, \mathrm{d}\widetilde{y})
     \\ &\leq C\mathrm{e}^{-\delta t}\int_{ \R_+^m \times \R_+^m} \left(1 + \log(1 + |y|) + \log(1 +|\widetilde{y}|)\right)  H(\mathrm{d}y, \mathrm{d}\widetilde{y})
     \\ &= C \mathrm{e}^{- \delta t} \left( 1 + \log(1 + |x|) + \int_{\R_+^m}\log(1 + |y|) \pi(\mathrm{d}y) \right),
\end{align*}
where we have used the subadditivity $\log(1 + a + b) \leq \log(1 + a) + \log(1 + b)$.
This completes the proof of Theorem \ref{th:main result}.
\end{proof}

\appendix

\section{Moments of the anisotropic stable JCIR process}

The following can be shown by rather standard arguments, see
e.g. \cite[Proposition 5.1]{FJR19a}.
\begin{Proposition}\label{moment estimate}
Let $\eta\in(0,\alpha_{\min})$ and suppose that
\begin{align}
\int_{|z|>1}|z|^{\eta}\nu(\mathrm{d}z)<\infty.\label{eq:10}
\end{align}
Then for each $T>0$, there exists a constant $C_{T}>0$ such that
\[
\sup_{t\in[0,T]}\E[|X^{x}(t)|^{\eta}]\leq C_{T}(1+|x|)^{\eta},\qquad x\in\R_{+}^{m}.
\]
\end{Proposition}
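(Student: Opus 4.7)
The plan is to run a Lyapunov-function argument based on the extended generator $L$ of the anisotropic stable JCIR process. I would work with the smooth proxy
\begin{equation*}
V_\eta(x) := (1+|x|^2)^{\eta/2},
\end{equation*}
which satisfies $c_1(1+|x|)^\eta \le V_\eta(x) \le c_2(1+|x|)^\eta$, and aim for the pointwise drift inequality
\begin{equation*}
L V_\eta(x) \le C_1 V_\eta(x) + C_2, \qquad x \in \R_+^m,
\end{equation*}
with $C_1, C_2$ depending only on the model parameters, $\eta$ and the moment bound \eqref{eq:10}. Once this is available, Dynkin's formula applied to the process stopped at the exit time $\tau_n$ from the ball of radius $n$, combined with Grönwall's inequality and Fatou's lemma as $n\to\infty$, yields $\E[V_\eta(X^x(t))] \le (V_\eta(x)+C_2/C_1)\mathrm{e}^{C_1 t}$, which is equivalent to the asserted bound on the interval $[0,T]$.

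To establish the drift inequality I would estimate $LV_\eta$ term-by-term. The drift contribution $\langle b+\beta x,\nabla V_\eta(x)\rangle$ is controlled by $CV_\eta(x)$ using $|\nabla V_\eta(x)| \le C(1+|x|)^{\eta-1}$ and the linear growth of $\beta x$. The $\nu$-jump integral is split at $|z|=1$: for $|z|\le 1$ a first-order Taylor expansion together with $\int \min\{1,|z|\}\,\nu(\mathrm{d}z) < \infty$ gives a bound by $CV_\eta(x)$, while for $|z|>1$ the sub-additivity $V_\eta(x+z)\le C(V_\eta(x)+|z|^\eta)$ (valid because $\eta/2 < 1$) together with the standing hypothesis \eqref{eq:10} gives a bound by $C(1+V_\eta(x))$.

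The real obstacle is the stable-jump contribution
\begin{equation*}
I_j(x) = \sigma_j x_j\int_0^\infty\bigl(V_\eta(x+e_jz)-V_\eta(x)-z\,\partial_jV_\eta(x)\bigr)\mu_{\alpha_j}(\mathrm{d}z),
\end{equation*}
because the multiplicative prefactor $x_j$ can be of the order of $|x|$, so the cancellation built into the compensated integrand must be exploited carefully. Using the second-order Taylor remainder
\begin{equation*}
V_\eta(x+e_jz)-V_\eta(x)-z\,\partial_jV_\eta(x) = z^2\int_0^1(1-s)\,\partial^2_{jj}V_\eta(x+se_jz)\,\mathrm{d}s
\end{equation*}
together with $|\partial^2_{jj}V_\eta(y)|\le C(1+|y|)^{\eta-2}$ yields the two-sided pointwise bound by $C\min\{z^2(1+|x|)^{\eta-2},\,z^\eta\}$. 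Integrating this against $z^{-1-\alpha_j}\,\mathrm{d}z$ with the natural split of the domain at $z=1+|x|$ produces a bound of order $(1+|x|)^{\eta-\alpha_j}$, and multiplying by $\sigma_j x_j\le \sigma_j(1+|x|)$ gives $I_j(x)\le C(1+|x|)^{\eta+1-\alpha_j}\le CV_\eta(x)$, where the last inequality uses $\alpha_j>1$. The exponent arithmetic here is exactly what makes both hypotheses $\eta<\alpha_{\min}$ (for integrability at large $z$) and $\alpha_j>1$ (to pull the final exponent back below $\eta$) indispensable.

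Apart from this central estimate the remaining ingredients are routine: localization through the exit times $\tau_n$ is standard for pure-jump affine processes on $\R_+^m$, the monotone limit $n\to\infty$ is justified by the continuity of $V_\eta$ and Fatou, and the constant $C_T$ in the final bound is exponential in $T$ and absorbs $C_1,C_2$.
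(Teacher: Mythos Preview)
Your Lyapunov approach is correct and is precisely the ``rather standard argument'' the paper alludes to; the paper itself does not prove this proposition but simply cites \cite[Proposition 5.1]{FJR19a}. Your treatment of the stable-jump term $I_j(x)$ via the split at $z=1+|x|$ is the right idea and the exponent bookkeeping $(1+|x|)^{\eta+1-\alpha_j}\le C V_\eta(x)$ is exactly where the hypotheses $\eta<\alpha_{\min}$ and $\alpha_j>1$ enter.

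One small imprecision: the claimed \emph{two-sided} pointwise bound $|{\cdot}|\le C\min\{z^2(1+|x|)^{\eta-2},z^\eta\}$ is not quite right for the lower side when $\eta<1$, because the compensator contribution $z\,\partial_jV_\eta(x)\le Cz(1+|x|)^{\eta-1}$ can dominate $z^\eta$ for $z\ge 1+|x|$. This does not affect the argument, however: for the drift inequality you only need the \emph{upper} bound on the integrand, and since $V_\eta(x)\ge 0$ and $z\,\partial_jV_\eta(x)\ge 0$ on $\R_+^m$ one has directly
\[
V_\eta(x+e_jz)-V_\eta(x)-z\,\partial_jV_\eta(x)\le V_\eta(x+e_jz)\le Cz^\eta,\qquad z\ge 1+|x|.
\]
Absolute integrability of the integrand (needed to make $LV_\eta$ well-defined) follows anyway from the separate bounds $V_\eta(x+e_jz)\le Cz^\eta$, $V_\eta(x)\le C(1+|x|)^\eta$, and $z\,\partial_jV_\eta(x)\le Cz(1+|x|)^{\eta-1}$, each of which is integrable against $z^{-1-\alpha_j}$ at infinity. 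With this minor clarification the proof goes through as you outlined.
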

In particular, if \eqref{eq:10} holds for $\eta=1$,
then $(X(t))_{t\geq0}$ has finite first moment. This moment was computed
in \cite{BLP15} where it was shown that
\begin{align}
\E[X^{x}(t)]=\mathrm{e}^{\beta t}x+\int_{0}^{t}\mathrm{e}^{\beta s}\left(b+\int_{\R_{+}^{m}}z\nu(\mathrm{d}z)\right)\mathrm{d}s.\label{eq:first moment}
\end{align}
Actually in \cite{BLP15} the more general class of multi-type continuous-state
branching processes were studied. An extension of such a formula to
general affine processes on the canonical state space was obtained
in \cite{FJR19a}. One simple consequence is the uniform boundedness
of the first moment stated below.
\begin{Proposition}\label{uniform boundedness first moment}
Suppose that $\beta$ has only eigenvalues with negative real-parts
and \eqref{eq:10} holds for $\eta=1$.
Then there exists a constant $C>0$ such that
\[
\sup_{t\geq0}\E[|X^{x}(t)|]\leq C(1+|x|),\qquad x\in\R_{+}^{m}.
\]
\end{Proposition}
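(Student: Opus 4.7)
The plan is to deduce the bound directly from the explicit first-moment formula \eqref{eq:first moment} together with the spectral assumption on $\beta$. Since each component $X_k^x(t)$ is nonnegative, the $1$-norm satisfies $|X^x(t)|_1 = \sum_{k=1}^m X_k^x(t)$ almost surely, hence $\E[|X^x(t)|_1] = \sum_k \E[X_k^x(t)] = |\E[X^x(t)]|_1$. By equivalence of norms on $\R^m$ it therefore suffices to prove a bound of the form $|\E[X^x(t)]| \leq C(1+|x|)$ uniformly in $t \geq 0$, and this is what \eqref{eq:first moment} reduces the question to.

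From \eqref{eq:first moment} we have $\E[X^x(t)] = \mathrm{e}^{\beta t} x + \int_0^t \mathrm{e}^{\beta s}\bigl(b + \int_{\R_+^m} z\, \nu(\mathrm{d}z)\bigr)\mathrm{d}s$. Because $\beta$ has only eigenvalues with strictly negative real part, standard linear algebra (Jordan normal form or a Lyapunov-type argument) gives constants $K, \delta > 0$ with $\|\mathrm{e}^{\beta t}\|_{\mathrm{op}} \leq K \mathrm{e}^{-\delta t}$ for all $t \geq 0$. Let $v := b + \int_{\R_+^m} z\, \nu(\mathrm{d}z)$, which is a finite vector thanks to the hypothesis \eqref{eq:10} with $\eta = 1$. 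Then
\[
 |\E[X^x(t)]| \leq \|\mathrm{e}^{\beta t}\|_{\mathrm{op}} |x| + \int_0^t \|\mathrm{e}^{\beta s}\|_{\mathrm{op}} |v| \, \mathrm{d}s \leq K \mathrm{e}^{-\delta t} |x| + \frac{K |v|}{\delta},
\]
which is bounded by $C(1+|x|)$ for a constant $C$ independent of $t$ and $x$. Combining with the norm equivalence reduction above yields the desired uniform estimate.

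There is no genuine obstacle here; the only point requiring a little care is the passage from $\E[|X^x(t)|]$ to $|\E[X^x(t)]|$, which is handled by the nonnegativity of the coordinates of $X^x(t)$, and the derivation of the uniform matrix bound $\|\mathrm{e}^{\beta t}\|_{\mathrm{op}} \leq K \mathrm{e}^{-\delta t}$, which is classical under the subcriticality assumption. All other steps are immediate from \eqref{eq:first moment} and Proposition \ref{moment estimate} (which guarantees that the first-moment formula is meaningful when $\eta = 1$ is admissible in \eqref{eq:10}).
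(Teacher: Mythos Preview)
Your proof is correct and follows essentially the same approach as the paper: both use the explicit first-moment formula \eqref{eq:first moment}, the nonnegativity of the coordinates to pass from $\E[|X^x(t)|]$ to $|\E[X^x(t)]|$, and the exponential decay $\|\mathrm{e}^{\beta t}\|_{\mathrm{op}}\leq K\mathrm{e}^{-\delta t}$ under subcriticality. The only cosmetic difference is that you invoke the $1$-norm identity $\E[|X^x(t)|_1]=|\E[X^x(t)]|_1$ and norm equivalence, whereas the paper writes $\E[|X^x(t)|]\leq\sum_k\E[X_k^x(t)]\leq\sqrt{m}\,|\E[X^x(t)]|$ via Cauchy--Schwarz; these are the same step in slightly different notation.
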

\begin{proof}
Let $\varkappa>0$ be such that
$|\mathrm{e}^{\beta t}y|\leq e^{-\varkappa t}|y|$ for all $y\in\R^{m}$
and set
\[
\widetilde{b}=b+\int_{\R_{+}^{m}}z\nu(\mathrm{d}z).
\]
Using first the sub-additivity of the square-root, then the Cauchy-Schwartz
inequality and finally \eqref{eq:first moment} we find that
\begin{align*}
\E[|X^{x}(t)|] & \leq\sum_{k=1}^{m}\E[X_{k}^{x}(t)]\\
 & \leq\sqrt{m}|\E[X^{x}(t)]|\\
 & \leq\sqrt{m}|\mathrm{e}^{\beta t}x|+\sqrt{m}\int_{0}^{t}|\mathrm{e}^{\beta s}\widetilde{b}|\mathrm{d}s\\
 & \leq\sqrt{m}\mathrm{e}^{-\varkappa t}|x|+\sqrt{m}\int_{0}^{t}\mathrm{e}^{-\varkappa s}|\widetilde{b}|\mathrm{d}s\\
 & \leq\sqrt{m}|x|+\sqrt{m}\frac{|\widetilde{b}|}{\varkappa},
\end{align*}
which proves the assertion. \end{proof}

\section{Lyapunov estimate for the extended generator}
 Recall that $|x|_M$ is defined by \eqref{eq:02}, $V(x) = (1 + |x|_M^2)^{1/2}$
 and observe that we can find constants $c^* \geq c_* > 0$ such that
 \begin{align}\label{equivalence norms}
  c_* |x| \leq |x|_M \leq c^* |x|, \qquad x \in \R_+^m.
 \end{align}
 Let $L_0$ be the extended generator of the anisotropic stable JCIR process $(Y^x(t))_{t \geq 0}$ obtained from \eqref{eq: a roots model}
 whose subordinator $\nu$ has only small jumps, i.e., $\nu( \{ |z| > 1\} ) = 0$.
\begin{Lemma}\label{Lyapunov estimate}
Suppose that $\beta $ has only eigenvalues with negative real-parts.
Then $V$ belongs to the domain of the extended generator $L_0$, one has
 \begin{align*}
     L_0V(x) &= \langle b + \beta x, \nabla V(x) \rangle + \int_{ \{ |z| \leq 1\} } \left( V(x+z) - V(x) \right) \nu(\mathrm{d}z)
     \\ &\ \ \ + \sum_{j=1}^{m}\sigma_j x_j \int_0^{\infty}\left( V(x + e_j z) - V(x) - z \frac{\partial V(x)}{\partial x_j} \right)\mu_{\alpha_j}(\mathrm{d}z), \qquad x \in \R_+^m
 \end{align*}
 and there exists two constants $c_1, c_2 > 0$ such that
 \begin{align}\label{eq:03}
  L_0V(x) \leq - c_1 V(x) + c_2, \qquad x \in \R_+^m.
 \end{align}
\end{Lemma}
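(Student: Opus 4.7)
The plan is to verify first that $V$ belongs to the domain of the extended generator $L_0$ and has the stated representation, and then to estimate each of the three contributions separately, showing that the drift term supplies a linear dissipation $\sim -|x|$ which dominates the contributions from the small-jump part of $\nu$ and from the stable integrals.

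For membership in the domain, I would apply It\^o's formula to $V(Y^x(t))$ stopped at $\tau_n = \inf\{t\ge 0 : |Y^x(t)|\ge n\}$. Since $V \in C^\infty(\R_+^m)$ with bounded gradient (indeed $V$ is $c^*$-Lipschitz because $s \mapsto \sqrt{1+s^2}$ is $1$-Lipschitz and $|\cdot|_M \le c^*|\cdot|$), since $\int_{\{|z|\le 1\}}|z|\nu(\mathrm{d}z) < \infty$ for the (small-jump) subordinator, and since the $\alpha_j$-stable integrands in the generator are absolutely convergent once multiplied by $\sigma_j x_j$ (by the Taylor bound I describe below), passing $n \to \infty$ via dominated convergence yields the claimed formula for $L_0 V$.

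The core computation is then the explicit evaluation of the three pieces. A direct calculation gives $\nabla V(x) = Mx/V(x)$ and $\partial_j^2 V(x) = M_{jj}/V(x) - (Mx)_j^2/V(x)^3$. Using $M = M^\top$ and the Lyapunov equation $M\beta + \beta^\top M = -\mathbbm{1}$ I obtain $\langle \beta x, Mx\rangle = -\tfrac12|x|^2$, so the drift contribution equals
\[
\langle b + \beta x, \nabla V(x)\rangle = -\frac{|x|^2}{2V(x)} + \frac{\langle b, Mx\rangle}{V(x)},
\]
which behaves like $-|x|/(2c^*) + O(1)$ for $|x|$ large. The small-jump integral is bounded uniformly in $x$ since
\[
\int_{\{|z|\le 1\}}\bigl|V(x+z) - V(x)\bigr|\nu(\mathrm{d}z) \le c^* \int_{\{|z|\le 1\}}|z|\nu(\mathrm{d}z) < \infty.
\]

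The main technical point is the estimate of the stable integrals, which I would handle by splitting at $z = V(x)$. For $z \le V(x)$ a second-order Taylor expansion combined with $|\partial_j^2 V| \le C/V(x)$ gives $|V(x+e_jz) - V(x) - z\partial_j V(x)| \le C z^2/V(x)$, while for $z \ge V(x)$ the Lipschitz bound yields the linear estimate $\le 2c^* z$. Integrating against $\mu_{\alpha_j}(\mathrm{d}z) = c(\alpha_j)^{-1} z^{-1-\alpha_j}\mathrm{d}z$ on $(0,\infty)$ and using $\alpha_j \in (1,2)$ gives
\[
\int_0^\infty \bigl|V(x+e_jz) - V(x) - z\partial_j V(x)\bigr|\mu_{\alpha_j}(\mathrm{d}z) \le C V(x)^{1-\alpha_j},
\]
so the total stable contribution is dominated by $C\sum_{j=1}^m \sigma_j x_j V(x)^{1-\alpha_j}$. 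Collecting everything,
\[
L_0 V(x) \le -\frac{|x|^2}{2V(x)} + \frac{|b|\,|Mx|}{V(x)} + C + C\sum_{j=1}^m \sigma_j x_j V(x)^{1-\alpha_j}.
\]
Since $x_j V(x)^{1-\alpha_j} \lesssim |x|^{2-\alpha_j}$ with $2 - \alpha_j < 1$, all terms except the first grow strictly slower than $|x|$, while the first is equivalent to $-|x|/(2c^*) \asymp -V(x)$. Hence for any sufficiently small $c_1 \in (0, 1/(2(c^*)^2))$ there is a constant $c_2 > 0$ such that \eqref{eq:03} holds, proving the lemma. The only delicate step is the stable integral estimate, where one must ensure the split gives the correct exponent $1-\alpha_j \in (-1,0)$ and that $\alpha_j > 1$ lets the small-$z$ piece converge.
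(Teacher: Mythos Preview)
Your proof is correct and follows the same overall architecture as the paper: compute $\nabla V$ and $\partial_j^2 V$, verify membership in the domain of $L_0$ via It\^o's formula with localization, and then bound the drift, small-jump, and stable-jump pieces separately, with the drift providing the dissipation $\sim -V(x)$.

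The one genuine difference lies in how you handle the stable integrals. The paper splits each $\int_0^\infty$ at a \emph{fixed} cutoff $R$: the near part contributes a bounded constant (since $x_j/V(x)$ is bounded), while the far part contributes a term of size $C\big(\max_j\int_R^\infty z\,\mu_{\alpha_j}(\mathrm{d}z)\big)V(x)$; one then chooses $R$ large enough that this coefficient is strictly smaller than the drift constant $c_*^2/(4(c^*)^2)$ and absorbs it. You instead split at the $x$-dependent threshold $z = V(x)$, which yields the sharper bound $\int_0^\infty|\dots|\,\mu_{\alpha_j}(\mathrm{d}z) \le C\,V(x)^{1-\alpha_j}$ and hence a total stable contribution $\le C\sum_j \sigma_j x_j V(x)^{1-\alpha_j} \lesssim V(x)^{2-\alpha_{\min}} = o(V(x))$. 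Your route avoids the parameter-tuning step and makes explicit that the stable part is genuinely sublinear in $V$, at the cost of tracking an $x$-dependent split; the paper's fixed-$R$ argument is slightly more pedestrian but equally sufficient for the Foster--Lyapunov inequality.
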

\begin{proof}

 By direct computation one finds that
 \begin{align*}
  \nabla V(x) = \frac{Mx}{V(x)} \ \text{ and } \ \frac{\partial^2 V(x)}{\partial x_j \partial x_k} = \frac{M_{jk}}{V(x)} - \frac{(Mx)_k (Mx)_j}{V(x)^3},
 \end{align*}
 which, together with \eqref{equivalence norms}, imply $|\nabla V(x)| \leq C$ and
 $\left|\frac{\partial^2 V(x)}{\partial x_j \partial x_k}\right| \leq
 CV(x)^{-1}$ for all $k,j \in \{1,\dots, m\}$. Here and below we use $C$ to denote a generic positive constant whose precise value is not important and may vary from time to time.
 By the mean-value theorem we obtain
 \begin{align*}
     |V(x + z) - V(x)| \leq C |z|, \qquad x,z \in \R_+^m,
 \end{align*}
 and applying the mean-value theorem twice gives
 \begin{align*}
     \left| V(x+ e_j z) - V(x) - z\frac{\partial V(x)}{\partial x_j}\right|
      &=  \left| z^2 \int_0^1 \int_0^1 \frac{\partial^2 V(x + e_j z s r)}{\partial x_j^2} \mathrm{d}r \mathrm{d}s \right|
     \\ &\leq  Cz^2 \int_0^1 \int_0^1 \frac{1}{V(x+e_j z rs)} \mathrm{d}r \mathrm{d}s
     \\ &\leq C \frac{z^2}{V(x)},
 \end{align*}
 where we have used $V(x+e_j z rs) \geq ( 1 + c_*^2|x + e_j z rs|^2)^{1/2}
 \geq (1 + c_*^2|x|^2)^{1/2} \geq C V(x)$.
 Hence all integrals in $L_0V$ are well defined and one easily finds that
 $|L_0V(x)| \leq CV(x)$, $x \in \R_+^m$.
 Applying the It\'o formula gives $V(Y_t^x) = V(x) + \int_0^tL_0V(Y_s^x)\mathrm{d}s + M_t(V)$, where $(M_t(V))_{t \geq 0}$ is a local martingale.
 Using the fact that $Y_t^x$ has finite first moment combined with the particular form of $M_t(V)$, one can easily show that $(M_t(V))_{t \geq 0}$ is, indeed, a true martingale. Hence taking expectations gives $\E[ V(Y_t^x) ] = V(x) + \int_0^t \E[L_0V(Y_s^x)] \mathrm{d}s$, i.e., $V$ belongs to the domain of the extended generator $L_0$ and has the desired form.

 It remains to prove \eqref{eq:03}. By continuity,
 for $|x|_M \leq 1$ one clearly has
 $L_0V(x) \leq |L_0V(x)| \leq C$. Take $x \in \R_+^m$ with $|x|_M > 1$.
 For the drift we obtain
  \begin{align*}
     \langle b, \nabla V(x) \rangle &\le |\langle b, \nabla V(x) \rangle |\le C.
 \end{align*}
 Likewise, using the identity $M\beta + \beta^{\top}M = -\mathbbm{1}$ we find that
 \begin{align*}
 \langle \beta x, \nabla V(x) \rangle &= \frac{1}{2} \langle M\beta x + \beta^{\top}Mx, x \rangle V(x)^{-1}
 \\ &\leq - \frac{1}{2}|x|^2 V(x)^{-1}
 \\ &\leq - \frac{1}{2(c^*)^{2}}|x|_M^2 V(x)^{-1}
 \\ &\leq - \frac{c_*^2}{2\sqrt{2}(c^*)^{2}}|x|_M
 \\ &\leq - \frac{c_*^2}{4(c^*)^{2}}V(x),
 \end{align*}
 where we have used \eqref{equivalence norms}
 and $V(x) \leq \sqrt{2}|x|_M$ since $|x|_M > 1$. For the state-independent jumps we obtain
 \[
  \int_{\{ |z| \leq 1\} } \left( V(x+z) - V(x) \right) \nu(\mathrm{d}z)
  \leq C,
 \]
 while the state-dependent jumps can be estimated by
 \begin{align*}
     &\ \sum_{j=1}^{m}\sigma_j x_j \int_{0}^{\infty}\left( V(x + e_j z) - V(x) - z \frac{\partial V(x)}{\partial x_j} \right) \mu_{\alpha_j}(\mathrm{d}z)
     \\ &= \sum_{j=1}^{m}\sigma_j x_j \int_{0}^{R}\left( V(x + e_j z) - V(x) - z \frac{\partial V(x)}{\partial x_j} \right) \mu_{\alpha_j}(\mathrm{d}z)
     \\ & \ \ \ + \sum_{j=1}^{m}\sigma_j x_j \int_{R}^{\infty}\left( V(x + e_j z) - V(x) - z \frac{\partial V(x)}{\partial x_j} \right) \mu_{\alpha_j}(\mathrm{d}z)
     \\ &\leq C \sum_{j=1}^{m}\frac{x_j}{V(x)} \int_{0}^{R} z^2 \mu_{\alpha_j}(\mathrm{d}z)
      + C \sum_{j=1}^{m}x_j \int_{R}^{\infty}z \mu_{\alpha_j}(\mathrm{d}z)
     \\ &\leq C \max_{j \in \{1,\dots,m\}} \int_{0}^{R}z^2 \mu_{\alpha_j}(\mathrm{d}z) + C \max_{j \in \{1,\dots, m\}}\int_R^{\infty}z \mu_{\alpha_j}(\mathrm{d}z) V(x),
 \end{align*}
 where $R > 0$ is some constant to be fixed below.
 Combining all estimates we obtain
 \[
  L_0V(x) \leq C\left( 1 + \max_{j \in \{1,\dots,m\}} \int_{0}^{R}z^2 \mu_{\alpha_j}(\mathrm{d}z)\right) - \left( \frac{c_*^2}{4(c^*)^{2}} - C\max_{j \in \{1,\dots, m\}}\int_R^{\infty}z \mu_{\alpha_j}(\mathrm{d}z)\right)V(x).
 \]
 Choosing $R$ large enough such that
 \[
 C\max_{j \in \{1,\dots, m\} } \int_R^{\infty}z \mu_{\alpha_j}(\mathrm{d}z) \le \frac{c_*^2}{8(c^*)^{2}},
 \]
 the assertion is proved.
\end{proof}

\section{Proof of Theorem \ref{th:04}}

Let $\lambda>0$ and $(a_{1},\dots,a_{m})$ be the anisotropy defined
in \eqref{MAIN:04}. The anisotropic H\"older-Zygmund space $C_{b}^{\lambda,a}(\R^{m})$
is defined as the Banach space of functions $\phi$ with finite norm
\[
\|\phi\|_{C_{b}^{\lambda,a}}=\|\phi\|_{\infty}+\sum\limits _{k=1}^{m}\sup\limits _{h\in[-1,1]}|h|^{-\lambda/a_{k}}\|\Delta_{he_{k}}\phi\|_{\infty}.
\]
The following lemma provides our main technical tool for the proof
of Theorem \ref{th:04}.
\begin{Lemma}\label{LEMMA:02}
Let $\lambda,\eta>0$
be such that $(\lambda+\eta)/a_{k}\in(0,1)$ for all $k=1,\dots,d$.
Suppose that $G$ is a finite measure over $\R^{m}$ and there exists
$A>0$ such that for all $\phi\in C_{b}^{\eta,a}(\R^{m})$, all $k=1,\dots,m$
and all $h\in[-1,1]$
\begin{align}
\left|\int_{\R^{m}}(\phi(x+he_{k})-\phi(x))G(\mathrm{d}x)\right|\leq A\|\phi\|_{C_{b}^{\eta,a}}|h|^{(\lambda+\eta)/a_{k}}.\label{EQ:32}
\end{align}
Then there exists $g\in B_{1,\infty}^{\lambda,a}(\R^{m})$ such that
$G(\mathrm{d}x)=g(x)\mathrm{d}x$ and
\begin{align}
\|g\|_{B_{1,\infty}^{\lambda,a}}\leq G(\R^{m})+3mA(2m)^{\eta/\lambda}\left(1+\frac{\lambda}{\eta}\right)^{1+\frac{\eta}{\lambda}}.\label{MAIN:05}
\end{align}
\end{Lemma}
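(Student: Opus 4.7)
The plan is to combine mollification at an anisotropic scale $\epsilon$ with the $L^1/L^\infty$ duality, using the hypothesis \eqref{EQ:32} (which only controls $G$ against H\"older test functions) to control the mollified density $g_\epsilon := G*\rho_\epsilon$. I would choose $\rho \in C_c^\infty(\R^m)$ nonnegative with $\int \rho = 1$ and set $\rho_\epsilon(x) := \epsilon^{-m}\rho(\epsilon^{-a_1}x_1, \dots, \epsilon^{-a_m}x_m)$, which has the correct anisotropic scaling. The starting observation is that for any $\phi \in L^\infty(\R^m)$ with $\|\phi\|_\infty \leq 1$, Fubini yields
\[
\int \phi(x)\,\Delta_{he_k}g_\epsilon(x)\,\mathrm{d}x = \int \Delta_{-he_k}(\phi*\check\rho_\epsilon)(y)\,G(\mathrm{d}y),
\]
and standard estimates show $\|\phi*\check\rho_\epsilon\|_{C_b^{\eta,a}} \leq C_0\epsilon^{-\eta}$ uniformly in $\|\phi\|_\infty \leq 1$. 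Applying the hypothesis and taking the supremum over $\phi$ yields
\[
\|\Delta_{he_k}g_\epsilon\|_{L^1} \leq C_0 A \epsilon^{-\eta}|h|^{(\lambda+\eta)/a_k}.
\]

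The next step is to pass from the mollified densities to the limit $g$. I would show that $(g_\epsilon)_{\epsilon > 0}$ is Cauchy in $L^1$ by writing $g_{2^{-n}} - g_{2^{-(n+1)}} = G * \sigma_n$ with $\sigma_n := \rho_{2^{-n}} - \rho_{2^{-(n+1)}}$ having mean zero. The mean-zero property is crucial: using the identity $\phi*\check\sigma_n(y) = \int (\phi(y+z)-\phi(y))\sigma_n(z)\,\mathrm{d}z$ together with the telescope $\Delta_z\phi = \sum_k\Delta_{z_k e_k}\tilde\phi_k$ (where each $\tilde\phi_k$ has the same $C^{\eta,a}$ norm as $\phi$) reduces $\|G*\sigma_n\|_{L^1}$ to a sum of quantities controlled by the hypothesis, after an auxiliary mollification of $\phi \in L^\infty$. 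Optimizing the auxiliary scale against the trivial bound should give $\|G*\sigma_n\|_{L^1} \lesssim A\cdot 2^{-n\lambda}$, so that $(g_\epsilon)$ converges in $L^1$ to a density $g$ of $G$ with the quantitative rate $\|g - g_\epsilon\|_{L^1}\lesssim A\epsilon^{\lambda}$.

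Combining both pieces, for each $h\in[-1,1]$ and $k\in\{1,\dots,m\}$ I would balance by choosing $\epsilon = |h|^{1/a_k}$ and split
\[
\|\Delta_{he_k}g\|_{L^1} \leq \|\Delta_{he_k}g_\epsilon\|_{L^1} + 2\|g - g_\epsilon\|_{L^1} \leq C A|h|^{\lambda/a_k},
\]
which together with $\|g\|_{L^1}\leq G(\R^m)$ delivers the Besov estimate. The main obstacle is the second step: the bound from the first step alone blows up as $\epsilon\to 0$, while the trivial bound $\|\Delta_{he_k}g_\epsilon\|_{L^1}\leq 2G(\R^m)$ is too weak to control the seminorm of the limiting density. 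The Cauchy argument via the mean-zero mollifier increments $\sigma_n$ is what bridges the gap, and the explicit factors $(2m)^{\eta/\lambda}(1+\lambda/\eta)^{1+\eta/\lambda}$ in \eqref{MAIN:05} should emerge from optimizing the auxiliary mollification scale in this Cauchy argument and summing over the $m$ coordinate directions.
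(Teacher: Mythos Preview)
The paper does not prove this lemma; immediately after the statement it records that the isotropic case is due to \cite[Lemma~2.1]{DF13} and \cite{DR14}, and that the anisotropic version is quoted from \cite{FJR18a}. So there is no in-paper argument to compare against. Your outline --- anisotropic mollification $g_\epsilon=G*\rho_\epsilon$, the duality bound $\|\Delta_{he_k}g_\epsilon\|_{L^1}\le C_0A\,\epsilon^{-\eta}|h|^{(\lambda+\eta)/a_k}$, a Cauchy estimate $\|g-g_\epsilon\|_{L^1}\le CA\,\epsilon^\lambda$, and the choice $\epsilon=|h|^{1/a_k}$ --- is precisely the scheme used in those references.

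One point in your Cauchy step deserves tightening. You propose to mollify the test function $\phi\in L^\infty$ at an auxiliary scale $\delta$ and then ``optimize against the trivial bound'' for the remainder. But the remainder $\langle G*\sigma_n,\,\phi-\phi*\check{\rho}_\delta\rangle$ rewrites as $\langle G*\sigma_n-(G*\sigma_n)*\rho_\delta,\,\phi\rangle$, which is of the same nature as what you are estimating, while the crude bound $2G(\R^m)\|\phi-\phi*\check{\rho}_\delta\|_\infty$ carries no decay in $n$ for generic $\phi\in L^\infty$. The non-circular way to run this step --- and the one in the cited proofs --- is to stay on the primal side and feed your first-step increment bound back in: from
\[
g_{\epsilon'}-g_\epsilon=\bigl(g_{\epsilon'}-g_{\epsilon'}*\rho_\epsilon\bigr)-\bigl(g_\epsilon-g_\epsilon*\rho_{\epsilon'}\bigr)
\]
together with $\|f-f*\rho_\delta\|_{L^1}\le\sum_k\int\rho_\delta(z)\|\Delta_{-z_ke_k}f\|_{L^1}\,\mathrm{d}z$ applied to $f=g_{\epsilon'}$ and $f=g_\epsilon$, one gets $\|g_{\epsilon/2}-g_\epsilon\|_{L^1}\le CmA\,\epsilon^\lambda$ directly. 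With this adjustment your final balancing step goes through and produces the constants in \eqref{MAIN:05}.
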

This lemma was first proved in \cite[Lemma 2.1]{DF13}
and \cite{DR14} for the isotropic case $a_{1}=\dots=a_{m}$. Above
anisotropic version is due to \cite{FJR18a}. The essential step in
the process of proving Theorem \ref{th:04} is based on a suitable
application of this lemma to the finite measure $G_{t}(\mathrm{d}y)=\rho_{\delta}(y)P_{t}(x,\mathrm{d}y)$.
In order to prove \eqref{EQ:32} for $G_{t}$, we use an approximation
similar to \cite{DF13,FJR18,FJR18a}. From now on we fix $x\in\R_{+}^{m}$
and let $X^{x}=(X^{x}(t))_{t\geq0}$ be the unique solution to \eqref{eq: a roots model}
with $X^{x}(0)=x$ and $\nu$ satisfying
\[
\int_{\R_{+}^{m}}\mathbbm{1}_{\{|z|>1\}}|z|^{1+\tau}\nu(\mathrm{d}z)<\infty
\]
for some $\tau>0$. To simplify the notation we let $(X(t))_{t\geq0}$
stand for $(X^{x}(t))_{t\geq0}$.

\subsection{Short time approximation}

For $\varepsilon\in(0,1\wedge t)$, define the approximation $X^{\e}(t)=(X_{1}^{\e}(t),\dots,X_{m}^{\e}(t))$
by
\begin{align}
X_{i}^{\e}(t) & =X_{i}(t-\e)+\left(b_{i}+\sum_{k=1}^{m}\beta_{ik}X_{k}(t-\e)\right)\e\label{eq:07}\\
 & \ \ \ +\sigma_{i}^{1/\alpha_{i}}X_{i}(t-\e)^{1/\alpha_{i}}(Z_{i}(t)-Z_{i}(t-\e))+(J_{i}(t)-J_{i}(t-\e)),\nonumber
\end{align}
where $i=1,\dots,m$. Define $\kappa_{1},\dots,\kappa_{m}>0$ by
\begin{align}
\kappa_{i}=\min\left\{ 1+\frac{1}{\alpha_{\max}},\frac{1}{\alpha_{i}}+\frac{1}{\alpha_{i}^{2}}\right\} ,\qquad i=1,\dots,m.\label{eq:14}
\end{align}
The next proposition shows that the convergence rate for $X_{i}^{\e}(t)\to X_{i}(t)$
as $\e\to0$ is precisely given by $\kappa_{i}$.
\begin{Proposition}\label{PROP:00}
Let $i\in\{1,\dots,m\}$ be arbitrary. The following assertions hold:
\begin{enumerate}
\item[(a)] For each $\eta\in(0,\left(1+\tau\right)\wedge\alpha_{\min})$
and $T\ge1$, there exists a constant $C=C(\eta,T)>0$
such that, for all $0\leq s\leq t\leq s+1 \le T$, it
holds that
\[
\E[|X_{i}(t)-X_{i}(s)|^{\eta}]\leq C(1+|x|)^{\eta}(t-s)^{\eta/\alpha_{i}}.
\]
\item[(b)] For each $\eta\in(0,1)$ and $T>0$, there exists
a constant $C=C(\eta,T)>0$ such that
\[
\E[|X_{i}(t)-X_{i}^{\e}(t)|^{\eta}]\leq C(1+|x|)^{\eta}\e^{\eta\kappa_{i}},\ \ t\in(0,T],\ \ \e\in(0,1\wedge t).
\]
\end{enumerate}
\end{Proposition}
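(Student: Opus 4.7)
The plan is to establish (a) first and then use it as the main input for (b). Throughout we use the elementary bound $|a+b+c|^\eta \leq C(|a|^\eta + |b|^\eta + |c|^\eta)$, valid for $\eta > 0$ with a suitable constant $C = C(\eta)$.

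For part (a), writing the increment $X_i(t) - X_i(s)$ via the SDE \eqref{eq: a roots model} as the sum of a drift integral, a spectrally positive $\alpha_i$-stable integral, and the subordinator increment $J_i(t) - J_i(s)$ reduces the estimate to three separate $L^\eta$ bounds. The drift is handled by Jensen's inequality and Proposition \ref{moment estimate} (the moment condition \eqref{eq:10} for $\eta = 1$ follows from the assumption $\int_{|z|>1}|z|^{1+\tau}\nu(\mathrm{d}z) < \infty$), producing a contribution of order $(1+|x|)^\eta (t-s)^\eta$. The stable integral is controlled via the $L^\eta$-Burkholder-Davis-Gundy inequality valid for $\eta \in (0, \alpha_i)$,
\[
\E\left[\left|\int_s^t \sqrt[\alpha_i]{\sigma_i X_i(r)}\,\mathrm{d}Z_i(r)\right|^\eta\right] \leq C\,\E\left[\left(\int_s^t \sigma_i X_i(r)\,\mathrm{d}r\right)^{\eta/\alpha_i}\right],
\]
combined with Jensen's inequality (since $\eta/\alpha_i < 1$) and Proposition \ref{moment estimate}, giving order $(1+|x|)^{\eta/\alpha_i}(t-s)^{\eta/\alpha_i}$. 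Finally, the contribution of $J_i$ is bounded by splitting into small and big jumps and invoking the moment assumption to produce $\E[|J_i(t) - J_i(s)|^\eta] \leq C(t - s)^{\eta \wedge 1}$. Since $t - s \leq 1$ and $\eta/\alpha_i \leq \min\{1,\eta\}$, all three contributions are dominated by $C(1+|x|)^\eta (t-s)^{\eta/\alpha_i}$.

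For part (b), the essential observation from \eqref{eq:07} is that the subordinator increment cancels, so
\[
X_i(t) - X_i^\e(t) = \int_{t-\e}^t \sum_{k=1}^m \beta_{ik}(X_k(r) - X_k(t-\e))\,\mathrm{d}r + \int_{t-\e}^t \sqrt[\alpha_i]{\sigma_i}\left(X_i(r)^{1/\alpha_i} - X_i(t-\e)^{1/\alpha_i}\right)\mathrm{d}Z_i(r).
\]
For the drift contribution Fubini plus Jensen reduce the $L^\eta$-estimate to $\int_{t-\e}^t \E[|X_k(r) - X_k(t-\e)|]\,\mathrm{d}r$, which by part (a) with exponent $1$ is of order $(1+|x|)\e^{1+1/\alpha_k} \leq (1+|x|)\e^{1+1/\alpha_{\max}}$, producing rate $\e^{\eta(1+1/\alpha_{\max})}$ after raising to the $\eta$-th power. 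For the stable integral, combining the BDG inequality above with the concavity bound $|a^{1/\alpha_i} - b^{1/\alpha_i}|^{\alpha_i} \leq |a - b|$ (valid for $a,b \geq 0$, $\alpha_i > 1$) and again invoking part (a) with exponent $1$ yields a bound of rate $\e^{(\eta/\alpha_i)(1+1/\alpha_i)} = \e^{\eta(1/\alpha_i + 1/\alpha_i^2)}$. The resulting exponent is the minimum of these two, which is exactly $\eta \kappa_i$ as defined in \eqref{eq:14}, and absorbing $(1+|x|)^{\eta/\alpha_i} \leq (1+|x|)^\eta$ completes the argument.

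The principal technical hurdle is the $L^\eta$-Burkholder-Davis-Gundy estimate for the spectrally positive $\alpha_i$-stable integral with $\eta < \alpha_i$; coupled with the concavity inequality $|a^{1/\alpha_i} - b^{1/\alpha_i}|^{\alpha_i} \leq |a - b|$, it is precisely this that produces the unusual exponent $1/\alpha_i + 1/\alpha_i^2$ and thus the definition of $\kappa_i$ in \eqref{eq:14}.
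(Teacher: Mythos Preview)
Your overall strategy matches the paper's exactly: decompose the increment into drift, $\alpha_i$-stable integral, and subordinator for (a), then use (a) as the engine for (b) after observing that the subordinator increment cancels in $X_i(t)-X_i^\e(t)$, and exploit the concavity bound $|a^{1/\alpha_i}-b^{1/\alpha_i}|^{\alpha_i}\le |a-b|$ to recycle the estimate from (a). The main technical difference lies in how you control the stable integral. The paper uses Lemma~\ref{LEMMA:00}, a supremum-based moment bound
\[
\E\Big[\Big|\int_s^t H\,\mathrm{d}Z_i\Big|^{\eta}\Big]\le C(t-s)^{\eta/\alpha_i}\sup_{u\in[s,t]}\big(\E[|H(u)|^{\gamma_i}]\big)^{\eta/\gamma_i},\qquad \alpha_i<\gamma_i\le 2,
\]
which forces the introduction of an auxiliary exponent $\gamma_i>\alpha_i$ with $\gamma_i/\alpha_i<(1+\tau)\wedge\alpha_{\min}$ and a second appeal to Proposition~\ref{moment estimate} at exponent $\gamma_i/\alpha_i$. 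Your integral-form BDG inequality
\[
\E\Big[\Big|\int_s^t H\,\mathrm{d}Z_i\Big|^{\eta}\Big]\le C\,\E\Big[\Big(\int_s^t|H|^{\alpha_i}\,\mathrm{d}r\Big)^{\eta/\alpha_i}\Big]
\]
is cleaner: combined with Jensen (using $\eta/\alpha_i<1$) it reduces everything to first moments and avoids $\gamma_i$ altogether. This inequality is correct for predictable integrands against (spectrally positive) $\alpha$-stable martingales, but it is not among the lemmas collected in the paper, so you should cite a source for it.

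One small gap in (a): the statement allows $\eta\in(0,(1+\tau)\wedge\alpha_{\min})$, hence possibly $\eta>1$, and in that range Jensen applied to the drift goes the wrong way (the map $r\mapsto r^{\eta}$ is convex). The paper splits into $\eta\le 1$ (Jensen) and $\eta\in(1,\alpha_{\min})$ (H\"older plus Proposition~\ref{moment estimate} at exponent $\eta$). You should add the H\"older branch; for (b), where only $\eta\in(0,1)$ is needed, your argument is complete.
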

\begin{proof}
Fix constants $\gamma_{1},\dots,\gamma_{m}$,
satisfying for each $i=1,\dots,m$
\begin{align*}
\gamma_{i}\in(\alpha_{i},2)\ \text{ and }\ \frac{\gamma_{i}}{\alpha_{i}}<\min\{1+\tau,\alpha_{\min}\}.
\end{align*}
In the following we will use $C$ to denote a positive
constant, whose exact value is not important and may change from time
to time.

\textit{(a)} Write $\E[|X_{i}(t)-X_{i}(s)|^{\eta}]\leq R_{1}+R_{2}+R_{3}$,
where
\begin{align*}
R_{1} & =C\E\left[\left|\int_{s}^{t}\left(b_{i}+\sum_{k=1}^{m}\beta_{ik}X_{k}(u)\right)\mathrm{d}u\right|^{\eta}\right],\\
R_{2} & =C\E\left[\left|\int_{s}^{t}X_{i}(u-)^{1/\alpha_{i}}\mathrm{d}Z_{i}(u)\right|^{\eta}\right],\\
R_{3} & =C\E\left[|J_{i}(t)-J_{i}(s)|^{\eta}\right].
\end{align*}
If $\eta\in(0,1]$, then we use the Jensen inequality and Proposition
\ref{moment estimate} to obtain
\begin{align*}
R_{1} & \leq C\left(\E\left[\left|\int_{s}^{t}\left(b_{i}+\sum_{k=1}^{m}\beta_{ik}X_{k}(u)\right)\mathrm{d}u\right|\right]\right)^{\eta}\\
 & \leq C(t-s)^{\eta}+C(t-s)^{\eta}\sup_{u\in[s,t]}\left(\E[|X(u)|]\right)^{\eta}\\
 & \leq C(1+|x|)^{\eta}(t-s)^{\eta}.
\end{align*}
If $\eta\in(1,\alpha_{\min})$, then we use the H\"older inequality
with $\frac{1}{\eta}+\frac{1}{\frac{\eta}{\eta-1}}=1$ to obtain
\begin{align*}
R_{1} & \leq C(t-s)^{\eta-1}\int_{s}^{t}\E\left[\left|b_{i}+\sum_{k=1}^{m}\beta_{ik}X_{k}(u)\right|^{\eta}\right]\mathrm{d}u\\
 & \leq C(t-s)^{\eta-1}(t-s)\left(1+\sup\limits _{u\in[s,t]}\E[|X(u)|^{\eta}]\right)\\
 & \leq C\left(1+|x|\right)^{\eta}(t-s)^{\eta}.
\end{align*}
Combining both cases $\eta\in(0,1]$ and $\eta\in(1,\alpha_{\min})$
we find that $R_{1}\leq C\left(1+|x|\right)^{\eta}(t-s)^{\eta}$.
For the second term we apply Lemma \ref{LEMMA:00} to obtain
\begin{align}
R_{2}\leq C(t-s)^{\eta/\alpha_{i}}\sup\limits _{u\in[s,t]}\left(\E[X_{i}(u)^{\gamma_{i}/\alpha_{i}}]\right)^{\eta/\gamma_{i}}.\label{eq:06}
\end{align}
Since $\gamma_{i}$ also satisfies $\gamma_{i}/\alpha_{i}<\min\{1+\tau,\alpha_{\min}\}$,
we may apply Proposition \ref{moment estimate} to find that
\begin{align*}
\sup\limits _{u\in[s,t]}\E[X_{i}(u)^{\gamma_{i}/\alpha_{i}}]\leq\sup\limits _{u\in[s,t]}\E[|X(u)|^{\gamma_{i}/\alpha_{i}}]\leq C\left(1+|x|\right)^{\gamma_{i}/\alpha_{i}}.
\end{align*}
Inserting this into \eqref{eq:06} gives $R_{2}\leq C(1+|x|)^{\eta/\alpha_{i}}(t-s)^{\eta/\alpha_{i}}$.
For the last term we may apply the estimates for the stochastic integrals
from \cite[appendix]{FJR18} to find that $R_{3}=C\E[J_{i}(t-s)^{\eta}]\leq C(t-s)^{\eta\wedge1}$.
Combining all estimates for $R_{1},R_{2},R_{3}$ yields
\begin{align*}
\E[|X_{i}(t)-X_{i}(s)|^{\eta}] & \leq C(1+|x|)^{\eta}(t-s)^{\eta\wedge1}+C(1+|x|)^{\eta/\alpha_{i}}(t-s)^{\eta/\alpha_{i}}\\
 & \leq C(1+|x|)^{\eta}(t-s)^{\eta/\alpha_{i}}.
\end{align*}
This proves the assertion.

\textit{(b)} Write $\E[|X_{i}(t)-X_{i}^{\e}(t)|^{\eta}]\leq R_{1}+R_{2}$,
where
\begin{align*}
R_{1} & =\E\left[\left|\int_{t-\e}^{t}\left(\sum_{k=1}^{m}\beta_{ik}(X_{k}(u)-X_{k}(t-\e))\right)\mathrm{d}u\right|^{\eta}\right],\\
R_{2} & =\sigma_{i}^{\eta/\alpha_{i}}\E\left[\left|\int_{t-\e}^{t}(X_{i}(u-)^{1/\alpha_{i}}-X_{i}(t-\e)^{1/\alpha_{i}})\mathrm{d}Z_{i}(u)\right|^{\eta}\right].
\end{align*}
For the first term we use part (a) and the fact that $\eta\in(0,1)$
to obtain
\begin{align*}
R_{1} & \leq\left(\E\left[\int_{t-\e}^{t}\sum_{k=1}^{m}|\beta_{ik}||X_{k}(u)-X_{k}(t-\e)|\mathrm{d}u\right]\right)^{\eta}\\
 & \leq C\e^{\eta}\sum_{k=1}^{m}\sup\limits _{u\in[t-\e,t]}\left(\E\left[ |X_{k}(u)-X_{k}(t-\e)| \right]\right)^{\eta}\\
 & \leq C\e^{\eta}\sum_{k=1}^{m}(1+|x|)^{\eta}\e^{\eta/\alpha_{k}}\\
 & \leq C(1+|x|)^{\eta}\e^{\eta+\eta/\alpha_{\max}}.
\end{align*}
Let us turn to the second term. We use Lemma \ref{LEMMA:00} and then
$|y^{1/\alpha_{i}}-z^{1/\alpha_{i}}|\leq|y-z|^{1/\alpha_{i}}$
for $y,z\ge0$ to find that
\begin{align*}
R_{2} & \leq C\e^{\eta/\alpha_{i}}\sup\limits _{u\in[t-\e,t]}\left(\E\left[|X_{i}(u)^{1/\alpha_{i}}-X_{i}(t-\e)^{1/\alpha_{i}}|^{\gamma_{i}}\right]\right)^{\eta/\gamma_{i}}\\
 & \leq C\e^{\eta/\alpha_{i}}\sup\limits _{u\in[t-\e,t]}\left(\E[|X_{i}(u)-X_{i}(t-\e)|^{\gamma_{i}/\alpha_{i}}]\right)^{\eta/\gamma_{i}}.
\end{align*}
Since $\gamma_{i}/\alpha_{i}<\min\{1+\tau,\alpha_{\min}\}$, we may
apply part (a) which gives
\begin{align*}
\sup\limits _{u\in[t-\e,t]}\E[|X_{i}(u)-X_{i}(t-\e)|^{\gamma_{i}/\alpha_{i}}]\leq C(1+|x|)^{\gamma_{i}/\alpha_{i}}\e^{\gamma_{i}/\alpha_{i}^{2}}
\end{align*}
and hence
\[
R_{2}\leq C(1+|x|)^{\eta/\alpha_{i}}\e^{\frac{\eta}{\alpha_{i}}\left(1+1/\alpha_{i}\right)}.
\]
This proves the assertion. \end{proof}

\subsection{The key estimate}

Recall that $\kappa_{1},\dots,\kappa_{m}$ are defined in \eqref{eq:14}.
Based on the previous approximation we show the following.
\begin{Proposition}\label{PROP:01}
Let $t>0$ be arbitrary and fixed. Take $\varkappa\in(0,1/\alpha_{\max}]$
and let $\eta\in(0,\varkappa a_{\min})$. Then there
exists a constant $C>0$ such that, for any $\e\in(0,1\wedge t)$,
$h\in[-1,1]$, $\phi\in C_{b}^{\eta,a}(\R^{m})$ and $i\in\{1,\dots,m\}$,
\begin{align*}
 & \ \left|\E\left[\rho_{\delta}(X(t))\Delta_{he_{i}}\phi(X(t))\right]\right|\\
 & \leq C\|\phi\|_{C_{b}^{\eta,a}}(1+|x|)^{\varkappa}\left(|h|^{\eta/a_{i}}\e^{\varkappa/\alpha_{\max}}+|h|\e^{-1/\alpha_{i}}+\max\limits _{j\in\{1,\dots,m\}}\e^{\eta\kappa_{j}/a_{j}}\right).
\end{align*}
\end{Proposition}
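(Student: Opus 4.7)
The plan is to combine the short-time approximation $X^{\e}(t)$ of \eqref{eq:07} with a conditional discrete integration by parts, in the spirit of \cite{DF13, FJR18, FJR18a}. The three terms in the bound have distinct origins: the third term $\max_j \e^{\eta\kappa_j/a_j}$ will come from the error of replacing $X(t)$ by $X^{\e}(t)$; the second term $|h|\e^{-1/\alpha_i}$ will come from the $L^1$ smoothness of the conditional density of $X^{\e}_i(t)$; and the first term $|h|^{\eta/a_i}\e^{\varkappa/\alpha_{\max}}$ is a hybrid contribution that couples H\"older regularity of $\phi$ with H\"older regularity of the weight $\rho_\delta$.

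For the approximation step I would decompose
\[
\rho_\delta(X)\Delta_{he_i}\phi(X) - \rho_\delta(X^{\e})\Delta_{he_i}\phi(X^{\e}) = \rho_\delta(X)\bigl[\Delta_{he_i}\phi(X)-\Delta_{he_i}\phi(X^{\e})\bigr] + \bigl[\rho_\delta(X)-\rho_\delta(X^{\e})\bigr]\Delta_{he_i}\phi(X^{\e}).
\]
For the first summand, combining $|\rho_\delta| \leq \delta$ with the coordinatewise anisotropic H\"older estimate $|\phi(x)-\phi(y)| \leq \|\phi\|_{C^{\eta,a}_b}\sum_k |x_k-y_k|^{\eta/a_k}$ (each exponent $\eta/a_k \in (0,1)$ since $\eta < \varkappa a_{\min} \leq a_k$) and Proposition \ref{PROP:00}(b) yields, using $\eta/a_{\min} < \varkappa$ and $\kappa_k \geq 1/\alpha_{\max}$, a contribution of the form $(1+|x|)^{\varkappa}\max_j \e^{\eta\kappa_j/a_j}$, i.e.\ the third term. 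For the second summand I would first establish the pointwise inequality $|\rho_\delta(x)-\rho_\delta(y)| \leq \sum_k \min(2\delta, |x_k-y_k|^{1/\alpha_k})$ (easy from the definition of $\rho_\delta$) and then apply the elementary interpolation $\min(2\delta, z^{1/\alpha_k}) \leq (2\delta)^{1-\varkappa\alpha_k}z^{\varkappa}$, which is valid precisely because $\varkappa \alpha_k \leq \varkappa \alpha_{\max} \leq 1$; this produces $|\rho_\delta(X)-\rho_\delta(X^{\e})| \leq C_\delta \sum_k |X_k-X^{\e}_k|^{\varkappa}$, and combining with Proposition \ref{PROP:00}(b) at exponent $\varkappa \in (0,1)$, the lower bound $\kappa_k \geq 1/\alpha_{\max}$, and $|\Delta_{he_i}\phi| \leq \|\phi\|_{C^{\eta,a}_b}|h|^{\eta/a_i}$ delivers the first term.

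The core of the argument is the bound $|\E[\rho_\delta(X^{\e}(t))\Delta_{he_i}\phi(X^{\e}(t))]| \leq C\|\phi\|_\infty |h|\e^{-1/\alpha_i}$. Condition on
\[
\mathcal{G}_\e := \mathcal{F}_{t-\e}\vee \sigma(J(t)-J(t-\e))\vee \sigma\bigl(Z_j(t)-Z_j(t-\e):j\neq i\bigr),
\]
so that all coordinates of $X^{\e}(t)$ other than the $i$-th are $\mathcal{G}_\e$-measurable and $X^{\e}_i(t) = A + B\,(Z_i(t)-Z_i(t-\e))$ with $\mathcal{G}_\e$-measurable $A$ and $B := \sigma_i^{1/\alpha_i}X_i(t-\e)^{1/\alpha_i}$. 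By Proposition \ref{th:boundary behavior}, $B > 0$ a.s., so conditionally $X^{\e}_i(t)$ has the smooth density $q(y) = B^{-1}p_{Z,\e}((y-A)/B)$, where $p_{Z,\e}(s) = \e^{-1/\alpha_i}f_{\alpha_i}(s\e^{-1/\alpha_i})$. Writing $\widehat{X}(y)$ for $X^{\e}(t)$ with $i$-th coordinate replaced by $y$ and shifting variable in the $\phi(\cdot+he_i)$ summand, the conditional expectation becomes
\[
\int_{\mathbb{R}}\phi(\widehat{X}(y))\bigl[\rho_\delta(\widehat{X}(y-h))q(y-h)-\rho_\delta(\widehat{X}(y))q(y)\bigr]\mathrm{d}y.
\]
A mean-value estimate combined with $\|p_{Z,\e}'\|_{L^1} = C\e^{-1/\alpha_i}$ gives a conditional bound of order $|h|\e^{-1/\alpha_i}B^{-1}$; the pointwise estimate $\rho_\delta(\widehat{X}(y)) \leq y^{1/\alpha_i}$ together with the rescaling $y = A + B\e^{1/\alpha_i}u$ transfers a factor $B^{1/\alpha_i}\e^{1/\alpha_i^2}$ onto an integrable moment of $f_{\alpha_i}'$, and the remaining negative moments of $X_i(t-\e)$ are finite again by Proposition \ref{th:boundary behavior}.

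The principal obstacle is precisely this last cancellation. A naive integration by parts against $q$ loses a factor $B^{-1} = \sigma_i^{-1/\alpha_i}X_i(t-\e)^{-1/\alpha_i}$ which is singular at the boundary $\{X_i(t-\e) = 0\}$; the weight $\rho_\delta$ is engineered exactly so that its coordinatewise H\"older exponent $1/\alpha_i$ matches the stable scaling of $B$, compensating the singularity after rescaling and accounting for the specific rate $\e^{-1/\alpha_i}$ and the three-term splitting of the final estimate.
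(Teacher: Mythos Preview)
Your overall strategy is right, and your handling of the two approximation terms is essentially correct (in fact your use of Proposition~\ref{PROP:00}(b) for the $\rho_\delta$--difference even gives a slightly better $\e$--exponent than needed). The gap is in the core integration--by--parts step, and it comes from one specific choice: you replace $\rho_\delta(X(t))$ by $\rho_\delta(X^{\e}(t))$, whereas the paper replaces it by $\rho_\delta(X(t-\e))$.

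With your choice, the weight $\rho_\delta(\widehat{X}(y))$ still depends on the integration variable $y$ through its $i$-th slot, so after the shift you must control
\[
\int_{\R}\bigl|\rho_\delta(\widehat{X}(y-h))q(y-h)-\rho_\delta(\widehat{X}(y))q(y)\bigr|\,\mathrm{d}y.
\]
You invoke a ``mean-value estimate'', but $y\mapsto\rho_\delta(\widehat{X}(y))$ is not differentiable (only $1/\alpha_i$--H\"older), so a product-rule argument is not available. Splitting the difference instead leaves either a term of order $|h|^{1/\alpha_i}$ (from the H\"older increment of $\rho_\delta$, which is not one of the three terms in the stated bound) or a term carrying the singular factor $B^{-1}=\sigma_i^{-1/\alpha_i}X_i(t-\e)^{-1/\alpha_i}$. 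Your proposed compensation via $\rho_\delta(\widehat{X}(y))\leq y^{1/\alpha_i}$ and rescaling still leaves a negative power of $X_i(t-\e)$ (of order $(1-\alpha_i)/\alpha_i^2$), and your appeal to Proposition~\ref{th:boundary behavior} for finiteness of such negative moments is problematic on two counts: the constant there depends on $R$ with $|x|\leq R$, so you lose the explicit $(1+|x|)^{\varkappa}$ dependence required by the statement; and the estimate is for fixed time, not uniformly as $t-\e\searrow 0$.

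The paper's decomposition avoids all of this. With $\rho_\delta(X(t-\e))$ the weight is $\mathcal{F}_{t-\e}$--measurable and pulls straight out of the conditional integral, so the shift hits only the density $f_\e^Z$; then the exact pointwise bound $\rho_\delta(X(t-\e))\,X_i(t-\e)^{-1/\alpha_i}\leq 1$ cancels the singular $B^{-1}$ deterministically, with no moment estimate needed. Correspondingly, the first term $|h|^{\eta/a_i}\e^{\varkappa/\alpha_{\max}}$ in the paper arises from $\rho_\delta(X(t))-\rho_\delta(X(t-\e))$ via Proposition~\ref{PROP:00}(a), not from $\rho_\delta(X(t))-\rho_\delta(X^\e(t))$ via part~(b) as in your sketch. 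The fix to your argument is simply to freeze the weight at time $t-\e$ rather than at the approximation $X^\e(t)$.
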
 \begin{proof} For $\e\in(0,1\wedge t)$ let $X^{\e}(t)=(X_{1}^{\e}(t),\dots,X_{m}^{\e}(t))$
be given as in \eqref{eq:07}. Then
\begin{align*}
\left|\E\left[\rho_{\delta}(X(t))\Delta_{he_{i}}\phi(X(t))\right]\right|\leq R_{1}+R_{2}+R_{3},
\end{align*}
where $R_{1},R_{2},R_{3}$ are given by
\begin{align*}
R_{1} & =\left|\E\left[\Delta_{he_{i}}\phi(X(t))\left(\rho_{\delta}(X(t))-\rho_{\delta}(X(t-\e))\right)\right]\right|,\\
R_{2} & =\E\left[|\Delta_{he_{i}}\phi(X(t))-\Delta_{he_{i}}\phi(X^{\e}(t))|\rho_{\delta}(X(t-\e)\right],\\
R_{3} & =\left|\E\left[\rho_{\delta}(X(t-\e))\Delta_{he_{i}}\phi(X^{\e}(t))\right]\right|.
\end{align*}
For the first term we use $\rho_{\delta}\leq1$ and $\varkappa\le1/\alpha_{k}$
to find
\begin{align*}
|\rho_{\delta}(x)-\rho_{\delta}(y)| & \leq2\wedge\left(\sum_{k=1}^{m}|x_{k}^{1/\alpha_{k}}-y_{k}^{1/\alpha_{k}}|\right)\\
 & \leq\sum_{k=1}^{m}2\wedge|x_{k}-y_{k}|^{1/\alpha_{k}}\\
 & \leq C\sum_{k=1}^{m}|x_{k}-y_{k}|^{\varkappa},
\end{align*}
and hence deduce from Proposition \ref{PROP:00}.(a) that
\begin{align*}
R_{1} & \leq C\|\phi\|_{C_{b}^{\eta,a}}|h|^{\eta/a_{i}}\sum_{k=1}^{m}\E[|X_{k}(t)-X_{k}(t-\e)|^{\varkappa}]\\
 & \leq C\|\phi\|_{C_{b}^{\eta,a}}|h|^{\eta/a_{i}}\sum_{k=1}^{m}(1+|x|)^{\varkappa}\e^{\varkappa/\alpha_{k}}\\
 & \leq C\|\phi\|_{C_{b}^{\eta,a}}|h|^{\eta/a_{i}}(1+|x|)^{\varkappa}\e^{\varkappa/\alpha_{\max}}.
\end{align*}
For $R_{2}$ we first use that $\phi\in C_{b}^{\eta,a}(\R^{m})$,
then $\rho_{\delta}\leq1$ and finally Proposition \ref{PROP:00}.(b)
to obtain
\begin{align*}
R_{2} & \leq C\|\phi\|_{C_{b}^{\eta,a}}\max\limits _{j\in\{1,\dots,m\}}\E\left[|X_{j}(t)-X_{j}^{\e}(t)|^{\eta/a_{j}}\right]\\
 & \leq C\|\phi\|_{C_{b}^{\eta,a}}\max\limits _{j\in\{1,\dots,m\}}\e^{\eta\kappa_{j}/a_{j}}(1+|x|)^{\eta/a_{j}}\\
 & \leq C\|\phi\|_{C_{b}^{\eta,a}}(1+|x|)^{\varkappa}\max\limits _{j\in\{1,\dots,m\}}\e^{\eta\kappa_{j}/a_{j}}.
\end{align*}
Let us turn to $R_{3}$. Define $\sigma(x)=\mathrm{diag}((\sigma_{1}x_{1})^{1/\alpha_{1}},\dots,(\sigma_{m}x_{m})^{1/\alpha_{m}})$.
Let $f_{t}^{Z}$ be the density of $Z(t)=(Z_{1}(t),\dots,Z_{m}(t))$.
Using \eqref{eq:07} we find that
\[
X^{\e}(t)=U^{\e}(t)+\sigma(X(t-\e))(Z(t)-Z(t-\e)),
\]
with $U^{\e}(t)=(U_{1}^{\e}(t),\dots,U_{m}^{\e}(t))$ being given
by
\[
U_{i}^{\e}(t)=X_{i}(t-\e)+\left(b_{i}+\sum_{k=1}^{m}\beta_{ik}X_{k}(t-\e)\right)\e+(J_{i}(t)-J_{i}(t-\e)).
\]
Finally note that $\sigma(X(t-\e))^{-1}=\mathrm{diag}((\sigma_{1}X_{1}(t-\e))^{-1/\alpha_{1}},\dots,(\sigma_{m}X_{m}(t-\e))^{-1/\alpha_{m}})$
is well-defined, since $\P[X(t-\e)\in\R_{++}^{m}]=1$ holds by Proposition
\ref{th:boundary behavior}. Then we obtain for each $\e\in(0,1\wedge t)$,
\begin{align*}
R_{3} & =\left|\E\left[\int_{\R^{m}}\rho_{\delta}(X(t-\e))(\Delta_{he_{i}}\phi)(U^{\e}(t)+\sigma(X(t-\e))z)f_{\e}^{Z}(z)\mathrm{d}z\right]\right|\\
 & =\left|\E\left[\int_{\R^{m}}\rho_{\delta}(X(t-\e))\phi(U^{\e}(t)+\sigma(X(t-\e))z)(\Delta_{-h\sigma(X(t-\e))^{-1}e_{i}}f_{\e}^{Z})(z)\mathrm{d}z\right]\right|\\
 & \leq\|\phi\|_{\infty}\E\left[\rho_{\delta}(X(t-\e))\int_{\R^{m}}|(\Delta_{-h\sigma(X(t-\e))^{-1}e_{i}}f_{\e}^{Z})(z)|\mathrm{d}z\right]\\
 & \leq\|\phi\|_{\infty}|h|\sigma_{i}^{-1/\alpha_{i}}\E\left[\rho_{\delta}(X(t-\e))X_{i}(t-\e)^{-1/\alpha_{i}}\int_{\R^{m}}\left|\frac{\partial f_{\e}^{Z}(z)}{\partial z_{i}}\right|\mathrm{d}z\right]\\
 & \leq C\|\phi\|_{C_{b}^{\eta,a}}|h|\e^{-1/\alpha_{i}},
\end{align*}
where we have used Lemma \ref{LEMMA:01} and $\rho_{\delta}(x)x_{i}^{-1/\alpha_{i}}\leq1$.
Summing up the estimates for $R_{1},R_{2},R_{3}$ yields the assertion.
\end{proof}

\subsection{Concluding the proof of Theorem \ref{th:04}}

Below we provide the proof of Theorem \ref{th:04}. Fix $t>0$ and
$x\in\R_{+}^{m}$. We will show that Lemma \ref{LEMMA:02} applies
to the finite measure $G_{t}(x,\mathrm{d}y)=\rho_{\delta}(y)P_{t}(x,\mathrm{d}y)$.
Using the particular form of $\kappa_{j}$ we obtain $\kappa_{j}\alpha_{j}>1$
and hence $\kappa_{j}/a_{j}>1/\overline{\alpha}$ for all $j\in\{1,\dots,m\}$.
This implies
\[
\frac{a_{j}}{\kappa_{j}}\frac{1}{a_{i}}<\frac{\overline{\alpha}}{a_{i}}=\alpha_{i},\ \ i,j\in\{1,\dots,m\}.
\]
Hence we find $\eta\in(0,1)$ and $c_{1},\dots,c_{m}>0$ such that,
for all $i,j\in\{1,\dots,m\}$,
\[
0<\eta<a_{i}\varkappa,\qquad\frac{a_{j}}{\kappa_{j}}\frac{1}{a_{i}}<c_{i}<\alpha_{i}\left(1-\frac{\eta}{a_{i}}\right).
\]
Define
\[
\lambda=\min_{i,j\in\{1,\dots,m\}}\left\{ \varkappa c_{i}a_{i}/\alpha_{\max},\ a_{i}-\eta-\frac{a_{i}c_{i}}{\alpha_{i}},\ \eta\left(c_{i}a_{i}\frac{\kappa_{j}}{a_{j}}-1\right)\right\} >0.
\]
Let $\phi\in C_{b}^{\eta,a}(\R^{m})$. By Proposition \ref{PROP:01}
we obtain, for $h\in[-1,1]$, $\e=|h|^{c_{i}}(1\wedge t)$ and $i\in\{1,\dots,m\}$,
\begin{align*}
 & \ \left|\E\left[\rho_{\delta}(X(t))\Delta_{he_{i}}\phi(X(t))\right]\right|\\
 & \leq C\|\phi\|_{C_{b}^{\eta,a}}(1+|x|)^{\varkappa}\left(|h|^{\eta/a_{i}}\e^{\varkappa/\alpha_{\max}}+|h|\e^{-1/\alpha_{i}}+\max\limits _{j\in\{1,\dots,m\}}\e^{\eta\kappa_{j}/a_{j}}\right)\\
 & \leq\frac{C\|\phi\|_{C_{b}^{\eta,a}}}{(1\wedge t)^{1/\alpha_{i}}}(1+|x|)^{\varkappa}\left(|h|^{\eta/a_{i}+c_{i}\varkappa/\alpha_{\max}}+|h|^{1-c_{i}/\alpha_{i}}+\max\limits _{j\in\{1,\dots,m\}}|h|^{c_{i}\eta\kappa_{j}/a_{j}}\right)\\
 & =\frac{C\|\phi\|_{C_{b}^{\eta,a}}}{(1\wedge t)^{1/\alpha_{i}}}|h|^{\eta/a_{i}}(1+|x|)^{\varkappa}\left(|h|^{c_{i}\varkappa/\alpha_{\max}}+|h|^{1-\eta/a_{i}-c_{i}/\alpha_{i}}+\max\limits _{j\in\{1,\dots,m\}}|h|^{c_{i}\eta\kappa_{j}/a_{j}-\eta/a_{i}}\right)\\
 & \leq\frac{C\|\phi\|_{C_{b}^{\eta,a}}}{(1\wedge t)^{1/\alpha_{i}}}(1+|x|)^{\varkappa}|h|^{(\eta+\lambda)/a_{i}}.
\end{align*}
This shows that Lemma \ref{LEMMA:02} is applicable to $G_{t}(x,\mathrm{d}y)$.
Hence $G_{t}(x,\mathrm{d}y)$ has a density $g_{t}(x,y)$, i.e., $\rho_{\delta}(y)P_{t}(x,\mathrm{d}y)=g_{t}(x,y)\mathrm{d}y$.
In view of \eqref{MAIN:05} this density satisfies
\begin{align*}
\|g_{t}(x,\cdot)\|_{B_{1,\infty}^{\lambda,a}(\R_{+}^{m})} & \leq\int_{\R_{+}^{m}}\rho_{\delta}(y)P_{t}(x,\mathrm{d}y)+C(t)(1+|x|)^{\varkappa}(1\wedge t)^{-1/\alpha_{\min}}\\
 & \leq C(t)(1+|x|)^{\varkappa}(1\wedge t)^{-1/\alpha_{\min}},
\end{align*}
where we have used $\rho\leq1$ and $C(t)$ is a generic constant
which is locally bounded in $t\geq0$. Since $\rho_{\delta}(y)>0$
for $y\in\R_{++}^{m}$, $P_{t}(x,\mathrm{d}y)$ has also a density
$p_{t}(x,y)$ on $\R_{++}^{m}$ which gives $P_{t}(x,\mathrm{d}y)=p_{t}(x,y)\mathrm{d}y+P_{t}^{\mathrm{sing}}(x,\mathrm{d}y)$,
where $P_{t}^{\mathrm{sing}}(x,\mathrm{d}y)$ is supported on $\partial\R_{+}^{m}$.
Using Proposition \ref{th:boundary behavior} we conclude that $P_{t}^{\mathrm{sing}}(x,\mathrm{d}y)=0$
and hence $p_{t}^{\delta}(x,y)=g_{t}(x,y)$. This proves the assertion
of Theorem \ref{th:04}.

\section{Some results on ergodicity in total variation norm}

In this section we briefly summarize some results on geometric ergodicity
in the total variation distance for continuous-time Markov processes.
For additional details we refer to \cite{H16} and \cite{Kulik18}.
Let $E$ be a Polish space and let $(X_{t})_{t\geq0}$ be a Feller
process on $E$. Denote by $(P_{t}(x,\mathrm{d}y))_{t\geq0}$ its
transition probabilities and by $L$ the extended generator. \begin{Theorem}\label{th:3}
Suppose that the following conditions are satisfied:
\begin{enumerate}
\item[(a)] There exists a continuous function $V:E\longmapsto[1,\infty)$ which
belongs to the domain of the extended generator such that
\[
LV(x)\leq-aV(x)+M,\qquad x\in E,
\]
where $a,M>0$ are some constants. Moreover, for each $R>0$ the level
sets $\{(x,y)\in E^{2}\ |\ V(x)+V(y)\leq R\}$ are compact.
\item[(b)] For each $R>0$, there exists $h>0$ and $\delta\in(0,2)$ such that
\[
\|P_{h}(x,\cdot)-P_{h}(y,\cdot)\|_{\mathrm{TV}}\leq2-\delta
\]
holds for all $x,y\in E$ with $V(x)+V(y)\leq R$.
\end{enumerate}
Then there exists constants $C,\beta>0$ such that for all $t\geq0$
and $x,y\in E$,
\[
\|P_{t}(x,\cdot)-P_{t}(y,\cdot)\|_{\mathrm{TV}}\leq Ce^{-\beta t}\left(V(x)+V(y)\right).
\]
Moreover, there exists a unique invariant probability measure $\pi$.
This measure satisfies
\begin{align}
\int_{E}V(x)\pi(\mathrm{d}x)<\infty\label{eq:05}
\end{align}
and for all $t\geq0$ and $x\in E$ one has
\[
\|P_{t}(x,\cdot)-\pi\|_{\mathrm{TV}}\leq Ce^{-\beta t}\left(V(x)+\int_{E}V(y)\pi(\mathrm{d}y)\right).
\]
\end{Theorem}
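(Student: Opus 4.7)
The plan is a Hairer--Mattingly coupling argument: discretize time, establish a one-step contraction in a weighted total variation distance, and then interpolate back to continuous $t$. First I would convert the continuous-time drift to discrete time. Since $V$ lies in the domain of the extended generator with $LV\leq -aV+M$, Dynkin's formula applied to $t\mapsto e^{at}\E[V(X_t)]$ yields
\[
\E[V(X_t^x)]\leq e^{-at}V(x)+\tfrac{M}{a}(1-e^{-at}),\qquad t\geq 0.
\]
Choose $R>4M/a$ and apply condition (b) with this $R$ to obtain $h>0$ and $\delta\in(0,2)$; set $Q:=P_h$. Then $QV\leq \gamma V+K$ with $\gamma:=e^{-ah}\in(0,1)$ and $K:=\tfrac{M}{a}(1-\gamma)$, together with $\|Q(x,\cdot)-Q(y,\cdot)\|_{\mathrm{TV}}\leq 2-\delta$ whenever $V(x)+V(y)\leq R$. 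Note that $K/(1-\gamma)=M/a<R/4$ independently of $h$, so the choice of $R$ and $h$ is consistent.

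The heart of the proof is a one-step contraction in the weighted semi-metric
\[
\rho_\beta(\mu,\nu):=\inf_{\pi\in\Pi(\mu,\nu)}\int\bigl(1+\beta V(x)+\beta V(y)\bigr)\mathbbm{1}_{\{x\neq y\}}\,\pi(dx,dy)
\]
on probability measures, which dominates a positive multiple of $\|\mu-\nu\|_{\mathrm{TV}}$. The goal is to produce $\beta>0$ small and $\alpha\in(0,1)$ with $\rho_\beta(Q\mu,Q\nu)\leq \alpha\,\rho_\beta(\mu,\nu)$. It suffices to check this for Dirac masses, and the argument splits in two cases. When $V(x)+V(y)>R$, the drift inequality yields $\E[V(X_h^x)+V(X_h^y)]\leq \gamma(V(x)+V(y))+2K$, which together with $R>4K/(1-\gamma)$ contracts the weight by a factor close to $\gamma$. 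When $V(x)+V(y)\leq R$, condition (b) supplies a coupling of $Q(x,\cdot)$ and $Q(y,\cdot)$ whose disagreement set has mass at most $1-\delta/2$, and on this set the weight $1+\beta V+\beta V$ stays bounded via the Lyapunov estimate. A simultaneous tuning of small $\beta$ and large $R$ enforces a common contraction rate $\alpha<1$.

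Iterating gives $\rho_\beta(Q^n\delta_x,Q^n\delta_y)\leq \alpha^n(1+\beta V(x)+\beta V(y))$, hence $\|P_{nh}(x,\cdot)-P_{nh}(y,\cdot)\|_{\mathrm{TV}}\leq C\alpha^n(V(x)+V(y))$. Interpolating to arbitrary $t=nh+s$ with $s\in[0,h)$ via the non-expansiveness of Markov kernels on TV yields the first displayed bound with $\beta:=-h^{-1}\log\alpha$. For existence of $\pi$, the iterates $(P_{nh}(x,\cdot))_n$ form a $\rho_\beta$-Cauchy sequence and converge to a probability measure $\pi$ (tightness of the limit uses the compactness hypothesis in (a)), which is invariant for every $P_t$ by continuity; uniqueness follows directly from the contraction. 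Applying the discrete Lyapunov drift to truncations $V\wedge N$ under $\pi=Q\pi$ and letting $N\to\infty$ gives $\int V\,d\pi\leq K/(1-\gamma)<\infty$. The final bound then follows from
\[
\|P_t(x,\cdot)-\pi\|_{\mathrm{TV}}\leq \int\|P_t(x,\cdot)-P_t(y,\cdot)\|_{\mathrm{TV}}\,\pi(dy)
\]
combined with the two-point estimate.

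The main obstacle is the one-step contraction in $\rho_\beta$: balancing the two regimes so that they contract at a common rate requires a delicate joint tuning of $\beta$ and $R$, exploiting $V\geq 1$ to keep the weight bounded below. The compactness of sublevel sets in (a) is not used in the contraction itself, but enters when identifying the $\rho_\beta$-Cauchy limit as a Borel probability measure and verifying that $\pi$ places no mass at ``infinity''.
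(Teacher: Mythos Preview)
The paper does not supply its own proof of this theorem; it simply refers to \cite[Theorem 4.1]{H16} and, alternatively, to a combination of Corollary 2.8.3 and Theorem 3.2.3 in \cite{Kulik18}. Your sketch is precisely the Hairer--Mattingly weak-Harris contraction argument that underlies \cite{H16}, so your approach coincides with what the paper cites and is essentially correct. One small wording issue: the phrase ``simultaneous tuning of small $\beta$ and large $R$'' is misleading, since $R$ must be fixed \emph{before} invoking condition~(b) (which produces an $R$-dependent $h$ and $\delta$), and only $\beta$ is tuned afterwards --- but you already made the correct choice $R>4M/a$ at the outset, and your observation that $K/(1-\gamma)=M/a$ is independent of $h$ is exactly what makes this order of choices consistent.
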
 A proof of this Theorem is given in \cite[Theorem 4.1]{H16}.
Moreover, the same result can also be obtained from a combination
of Corollary 2.8.3 and Theorem 3.2.3 in \cite{Kulik18}, where also
additional comments and examples are given.


\section{Some properties of the cylindrical L\'evy process $(Z_{1},\dots,Z_{m})$}

Observe that the L\'evy process $Z=(Z_{1},\dots,Z_{m})$ has symbol
\[
\Psi_{Z}(\xi)=\int_{\R_{+}^{m}}\left(\mathrm{e}^{i\langle\xi,z\rangle}-1-\mathrm{i}\langle\xi,z\rangle\right)\mu(\mathrm{d}z)=\Psi_{\alpha_{1}}(\xi_{1})+\dots+\Psi_{\alpha_{m}}(\xi_{m}),
\]
where the L\'evy measure $\mu$ is given by
\[
\mu(\mathrm{d}z)=\sum_{k=1}^{m}\mu_{\alpha_{k}}(\mathrm{d}z_{k})\otimes\prod_{j\neq k}\delta_{0}(\mathrm{d}z_{j}).
\]
The next lemma is standard and follows from the scaling property $Z_{j}(t)=t^{1/\alpha_{j}}Z_{j}(1)$,
$j=1,\dots,m$, where equality holds in the sense of distributions.
\begin{Lemma}\label{LEMMA:01} $Z(t)$ has for each $t>0$ a smooth
density $f_{t}^{Z}$ on $\R^{m}$. Moreover, there exists a constant
$C>0$ such that
\[
\int_{\R^{m}}\left|\frac{\partial f_{t}^{Z}(z)}{\partial z_{j}}\right|\mathrm{d}z\leq Ct^{-1/\alpha_{j}},\qquad t>0.
\]
\end{Lemma}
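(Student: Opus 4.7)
My plan is to exploit the product structure of the density coming from the independence of the components $Z_1,\dots,Z_m$, reduce the estimate to a one-dimensional computation, and then invoke the scaling relation to push everything back to $t=1$.

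First I would note that since $Z_1,\dots,Z_m$ are independent one-dimensional $\alpha_j$-stable L\'evy processes, $Z(t)$ has, for each $t>0$, the product density
\[
 f_t^Z(z) = \prod_{k=1}^m f_t^{Z_k}(z_k), \qquad z=(z_1,\dots,z_m) \in \R^m,
\]
where $f_t^{Z_k}$ is the one-dimensional density of $Z_k(t)$. Existence and smoothness of $f_t^{Z_k}$ follows from Fourier inversion: the characteristic function of $Z_k(t)$ has modulus $\exp(- c_kt|\xi|^{\alpha_k})$ with $c_k>0$, so it (and all its polynomial multiples) lies in $L^1(\R)$, and $f_t^{Z_k}$ is in fact Schwartz-regular in every reasonable sense. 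Consequently $f_t^Z$ is smooth on $\R^m$.

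Next I would differentiate the product, noting that only the $j$-th factor contributes to $\partial_{z_j} f_t^Z$, and use Tonelli together with $\int_\R f_t^{Z_k}(z_k)\mathrm{d}z_k=1$ for $k\neq j$ to reduce the claim to the one-dimensional estimate
\[
 \int_\R \left| \frac{\mathrm{d}}{\mathrm{d}z} f_t^{Z_j}(z) \right| \mathrm{d}z \leq C t^{-1/\alpha_j}, \qquad t>0.
\]
At this point I would invoke the self-similarity $Z_j(t) \stackrel{d}{=} t^{1/\alpha_j}Z_j(1)$, which translates into the scaling identity
\[
 f_t^{Z_j}(z) = t^{-1/\alpha_j} f_1^{Z_j}\bigl(t^{-1/\alpha_j}z\bigr),
\]
so that $(\mathrm{d}/\mathrm{d}z)f_t^{Z_j}(z) = t^{-2/\alpha_j}(f_1^{Z_j})'(t^{-1/\alpha_j}z)$. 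The change of variables $u = t^{-1/\alpha_j}z$ then produces
\[
 \int_\R \left| \frac{\mathrm{d}}{\mathrm{d}z} f_t^{Z_j}(z) \right| \mathrm{d}z = t^{-1/\alpha_j} \int_\R |(f_1^{Z_j})'(u)| \mathrm{d}u,
\]
so everything reduces to finiteness of the right-hand integral, at $t=1$.

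The only real work is to show $(f_1^{Z_j})' \in L^1(\R)$. I would obtain this directly from Fourier analysis: writing
\[
 (f_1^{Z_j})'(u) = \frac{1}{2\pi}\int_\R (-\mathrm{i}\xi)\, \mathrm{e}^{-\mathrm{i}\xi u}\, \mathrm{e}^{-\Psi_{\alpha_j}(\xi)} \mathrm{d}\xi,
\]
and using $|\mathrm{e}^{-\Psi_{\alpha_j}(\xi)}| \leq \mathrm{e}^{-c_j |\xi|^{\alpha_j}}$ together with integration by parts twice in $\xi$, one obtains $|(f_1^{Z_j})'(u)| \leq C(1+|u|)^{-2}$, which is integrable. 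Alternatively one could appeal to the classical asymptotics of spectrally positive stable densities, which give $f_1^{Z_j}(u)$ and its derivatives decay like $|u|^{-1-\alpha_j}$ at infinity and are bounded near zero; either way the integrability is immediate and yields the lemma with $C=\max_j\int_\R|(f_1^{Z_j})'(u)|\mathrm{d}u$. No step here is really an obstacle; the only thing to be careful about is to confirm the decay of $(f_1^{Z_j})'$ cleanly, so I would prefer the Fourier-side integration-by-parts argument over appealing to tail asymptotics.
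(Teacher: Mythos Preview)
Your approach is correct and is exactly the route the paper indicates: the paper does not give a detailed proof but simply states that the lemma ``is standard and follows from the scaling property $Z_j(t)=t^{1/\alpha_j}Z_j(1)$,'' which is precisely what you carry out after reducing to one dimension via the product structure. One minor slip: with the paper's convention the characteristic function is $\mathrm{e}^{\Psi_{\alpha_j}(\xi)}$, not $\mathrm{e}^{-\Psi_{\alpha_j}(\xi)}$, but since $\Re\Psi_{\alpha_j}(\xi)=-c_j|\xi|^{\alpha_j}$ your decay estimate and the rest of the argument go through unchanged.
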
 Below we state some useful estimates on stochastic integrals
with respect to the L\'evy processes $Z_{1},\dots,Z_{m}$ due to \cite[Lemma A.2]{DF13}.
\begin{Lemma}\label{LEMMA:00} Let $0<\eta\leq\alpha_{j}<\gamma\leq2$.
Then there exists a constant $C=C(\eta,\gamma)>0$ such that, for
any predictable process $H(u)$ and $0\leq s\leq t\leq s+1$,
\[
\E\left[\left|\int_{s}^{t}H(u)dZ_{j}(u)\right|^{\eta}\right]\leq C(t-s)^{\eta/\alpha_{j}}\sup\limits _{u\in[s,t]}\E\left[|H(u)|^{\gamma}\right]^{\eta/\gamma}.
\]
\end{Lemma}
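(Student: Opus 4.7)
The plan is to derive the estimate from the stable analogue of the Burkholder--Davis--Gundy inequality, specifically the Gin\'e--Marcus-type bound
\[
\E\left[\left|\int_s^t H(u)\,dZ_j(u)\right|^\eta\right] \leq C\,\E\left[\left(\int_s^t|H(u)|^{\alpha_j}\,du\right)^{\eta/\alpha_j}\right],
\]
valid for any $\eta\in(0,\alpha_j)$, and then to convert to $\gamma$-moments of $H$ by Jensen's inequality. The right-hand side exhibits the correct scaling: for a deterministic constant $H\equiv h$ the left-hand side equals $|h|^\eta(t-s)^{\eta/\alpha_j}\E[|Z_j(1)|^\eta]$ and the right-hand side equals a constant times $|h|^\eta(t-s)^{\eta/\alpha_j}$, matching exactly. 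This is where finiteness of $\E[|Z_j(1)|^\eta]$ for $\eta<\alpha_j$ enters.

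For the proof of this stable BDG I would use a truncation of the L\'evy--It\^o representation $Z_j(t)=\int_0^t\int_{(0,\infty)}z\,\widetilde N(du,dz)$ at some level $M$, writing the stochastic integral as a sum $I_1+I_2$, where $I_1$ uses only jumps of size at most $M$ and $I_2$ those of size larger than $M$ (both in their compensated form, since $\alpha_j>1$). For $I_1$, apply the standard BDG inequality for purely discontinuous $L^2$-martingales to obtain $\E[|I_1|^\eta]\le C\,\E[[I_1]^{\eta/2}]$, then pull the exponent inside via Jensen's inequality (since $\eta/2<1$), noting that $\int_0^M z^2\,\mu_{\alpha_j}(dz)\asymp M^{2-\alpha_j}$. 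For $I_2$, split the drift part (bounded via $\int_M^\infty z\,\mu_{\alpha_j}(dz)\asymp M^{1-\alpha_j}$, finite since $\alpha_j>1$) from the pure-jump compound-Poisson part, which in turn is controlled by subadditivity $|x+y|^\eta\le|x|^\eta+|y|^\eta$ when $\eta\le1$ or by Rosenthal's inequality when $1<\eta\le\alpha_j$. Finally, optimize by choosing $M$ in terms of $\int_s^t|H(u)|^{\alpha_j}du$ pathwise so as to collapse both contributions into a factor proportional to this quantity raised to the power $\eta/\alpha_j$.

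Once the Gin\'e--Marcus inequality is established, the rest is bookkeeping. First, Jensen's inequality applied to the concave function $x\mapsto x^{\eta/\alpha_j}$ yields
\[
\E\left[\left(\int_s^t|H(u)|^{\alpha_j}\,du\right)^{\eta/\alpha_j}\right]\le\left(\int_s^t\E[|H(u)|^{\alpha_j}]\,du\right)^{\eta/\alpha_j}\le(t-s)^{\eta/\alpha_j}\sup_{u\in[s,t]}\E[|H(u)|^{\alpha_j}]^{\eta/\alpha_j}.
\]
Then the power-mean inequality $\E[|H|^{\alpha_j}]^{1/\alpha_j}\le\E[|H|^\gamma]^{1/\gamma}$, valid because $\alpha_j<\gamma$, replaces $\alpha_j$-moments by $\gamma$-moments and finishes the proof.

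The main obstacle is justifying the Gin\'e--Marcus inequality with the correct dependence on $M$, in particular the pathwise optimization of the truncation level in the presence of a random integrand $H$. For constant $H$ the estimate is immediate from the scaling of $Z_j$, and in the general case a stopping-time approximation by simple predictable processes combined with the triangle inequality in $L^\eta$ is the cleanest route. The case $\eta\le1$ is noticeably easier because of subadditivity, while $\eta\in(1,\alpha_j]$ requires Rosenthal-type moment bounds for the compound-Poisson part of $I_2$.
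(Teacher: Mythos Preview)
The paper does not give its own proof of this lemma; it simply cites \cite[Lemma A.2]{DF13} (Debussche--Fournier). Your approach is precisely the standard route taken there: the stable BDG inequality obtained by truncating the L\'evy--It\^o decomposition at a level $M$ and optimizing, followed by Jensen's inequality and the power-mean inequality to pass from $\alpha_j$-moments to $\gamma$-moments. One minor caveat: the endpoint $\eta=\alpha_j$ in the stated range is problematic (for constant $H\neq0$ the left-hand side is infinite since $\E[|Z_j(1)|^{\alpha_j}]=\infty$), and indeed your argument and the cited reference both work only for $\eta<\alpha_j$; this is harmless since every application in the paper uses $\eta<\alpha_{\min}\leq\alpha_j$ strictly.
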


\subsection*{Acknowledgements}
The authors would like to thank Alexei Kulik for some interesting discussions
on ergodicity of Markov processes and, in particular,
on the meaning and sufficiency of condition (b) in Theorem \ref{th:3}.

\bibliographystyle{amsplain}
\bibliography{references}

\end{document}